\newtheorem{theorem}{Theorem}[section]
\newtheorem{proposition}[theorem]{Proposition}
\newtheorem{lemma}[theorem]{Lemma}
\newtheorem{corollary}[theorem]{Corollary}
\theoremstyle{definition}
\newtheorem{definition}[theorem]{Definition}
\theoremstyle{remark}
\newtheorem{remark}[theorem]{Remark}
\newtheorem*{remark*}{Remark}
\newcommand{\R}{\mathbb{R}}
\DeclareMathOperator{\PV}{P.\! V.\! }
\renewcommand{\d}{\,\mathrm{d}}
\newcommand{\domain}{Q(\Omega)}
\newcommand{\energygen}{\mathcal{E}}
\newcommand{\calibgen}{\mathcal{C}}
\newcommand{\admissiblegen}{\mathcal{A}}
\newcommand{\opgen}{\mathcal{L}}
\newcommand{\energy}{\mathcal{E}_{\rm N}}
\newcommand{\calib}{\mathcal{C}_{\rm N}}
\newcommand{\lag}{G_{\rm N}}
\newcommand{\op}{\mathcal{L}_{\rm N}}
\newcommand{\neumann}{\mathcal{N}_{\rm N}}
\newcommand{\region}{\mathcal{G}}
\newcommand{\admissible}{\mathcal{A}_{\rm N}}
\newcommand{\energyloc}{\mathcal{E}_{\rm L}}
\newcommand{\calibloc}{\mathcal{C}_{\rm L}}
\newcommand{\lagloc}{G_{\rm L}}
\newcommand{\oploc}{\mathcal{L}_{\rm L}}
\newcommand{\neumannloc}{\mathcal{N}_{\rm L}}
\newcommand{\totalvar}{\mathcal{E}_{\rm NTV}}
\newcommand{\calibtotalvar}{\mathcal{C}_{\rm NTV}}
\newcommand{\calibperla}{\mathcal{C}_{\mathcal{P}_{\rm N}, \lambda}}
\newcommand{\energyfrac}{\mathcal{E}_{s, F}}
\newcommand{\gagliardo}{\mathcal{E}_s}
\newcommand{\gagliardoeps}{\mathcal{E}_{K_\varepsilon}}
\newcommand{\calibfrac}{\mathcal{C}_{s, F}}
\newcommand{\energytot}{\mathcal{E}_{\rm M}}
\newcommand{\calibtot}{\mathcal{C}_{\rm M}}
\newcommand{\optot}{\mathcal{L}_{\rm M}}
\newcommand{\perim}{\mathcal{P}_{\rm N}}
\newcommand{\calibper}{\mathcal{C}_{\mathcal{P}_{\rm N}}}
\newcommand{\lsn}{L^{1}_s(\R^n)}
\newcommand{\tmax}{T}
\numberwithin{equation}{section}
\title[Null-Lagrangians and calibrations for general nonlocal functionals]{Null-Lagrangians and calibrations for general nonlocal functionals and an application to the viscosity theory}
\author{Xavier Cabr\'e}
\address{X. Cabr\'e \textsuperscript{1,2,3}
	\newline
	\textsuperscript{1} ICREA, Pg. Lluis Companys 23, 08010 Barcelona, Spain
	\newline
	\textsuperscript{2} Universitat Polit\`ecnica de Catalunya, Departament de Matem\`{a}tiques and IMTech, 
	Diagonal 647, 08028 Barcelona, Spain
	\newline
	\textsuperscript{3} Centre de Recerca Matem\`{a}tica, Edifici C, Campus Bellaterra, 08193 Bellaterra, Spain}
\email{xavier.cabre@upc.edu}
\author{I\~{n}igo U. Erneta}
\address{I.~U. Erneta \textsuperscript{4}
	\newline
\textsuperscript{4} 
Rutgers University, Department of Mathematics, Hill Center - Busch Campus, 
110 Frelinghuysen Road, Piscataway, NJ 08854, USA}
\email{inigo.erneta@rutgers.edu}
\author{Juan-Carlos Felipe-Navarro} 
\address{J.C. Felipe-Navarro \textsuperscript{5}
\newline
\textsuperscript{5}
Universidad Complutense de Madrid, Instituto de Matem\'{a}tica Interdisciplinar and Departamento de An\'{a}lisis Matem\'{a}tico y Matem\'{a}tica Aplicada, Plaza de las Ciencias~2, 28040 Madrid, Spain}
\email{jcfelipe@ucm.es}
\thanks{
The three authors are supported by grants PID2021-123903NB-I00 and RED2018-102650-T funded by MCIN/AEI/10.13039/501100011033 and by ``ERDF A way of making Europe''.
The second author has received founding from the MINECO grant MDM-2014-0445-18-1.
The third author has been supported by the Academy of Finland and the European Research Council (ERC) under the European Union's Horizon 2020 research and innovation program (grant agreement No 818437), and is member of CADEDIF Research Group (UCM 920894).
This work is also supported by the Spanish State Research Agency, through the Severo Ochoa and Mar\'{i}a de Maeztu Program for Centers and Units of Excellence in R\&D (CEX2020-001084-M), as well as by the Catalan project 2021 SGR 00087}
\keywords{Field of extremals, nonlocal operators, calibration, null-Lagrangian, minimality, viscosity solutions}
\begin{document}

\begin{abstract}
In this article we build a null-Lagrangian and a calibration for general nonlocal elliptic functionals in the presence of a field of extremals.
Thus, our construction assumes the existence of a family of solutions to the Euler-Lagrange equation whose graphs produce a foliation. 
Then, as a consequence of the calibration, we show the minimality of each leaf in the foliation. 
Our model case is the energy functional for the fractional Laplacian, 
for which such a null-Lagrangian was recently discovered by us.

As a first application of our calibration, we show that monotone solutions to translation invariant nonlocal equations are minimizers.
Our second application is somehow surprising, since here ``minimality'' is assumed instead of being concluded. 
We will see that the foliation framework is broad enough to provide a proof which establishes that minimizers of nonlocal elliptic functionals are viscosity solutions.
\end{abstract}

\maketitle


\section{Introduction}

Null-Lagrangians and calibrations have played a prominent role in the Calculus of Variations,
since they provide sufficient conditions for the minimality of critical points.
Important examples are those calibrations constructed in the presence of a 
\emph{field of extremals}, i.e., a foliation by critical points.
These notions have their origin in the classical extremal field theory of Weierstrass and are a powerful tool to prove minimality of solutions to PDEs.
Especially, they have found many relevant applications in the context of minimal surfaces.

In our previous work~\cite{CabreErnetaFelipeNavarro-Calibration1} we initiated the study of calibrations for nonlocal problems. 
There, we treated the simplest nonlocal model: the energy functional for the fractional Laplacian (the Gagliardo-Sobolev seminorm). In the present paper, we extend the theory to a wide class of nonlocal functionals. 
Our main result is the construction of a calibration for the energy functional\footnote{Consistent with the notation in \cite{CabreErnetaFelipeNavarro-Calibration1}, the subindices N and L are used throughout the text to denote nonlocal and local objects, respectively.}
\[
\energy(w) := \dfrac{1}{2}\iint_{\domain} \lag(x, y, w(x), w(y)) \d x \d y,
\]
where, given a bounded domain $\Omega \subset \R^n$, we have written
\begin{equation} \label{defQOmega}
	\domain := 
	(\R^{n}\times \R^{n})\setminus(\Omega^c \times \Omega^c) =
	(\Omega \times \Omega)\cup
	(\Omega \times \Omega^c) \cup (\Omega^c \times \Omega).
\end{equation}
Here and throughout the paper $\Omega^c = \R^n \setminus \Omega$.
Since $\domain$ is invariant under the reflection 
$(x,y) \mapsto (y,x)$,
without loss of generality we may assume that the Lagrangian $\lag$ is \emph{pairwise symmetric}, i.e., it satisfies
\begin{equation}
\label{pairwise:symmetric}
\lag(x,y,a,b) = \lag(y,x,b,a),
\end{equation}
for all $(x,y) \in \domain $ and $(a,b) \in \R^2$.
We assume \eqref{pairwise:symmetric} throughout the paper.
The Lagrangian $\lag(x, y, a, b)$ 
is required to satisfy the natural ellipticity condition 
\begin{equation}
\label{Intro_lag:convex}
\partial^2_{a b} \lag(x, y, a, b)
\leq 0,
\end{equation}
on which we elaborate below (see the comments before Theorem~\ref{thm:nonlocal:calibration} and also Section~\ref{section:nonlocal}).

As in the local theory, as well as in our recent fractional Laplacian theory~\cite{CabreErnetaFelipeNavarro-Calibration1}, our calibration for $\energy$ is built in the presence of a field of extremals.
As mentioned above,
this is a one-parameter family of critical points of $\energy$ whose graphs form a foliation (see Definition~\ref{def:foliation}). 
For the construction, it suffices to have 
subsolutions and supersolutions
 on each respective side of a given extremal, a fact that is sometimes very useful.

A first application of our calibration concerns the minimality of monotone solutions to translation invariant nonlocal equations.
More precisely, we prove that
if $u$ is a solution 
(with an appropriate regularity and growth at infinity, which will depend on the Lagrangian $\lag$)
satisfying $\partial_{x_n } u > 0$ in $\R^{n}$,
then it is a minimizer
among functions~$w$ satisfying
\[ 
\lim_{\tau \to -\infty} u(x', \tau) \leq w(x', x_n) \leq \lim_{\tau \to +\infty} u(x', \tau),
\]
for $x = (x', x_n) \in \R^{n-1}\times \R$.
This result,
which is related to 
a celebrated conjecture of De Giorgi for the Allen-Cahn equation, 
was only known for those nonlocal functionals for which an existence and regularity theory of minimizers is available.
We explain
this further in Subsection~\ref{subsection:applications}.

As a second application, we show that minimizers of nonlocal elliptic functionals are viscosity solutions.
This type of result was previously known for problems where a weak comparison principle is available; see \cite{ServadeiValdinoci,KorvenpaaKuusiLindgren,BarriosMedina}.
However, we can prove it in more general scenarios by using the calibration technique; see Subsection~\ref{subsection:viscosity}.
This strategy was previously used by the first author~\cite{Cabre-Calibration} in the context of nonlocal minimal surfaces.

\subsection{Examples} \label{subsection:examples}

Our theory covers several important elliptic functionals $\energy$ given by a Lagrangian $\lag$ as above:
\begin{itemize}[leftmargin=*]
	\item The case
	\[ \lag(x, y, a, b) = \dfrac{|a-b|^p}{2p|x-y|^{n+ps}}, \]
	with $p \in [1, \infty)$ and $s \in (0,1)$,
	corresponds to the fractional $p$-Dirichlet Lagrangian, which gives rise to the fractional 
	\mbox{$p$-Laplace} equation.
	More generally, considering
	\begin{equation}
	\label{general:plaplace}
	\lag(x, y, a, b) = \frac{|a-b|^p}{2p|x-y|^{n+ps}} - \frac{1}{2|\Omega|}\mathds{1}_{\Omega \times \Omega}(x, y) 
	(F(a, x)+F(b, y)),
	\end{equation}
	we can add a reaction term in the Euler-Lagrange equation.
	For instance, if we take $p = 2$ we recover (up to a multiplicative constant) the Lagrangian associated to the fractional semilinear equation $(-\Delta)^s u = \partial_u F(u, x)$ in $\Omega$, treated in our previous work~\cite{CabreErnetaFelipeNavarro-Calibration1}. Recall the expression for the fractional Laplacian:
	\[
	(-\Delta)^s u (x) = c_{n,s} \, \PV \int_{\R^n} \frac{u(x) - u(y)}{|x- y|^{n+2s}}  \d y,
	\]
	where $c_{n,s}$ is a positive normalizing constant and $\PV$ stands for the principal value.
	
	\item The Lagrangian
	\[ \lag(x, y, a, b) = \frac{G\left(\frac{a-b}{|x-y|}\right)}{|x-y|^{n+s-1}},\]
	where $s \in (0,1)$, $G''(\tau) = (1+\tau^2)^{-(n+s+1)/2}$, and $G(0) = G'(0) = 0$,
	recovers the fractional perimeter for subgraphs; see~\cite{CozziLombardini}.
	
	\item 
	The general structure
	\[ \lag(x, y, a, b) = G(x-y, a - b), \]
	appears in the leading terms of the previous examples and gives rise to translation invariant equations.
	However, it is also of interest to treat functionals where the interactions occur only inside $\Omega$, that is, when $\lag$ is of the form  
	\[ \lag(x, y, a, b) = \mathds{1}_{\Omega\times \Omega}(x, y)\ G(x-y, a - b).\]
	These Lagrangians appear, for instance, in the macroelastic energy from Peridynamics; see~\cite{Silling-Peridynamics}.
	In this case, $G$ might be compactly supported in the $(x-y)$-variable.
	
	\item 
	The case 
	\[ 
	\lag(x, y, a, b) = -\mathds{1}_{\Omega \times \Omega}(x, y) K(x-y) \,a b + \frac{1}{2|\Omega|}
	\mathds{1}_{\Omega \times \Omega}(x, y)
	(F(a)+F(b)),
	\]
	corresponds to convolution-type operators.
	Functionals of this type appear in numerous problems, but most notably in the framework of constrained minimization (not treated in our setting); see, for instance,
	\cite{LionsMinimizationL1, BenilanBrezis, BakkerScheel, CarrilloCraigYao}
	where the first term is the interaction energy and the second one is the entropy.
	We may assume that $K$ is even, by making $\lag$ pairwise symmetric as described above.
	On the other hand, the ellipticity condition~\eqref{Intro_lag:convex} boils down to the nonnegativity of $K$.
	
\end{itemize}

\subsection{Calibrations and fields of extremals}

A fundamental problem in the Calculus of Variations is to find conditions for a function to be a minimizer of a given functional ---that we often call ``energy'', following PDE terminology.
More precisely, given a functional $\energygen\colon \admissiblegen \to \R$ 
defined on some set of admissible functions $\admissiblegen$, and given $u \in \admissiblegen$, 
one wishes to know whether $u$ minimizes $\energygen$ among competitors in~$\admissiblegen$ having the same Dirichlet condition as $u$. For nonlocal problems, given a bounded domain $\Omega$, the Dirichlet condition refers to the value of the function in all the exterior of $\Omega$, namely, in $\Omega^c = \R^n\setminus \Omega$.

One useful method to show the minimality of a given function $u\in \admissiblegen$ consists of constructing a calibration.
This is an auxiliary functional touching the energy $\energygen$ by below at $u$ and satisfying a null-Lagrangian 
equality or inequality.
More precisely:

\begin{definition}
	\label{def:calib}
	A functional $\calibgen\colon \admissiblegen \to \R$ is a \emph{null-Lagrangian} for the functional $\energygen$ and the 
admissible function
$u \in \admissiblegen$ if the following conditions hold:
	\begin{enumerate}[label= ($\mathcal{C}$\arabic*)]
		\item \label{def:calib:2} $\calibgen(u) = \energygen(u)$.
		\item \label{def:calib:3} $\calibgen(w) \leq \energygen(w)$ for all $w \in \admissiblegen$ with the same Dirichlet condition as $u$.
		\item \label{def:calib:1} $\calibgen(u) = \calibgen(w)$ for all $w \in \admissiblegen$ with the same Dirichlet condition as $u$.
	\end{enumerate}
As we will see, it is convenient to relax this last condition to the less stringent
	\begin{enumerate}[label= ($\mathcal{C}$\arabic*$'$)]
	\setcounter{enumi}{2}	
	\item \label{def:calib:1:prime} $\calibgen(u) \leq \calibgen(w)$ for all $w \in \admissiblegen$ with the same Dirichlet condition as $u$.	
	\end{enumerate}
We refer to a functional satisfying~\ref{def:calib:2},~\ref{def:calib:3}, and~\ref{def:calib:1:prime} as a \emph{calibration} for $\energygen$ and $u$.
	\end{definition}

Once a calibration is available, the minimality of $u$ among admissible functions with the same Dirichlet condition follows immediately.
For this, simply apply~\ref{def:calib:2},~\ref{def:calib:1:prime}, and~\ref{def:calib:3}, in this order.

Historically, motivated by classical problems in Mechanics and Geometry,
significant efforts 
have been 
put into rigorously understanding 
minimizers of
general functionals of the form
\begin{equation}
	\label{Intro_loc:energy}
	\energyloc(w) := \int_{\Omega} \lagloc \big(x, w(x), \nabla w(x)\big) \d x.
\end{equation}
It is well-known that 
every minimizer is a critical point of
$\energyloc$ (an \emph{extremal}) and must satisfy the associated 
Euler-Lagrange equation.
Conversely, if the Lagrangian $\lagloc(x, \lambda, q)$ is convex in the variables $(\lambda, q)$, then the functional $\energyloc$ is convex
and every critical point is a minimizer.
This convexity assumption is too restrictive for many relevant functionals, such as the Allen-Cahn energy.
For these functionals, the Dirichlet problem may admit several extremals, not all of them being minimizers.
Nevertheless,
one often has that the Lagrangian $\lagloc(x, \lambda, q)$ is convex with respect to the gradient variable $q$, which amounts to the \emph{ellipticity} of the problem.

To understand when an extremal is a minimizer, a systematic theory of calibrations has been developed for functionals $\energyloc$ of the form~\eqref{Intro_loc:energy}.
This is the \emph{extremal field theory}, which goes back to works of Weierstrass.
The key idea is to assume the existence of a family of critical points $u^t \colon \overline{\Omega} \to \R$,
with $t$ in some interval $I \subset \R$,
whose graphs do not intersect each other.
Thus, the graphs of these functions produce a foliation of a certain region $\region$ in $\R^n \times \R$, which allows to carry out a subtle convexity argument to bound the nonconvex functional by below with a calibration.

Next, we recall our definition of field for nonlocal problems, as introduced in \cite{CabreErnetaFelipeNavarro-Calibration1}:

\begin{definition}\label{def:foliation}
Given an interval~$I \subset \R$ (not necessarily bounded, nor open),
we say that a family $\{u^{t}\}_{t \in I}$ of functions $u^{t} \colon \R^n \to \R$ is a \emph{field in $\R^n$} if
\begin{itemize}
\item the function $(x, t) \mapsto u^t(x)$ is continuous in $\R^n \times I$;
\item for each $x \in \R^n$, the function $t \mapsto u^t(x)$ is $C^1$ and increasing in $I$. 
\end{itemize}
\end{definition}

Given a functional $\energygen$ acting on functions defined in $\R^n$, and given a bounded domain $\Omega\subset \R^n$, we say that $\{u^{t}\}_{t \in I}$ is a 
\emph{field of extremals}
in $\Omega$ (for $\energygen$) when it is a field in $\R^n$ and each of the functions $u^t$ is a critical point of $\energygen$ in $\Omega$.

Given a field in $\R^n$ as above, the region
\[
\region = \{ (x, \lambda) \in \R^n \times \R \colon \lambda = u^t(x) \text{ for some } t \in I \} \subset \R^n \times \R,
\]
is foliated by the graphs of the functions $u^t$, which do not intersect each other (since $u^t(x)$ is increasing in $t$). 
In particular, we can uniquely define a \emph{leaf-parameter function}
\begin{equation}
\label{Eq:DefLeafParameter}
t \colon \mathcal{G} \to I, \quad (x, \lambda) \mapsto t(x, \lambda) \quad \text{ determined by } \quad u^{t(x,\lambda)}(x) = \lambda.
\end{equation}
The function $t$ is continuous in $\region$ by the assumptions in Definition~\ref{def:foliation}.\footnote{We only sketch the argument. Assuming that $(x_n, \lambda_n) \in \mathcal{G}$ converges to $(\bar{x}, \bar{\lambda}) \in \mathcal{G}$, we prove that every accumulation point of the sequence $t_n := t(x_n, \lambda_n) \in I$ must be equal to $\bar{t} := t(\bar{x}, \bar{\lambda}) \in I$. Suppose there is a subsequence $(t_{n_{m}})_{m}$ converging to $t^{\star} \notin I$, where $t^{\star}$ could be infinite for unbounded~$I$. Taking a further subsequence, we may assume it is monotone (say) increasing, $t_{n_{m}} \uparrow t^{\star}$, the decreasing case being analogous. By the monotonicity and continuity of the field, $u^{\bar{t}}(x) < u^{\bar{t} + \varepsilon}(x) \leq u^{t_{n_{m}}}(x)$ for $\varepsilon > 0$ small, $m$ large, and $x$ close to $\bar{x}$. Letting $x = x_{n_{m}}$ and taking $m \to \infty$, by continuity and recalling that $u^{t_{n_{m}}}(x_{n_{m}}) \to u^{\bar{t}}(\bar{x})$ we conclude that $u^{\bar{t}}(\bar{x}) < u^{\bar{t}+\varepsilon}(\bar{x}) \leq u^{\bar{t}}(\bar{x})$, a contradiction. Hence no subsequence escapes $I$. If a subsequence converges to $t^{\star} \in I$, then by continuity $u^{t^{\star}}(\bar{x}) = u^{\bar{t}}(\bar{x})$ and by monotonicity $t^{\star} = \bar{t}$, hence the claim.}
We will often refer to the functions $u^t$ (or their graphs) as the ``{\it leaves}'' of the field.

Next, let us recall the fundamental result of the classical extremal field theory.
Namely, given an elliptic Lagrangian\footnote{Recall that here ellipticity means that $\lagloc(x, \lambda, q)$ is convex with respect to the gradient variable~$q$. However, for~\eqref{Intro_calib:loc} to be a calibration, a weaker condition than convexity in $q$ suffices. One needs to assume that each $u^t$ satisfies the \emph{Weierstrass sufficient condition}, namely $$ \lagloc(x,u^t(x),q) \geq \lagloc(x,u^t(x),\nabla u^t(x)) + \partial_q \lagloc(x,u^t(x),\nabla u^t(x)) \cdot \left(q-\nabla u^t(x) \right), $$ for all $x\in \Omega$, $q\in \R^n$, and $t\in I$; see~\cite{CabreErnetaFelipeNavarro-Calibration1} for more details.}~$\lagloc$ and $\{u^t\}_{t \in I}$ a smooth field of extremals  in $\Omega$, 
the functional
\begin{equation}\label{Intro_calib:loc}
	\begin{split}
		\calibloc(w) := & \int_{\Omega} \!\Big\{ \partial_q \lagloc(x, u^{t}(x), \nabla u^{t}(x)) \cdot \big(\nabla w(x) -\nabla u^{t}(x)\big)\Big\}  \Big|_{t = t(x, w(x))} \d x\\
		& \hspace{1.5cm} + \int_{\Omega} \lagloc(x, u^{t}(x), \nabla u^{t}(x))\big|_{t = t(x, w(x))} \d x,
	\end{split}
\end{equation}
is a calibration for the functional $\energyloc$ and each critical point $u^{t_0}$, $t_0\in I$.
In particular, each leaf $u^{t_0}$ 
minimizes $\energyloc$
among competitors $w$ satisfying $w = u^{t_0}$ on $\partial \Omega$
and whose graphs lie in the region $\region$.

While trying to find an analogue of \eqref{Intro_calib:loc} for the fractional Laplacian, and inspired by the work \cite{Cabre-Calibration} of the first author on the fractional perimeter,
in~\cite[Theorem~3.1]{CabreErnetaFelipeNavarro-Calibration1} we found 
a new expression for the calibration $\calibloc$.
For each $t_0 \in I$, we proved that~$\calibloc$ in~\eqref{Intro_calib:loc} can be written as
\begin{equation}\label{Intro_calib:loc:2}
	\begin{split}
		\calibloc(w) &= \int_{\Omega} \int_{u^{t_0}(x)}^{w(x)} \oploc(u^{t})(x)\Big|_{t = t(x, \lambda)} \d \lambda \d x  \\
		&\quad \quad \quad \quad  + \int_{\partial \Omega} \int_{u^{t_0}(x)}^{w(x)} \neumannloc(u^{t})(x)\big|_{t = t(x,\lambda)} \d \lambda \d \mathcal{H}^{n-1}(x) + \energyloc(u^{t_0}),
	\end{split}
\end{equation}
where $\oploc$ and $\neumannloc$ are, respectively, the Euler-Lagrange and Neumann operators associated to the functional $\energyloc$ in \eqref{Intro_loc:energy}.
As in the fractional Laplacian framework treated in~\cite{CabreErnetaFelipeNavarro-Calibration1}, our new nonlocal calibration given in Theorem~\ref{thm:nonlocal:calibration} below will be a nonlocal analogue of identity \eqref{Intro_calib:loc:2}.

While the theory of calibrations for local equations is well understood, 
there are very few papers prior to~\cite{CabreErnetaFelipeNavarro-Calibration1} dealing with nonlocal ones, which we mention next.
In~\cite{Cabre-Calibration} the first author gave a calibration for the fractional perimeter.
Independently, Pagliari~\cite{Pagliari} investigated the abstract structure of calibrations for the fractional total variation.
This last functional involves the fractional perimeter of each sublevel set of a given function.
The author succeeded in constructing a calibration to prove that the characteristic functions of halfspaces are minimizers, but other fields of extremals are not mentioned in that work. Our present work provides, as a particular case, a calibration for the fractional total variation in the presence of a general field of extremals. Moreover, we can relate our construction with the calibration for the fractional perimeter in~\cite{Cabre-Calibration} applied to each superlevel set; see Appendix~\ref{section:total:variation}. 
	
In our previous paper~\cite{CabreErnetaFelipeNavarro-Calibration1}, we constructed a calibration for the energy associated to semilinear equations involving the fractional Laplacian, that is, for energies of the form
\[
\energyfrac(w) = \dfrac{c_{n,s}}{4}\iint_{\domain} \dfrac{|w(x)-w(y)|^2}{|x-y|^{n+2s}} \d x \d y - \int_{\Omega} F(w(x)) \d x.
\]
Indeed, given $\{u^t\}_{t \in I}$ a field of extremals in $\Omega$, we showed that
\begin{align*}
	\calibfrac(w) & =  c_{n,s} \PV \iint_{\domain} \int_{u^{t_0}(x)}^{w(x)} \dfrac{u^t(x)-u^t(y)}{|x-y|^{n+2s}}\bigg|_{t = t(x, \lambda)} \d \lambda \d x \d y- \int_{\Omega} F(w(x)) \d x\\
	& \hspace{3cm} \quad\,  + \dfrac{c_{n,s}}{4}\iint_{\domain} \dfrac{|u^{t_0}(x)-u^{t_0}(y)|^2}{|x-y|^{n+2s}} \d x \d y,
\end{align*}
is a calibration for $\energyfrac$ and $u^{t_0}$, $t_0\in I$. 
We recall that the expression of $\calibfrac$ was obtained by replacing the operators~$\oploc$ and~$\neumannloc$ appearing in~\eqref{Intro_calib:loc:2} by their nonlocal counterparts.

\subsection{Main result} \label{subsection:MainResults}
Next, we present our main result, which builds a calibration for the functional 
\begin{equation}\label{Intro_gen:energy}
	\energy(w) = \dfrac{1}{2}\iint_{\domain} \lag(x, y, w(x), w(y)) \d x \d y,
\end{equation}
when the Lagrangian $\lag(x, y, a, b)$, which (without loss of generality) is assumed to be pairwise symmetric (in the sense of~\eqref{pairwise:symmetric} above),
satisfies the ellipticity condition~\eqref{Intro_lag:convex}, i.e.,
\[
	\partial^2_{a b} \lag(x, y, a, b) 
	\leq 0.
\]

We will see that~\eqref{Intro_lag:convex} guarantees
the ellipticity of the problem (or a strong comparison principle, see Appendix~\ref{section:comparison}). 
It will also ensure that the calibrating functional defined in Theorem~\ref{thm:nonlocal:calibration} below satisfies property~\ref{def:calib:3} in Definition~\ref{def:calib}, thus mirroring the effect of ellipticity in the local case.

As in the classical theory,
in this general nonlocal framework every extremal is a minimizer whenever the functional $\energy$ is convex.
A sufficient condition to guarantee the convexity of $\energy$ is 
the Lagrangian
$(a, b) \mapsto \lag(x,y, a, b)$ 
being convex.
However, contrary to the local case, this hypothesis does not guarantee the ellipticity assumption~\eqref{Intro_lag:convex}.\footnote{\label{Footnote:Convex} Ellipticity reads as $\partial^2_{ab} \lag \leq 0$, while convexity amounts to the conditions $\partial^2_{a a} \lag \geq 0$ and $\partial^2_{a a} \lag \partial^2_{b b} \lag \geq (\partial^2_{a b} \lag )^2$. For the simple quadratic example $\lag = \mathds{1}_{\Omega \times \Omega}(x,y) (a+b)^2$, the reader can easily check that the functional is convex but not elliptic. Another example corresponding to a more interesting equation is given by the Lagrangian $\lag = \frac{1}{2}K(x-y) (a-b)^2 + \frac{1}{2\varepsilon |\Omega|}\mathds{1}_{\Omega \times \Omega}(x,y) (a+b)^2$, with $\varepsilon > 0$ small, where $K$ is the singular kernel of the fractional Laplacian. Here, the Euler-Lagrange equation is $(-\Delta)^s u(x) + \frac{1}{\varepsilon} \left(u(x) + \frac{1}{|\Omega|}\int_{\Omega} u(y) d y\right) = 0$ for $x$ in $\Omega$.}
This seems to be due to the great generality of the nonlocal Lagrangian in~\eqref{Intro_gen:energy}.
In this direction, in most examples that we have in mind, the Lagrangian has a leading term of the form $\lag(x, y, a - b)$. In this case, ellipticity is equivalent to convexity in the $(a- b)$-variable (and both reduce to $\partial^2_{(a-b),(a-b)}\lag > 0$).

Now, by adding ``lower order terms'' to $\lag(x,y,a-b)$ (such as reaction terms) we may produce nonlocal elliptic functionals which are not convex.
For instance, consider the linear equation 
$(-\Delta)^s u = \lambda u$, with $\lambda \in \R$.
This equation admits an energy functional
\[
\frac{c_{n,s}}{4}\iint_{\domain} \frac{|u(x)-u(y)|^2}{|x-y|^{n+2s}} \d x \d y - \frac{\lambda}{2} \int_{\Omega} u(x)^2 \d x,
\]
which corresponds to a Lagrangian $\lag$ as in our example~\eqref{general:plaplace}.
It is elliptic in the sense of \eqref{Intro_lag:convex} for all $\lambda \in \R$, but not convex when $\lambda$ is large enough.
Notice that the equation satisfies the strong comparison principle, 
while the availability of the weak comparison principle depends on $\lambda$.\footnote{We say that an operator $\opgen$ satisfies the \emph{strong comparison principle} if, given two functions $u$ and~$v$ satisfying $\opgen u \leq \opgen v$ in $\Omega$, $u \leq v$ in $\R^n$, and touching somewhere in $\Omega$, then $u \equiv v$ in $\R^n$. By contrast, $\opgen$ satisfies the \emph{weak comparison principle} if, given two functions $u$ and~$v$ satisfying $\opgen u \leq \opgen v$ in $\Omega$ and $u \leq v$ in $\Omega^c$, then $u \leq v$ in $\Omega$.}

For the functional $\energy$ and its associated Euler-Lagrange operator 
\begin{equation}
\label{def:opnonlocal}
\op(w)(x) := \int_{\R^n} \partial_{a} \lag(x, y, w(x), w(y)) \d y,
\end{equation}
to be well defined for $x\in \Omega$,
one needs to make growth and regularity assumptions on the Lagrangian $\lag$. 
These determine the class of admissible functions; see \cite{Bucur} for some examples of natural assumptions.
In this respect, our main result (Theorem~\ref{thm:nonlocal:calibration} below), which gives a calibration for general nonlocal Lagrangians satisfying the ellipticity hypothesis~\eqref{Intro_lag:convex}, 
is a formal result since it does not specify the precise class of admissible functions. 
In other words, the great generality of the functionals does not allow for specifying the growth and regularity assumptions on $\lag$ and on the admissible functions $w$. 
Thus, the theorem cannot take into account integrability issues.\footnote{This is in contrast with Theorem~1.3 in~\cite{CabreErnetaFelipeNavarro-Calibration1},
where we gave a fully rigorous result for the fractional Laplacian.}
However, we could give completely rigorous results for some specific families of Lagrangians, adapting the admissible class of functions to the concrete problem.
Indeed, within the proof of the next theorem, there are only a few points that must be justified, namely, the interchange of certain integrals and the convergence of some expressions.
Hence, in the following statement we use the term ``sufficiently regular for $\lag$'' in the sense that those functions make all integrals to be well defined.

Recall \eqref{defQOmega} for the meaning of $Q(\Omega)$, Definition~\ref{def:foliation} for the notion of field, and~\eqref{Eq:DefLeafParameter} for the leaf-parameter function $t$.
The properties~\ref{def:calib:2},~\ref{def:calib:3}, \ref{def:calib:1}, and~\ref{def:calib:1:prime} have been introduced in Definition~\ref{def:calib}.

\begin{theorem}
\label{thm:nonlocal:calibration}
Let $I \subset \R$ be an interval 
and let $\Omega \subset \R^n$ be a bounded domain.
Given a  function $\lag = \lag(x, y, a, b)$,
with $\lag(x,y,a,b) = \lag(y,x,b,a)$,
satisfying the ellipticity condition~\eqref{Intro_lag:convex},
let  $\{u^t\}_{t \in I}$ be 
a field in $\R^n$ (in the sense of Definition \ref{def:foliation})
which is sufficiently regular for $\lag$.

Given $t_0 \in I$, let $\energy$ be defined by \eqref{Intro_gen:energy} and $\calib$ be  the functional
\[
\calib(w) := \iint_{\domain}\! \int_{u^{t_0}(x)}^{w(x)} 
\partial_{a} \lag (x, y, u^{t}(x), u^{t}(y)) \big|_{t = t(x, \lambda)} \d \lambda \d x \d y  + \energy(u^{t_0})
\]
defined in a set $\admissible$ of sufficiently regular admissible functions $w \colon \R^n \to \R$ (for $\lag$) which satisfy ${\rm graph }\,  w \subset \region$, where
\[
\region = \big\{(x, \lambda) \in \R^n \times \R \colon \lambda = u^t(x) \quad \text{ for some } t \in I \big\}.
\]

Taking $\calibgen = \calib$ and $\energygen = \energy$ in Definition~\ref{def:calib},
we have the following:
	\begin{enumerate}[label={\rm(\alph*)}]
		\item $\calib$ satisfies~{\rm\ref{def:calib:2}} and~{\rm\ref{def:calib:3}} with $u = u^{t_0}$.	
		\item Assume in addition that the family $\{u^t\}_{t \in I}$ satisfies
		\[ 
		\begin{split}
			\op (u^{t})\geq 0 \quad \text{ in } \Omega 
			\quad
			\text{ for } t \geq t_0,
			\\
			\op (u^{t}) \leq 0 \quad \text{ in } \Omega \quad \text{ for } t \leq t_0,
		\end{split}
		\]	
		where $\op$ is the Euler-Lagrange operator associated to $\energy$ given by \eqref{def:opnonlocal}. 
		Then, $\calib$ satisfies {\rm\ref{def:calib:1:prime}} with $u = u^{t_0}$.
		In particular,  $\calib$ is a calibration for $\energy$ and $u^{t_0}$, and hence $u^{t_0}$ minimizes $\energy$ among functions $w$ in $\admissible$ such that $w \equiv u^{t_0}$ in $\Omega^c$.
		
		\item Assume in addition that $\{u^t\}_{t \in I}$ is a field of extremals in $\Omega$, that is,
		a field in~$\R^n$ satisfying
		\[ 
		\begin{split}
			\op (u^{t})= 0 \quad \text{ in } \Omega 
			\quad
			\text{ for all } t \in I.
		\end{split}
		\]
		Then, the functional $\calib$ satisfies {\rm\ref{def:calib:1}} with $u = u^{t_0}$.
		Therefore, $\calib$ is a null-Lagrangian for $\energy$ and $u^{t_0}$.	
		As a consequence, for every $t \in I$, the extremal~$u^t$ minimizes $\energy$ among functions $w$ in $\admissible$ such that $w \equiv u^{t}$ in $\Omega^c$.
	\end{enumerate}
\end{theorem}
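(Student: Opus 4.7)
The plan is to verify conditions~\ref{def:calib:2}, \ref{def:calib:3}, \ref{def:calib:1:prime}, and \ref{def:calib:1} of Definition~\ref{def:calib} in turn. Property~\ref{def:calib:2} is immediate: setting $w=u^{t_0}$ in the definition of $\calib$ collapses the inner $\lambda$-integral to zero, so $\calib(u^{t_0})=\energy(u^{t_0})$.

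For property~\ref{def:calib:3} in part (a), I would first use the pairwise symmetry of $\lag$ together with the invariance of $\domain$ under $(x,y)\mapsto (y,x)$ to symmetrize
\[
\calib(w)-\energy(u^{t_0})=\frac{1}{2}\iint_{\domain}\!\Big\{\int_{u^{t_0}(x)}^{w(x)}\partial_a\lag\big|_{t=t(x,\lambda)}\,\d\lambda+\int_{u^{t_0}(y)}^{w(y)}\partial_b\lag\big|_{t=t(y,\mu)}\,\d\mu\Big\}\,\d x\,\d y,
\]
where the arguments of $\lag$ are $(x,y,u^t(x),u^t(y))$. For each fixed $(x,y)\in \domain$, set $a_t:=u^t(x)$, $b_t:=u^t(y)$, $\Phi(a,b):=\lag(x,y,a,b)$, and choose $t_x,t_y\in I$ with $a_{t_x}=w(x)$, $b_{t_y}=w(y)$; the substitutions $\lambda=a_t$ and $\mu=b_t$ (valid since $\partial_t u^t>0$) recast the pointwise claim as
\[
\int_{t_0}^{t_x}\partial_a\Phi(a_t,b_t)\,\dot a_t\,\d t+\int_{t_0}^{t_y}\partial_b\Phi(a_t,b_t)\,\dot b_t\,\d t\leq \Phi(a_{t_x},b_{t_y})-\Phi(a_{t_0},b_{t_0}).
\]
Assuming without loss of generality $t_x\leq t_y$, the portion of both integrals up to $t_x$ telescopes to $\Phi(a_{t_x},b_{t_x})-\Phi(a_{t_0},b_{t_0})$ (a null-Lagrangian identity along a single leaf), so it remains to show
\[
\int_{t_x}^{t_y}\partial_b\Phi(a_t,b_t)\,\dot b_t\,\d t\leq\int_{t_x}^{t_y}\partial_b\Phi(a_{t_x},b_t)\,\dot b_t\,\d t=\Phi(a_{t_x},b_{t_y})-\Phi(a_{t_x},b_{t_x}).
\]
This holds because $a_t\geq a_{t_x}$ and $\dot b_t\geq 0$ on $[t_x,t_y]$ by the monotonicity in Definition~\ref{def:foliation}, while the ellipticity~\eqref{Intro_lag:convex} renders $\partial_b\Phi(\cdot,b_t)$ nonincreasing in its first argument. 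Integration over $\domain$ then delivers~\ref{def:calib:3}.

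For part (b), under the Dirichlet condition $w\equiv u^{t_0}$ in $\Omega^c$ the contribution to $\calib(w)-\calib(u^{t_0})$ from $\{x\in \Omega^c\}$ vanishes; interchanging the $y$ and $\lambda$ integrals (justified by the sufficient regularity of the field for $\lag$), the inner $y$-integral collapses via~\eqref{def:opnonlocal} to
\[
\calib(w)-\calib(u^{t_0})=\int_{\Omega}\int_{u^{t_0}(x)}^{w(x)}\op(u^t)(x)\big|_{t=t(x,\lambda)}\,\d\lambda\,\d x.
\]
As $\lambda$ traverses from $u^{t_0}(x)$ to $w(x)$, the leaf parameter $t(x,\lambda)$ traverses from $t_0$ to $t(x,w(x))$, so the supersolution/subsolution hypotheses on $u^t$ match the sign of the increment $w(x)-u^{t_0}(x)$ and force the oriented integrand to be nonnegative in either case. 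This yields~\ref{def:calib:1:prime} and the minimality of $u^{t_0}$.

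For part (c), assuming $\op(u^t)\equiv 0$ for all $t\in I$, the identity above (adapted to the exterior data of any fixed leaf $u^{t_1}$, where the contribution from $\Omega^c\times \Omega$ is a constant depending only on $u^{t_1}|_{\Omega^c}$) shows that $\calib$ is constant among admissible competitors sharing the Dirichlet condition of $u^{t_1}$. Moreover, applying the telescoping argument of part~(a) to $w=u^{t_1}$ forces $t_x=t_y=t_1$ at every $(x,y)$, so no defect arises and one obtains the exact identity $\calib(u^{t_1})=\energy(u^{t_1})$. Hence $\calib$ is a null-Lagrangian for $\energy$ and every $u^{t_1}$, and the standard calibration argument (\ref{def:calib:2}, \ref{def:calib:1}, then \ref{def:calib:3}) yields minimality of every leaf. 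The main obstacle is the defect estimate in part (a) when $(w(x),w(y))$ does not lie on a single leaf; this is precisely where the pairwise ellipticity~\eqref{Intro_lag:convex} is indispensable, in combination with the monotonicity of the field along~$t$.
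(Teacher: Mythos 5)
Your proposal is correct and follows essentially the same route as the paper: your symmetrization, change of variables $\lambda=u^t(x)$, telescoping of the full $t$-derivative up to $t_x=t(x,w(x))$, and the comparison of $\partial_b\lag(x,y,u^t(x),u^t(y))$ with $\partial_b\lag(x,y,w(x),u^t(y))$ on $[t_x,t_y]$ via ellipticity and monotonicity is exactly the content of Lemma~\ref{lemma:alt:expression} and Proposition~\ref{property:3}, while your treatment of (b) and (c) through the splitting $\domain=(\Omega\times\R^n)\cup(\Omega^c\times\Omega)$ and the representation \eqref{gen:calib:sym} matches Propositions~\ref{property:1} and~\ref{property:2}. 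The only (harmless) difference is organizational: you prove the pointwise inequality directly and keep the fixed base $t_0$ when calibrating the other leaves, whereas the paper isolates the alternative expression as a lemma and obtains the minimality of every leaf by letting $t_0$ vary.
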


As mentioned above, the class of functionals $\energy$ of the form~\eqref{Intro_gen:energy} 
satisfying the ellipticity condition~\eqref{Intro_lag:convex}
includes the Gagliardo-Sobolev seminorm (for which we constructed a calibration in~\cite{CabreErnetaFelipeNavarro-Calibration1}), the fractional total variation (see Appendix~\ref{section:total:variation}), and the examples in Subsection~\ref{subsection:examples}.
Our calibration in Theorem~\ref{thm:nonlocal:calibration} is a generalization of the one in~\cite{CabreErnetaFelipeNavarro-Calibration1}.
To guess the expression of~$\calib$ above, we extrapolated our new identity~\eqref{Intro_calib:loc:2} in the local theory.
The key point is that each of the terms in \eqref{Intro_calib:loc:2} has a clear nonlocal counterpart; see~\eqref{gen:calib:sym} below.

An interesting feature of the calibrations considered in this paper is their stability under the addition of functionals.
Due to their special structure, calibrations given in terms of fields can be added together to obtain new ones. 
In particular, the local theory can be combined with the nonlocal one developed in this work to produce calibrations for energies involving both local and nonlocal interaction terms.
We explain this further in~Section~\ref{section:compound}.

\subsection{An application to monotone solutions}
\label{subsection:applications}

Our interest in fields of extremals came from the study of \emph{monotone solutions} to the fractional Allen-Cahn equation
\begin{equation}
\label{eq:allen:cahn}
(-\Delta)^s u = u - u^3 \quad \text{ in } \R^{n};
\end{equation}
see~\cite{CabreSireII,CabreSolaMorales}, as well as~\cite{CozziPassalacqua} for more general integro-differential operators.
When the operator is the classical Laplacian, these solutions are related to a celebrated conjecture of De Giorgi; see~\cite{CabrePoggesi}.

In \cite[Corollary~1.4]{CabreErnetaFelipeNavarro-Calibration1}, we proved that monotone solutions of \eqref{eq:allen:cahn} are minimizers among competitors taking values in a precise region of space (the region specified in the next corollary).
Thanks to Theorem~\ref{thm:nonlocal:calibration} of the current paper, the same proof allows to establish the minimality of monotone solutions to more general nonlocal translation invariant equations. 
More precisely, given a Lagrangian 
of the form $\lag = \lag(x - y, a, b)$, with associated energy functional $\energy$ defined by~\eqref{Intro_gen:energy}, the Euler-Lagrange operator~$\op$ given by~\eqref{def:opnonlocal} 
is translation invariant, that is,
for all $x$ and $z$ in $\R^{n}$ the identity
\[ 
\op(w)(x + z) =  \op(w(\cdot + z))(x)
\]
holds.
We then have the following:

\begin{corollary}
\label{cor:applications}
Let $\lag = \lag(x - y, a, b)$ be a  function satisfying $\lag(x-y,a,b) = \lag(y-x,b,a)$ and the ellipticity condition~\eqref{Intro_lag:convex}.
Let $u$ be a sufficiently regular solution for $\lag$ 
(see the comments before Theorem~\ref{thm:nonlocal:calibration}) of $\op(u) = 0$ in $\R^{n}$, with $\op$ as in~\eqref{def:opnonlocal}.
Assume that $u$ is increasing in the $x_n$-variable, i.e.,
\begin{equation}\label{monotone}
\partial_{x_n} u > 0 \quad \text{ in } \R^n.
\end{equation}

Then, for each bounded domain $\Omega \subset \R^{n}$, $u$ is a minimizer of $\energy$ among sufficiently regular admissible functions $w$ satisfying 
\[
\lim_{\tau \to - \infty} u(x', \tau) \leq w(x', x_n) \leq \lim_{\tau \to +\infty} u(x', \tau) \quad \text{ for all } x = (x',x_n) \in \Omega,
\]
and such that $w \equiv u$ in $\Omega^c$.
\end{corollary}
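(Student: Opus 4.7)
The plan is to construct an explicit field of extremals containing $u$ as one of its leaves, and then to invoke Theorem~\ref{thm:nonlocal:calibration}(c). The candidate field is the natural one for a translation invariant problem with a monotone solution, namely
\[ u^t(x', x_n) := u(x', x_n + t), \qquad t \in \R.\]
First I would check the two requirements of Definition~\ref{def:foliation}: continuity of $(x, t) \mapsto u^t(x)$ is inherited from the continuity of $u$, while the hypothesis $\partial_{x_n} u > 0$ in~\eqref{monotone} ensures that $t \mapsto u^t(x)$ is $C^1$ and strictly increasing, so $\{u^t\}_{t \in \R}$ is a field in $\R^n$. Moreover, since $\lag$ depends on $x$ and $y$ only through the difference $x-y$, the Euler--Lagrange operator $\op$ in~\eqref{def:opnonlocal} is translation invariant, and hence $\op(u^t)(x) = \op(u)(x', x_n + t) = 0$ for every $x \in \R^n$ and $t \in \R$. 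Thus $\{u^t\}_{t \in \R}$ is a field of extremals in every bounded domain $\Omega$, with $u = u^0$.

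Next I would identify the region $\region$ and apply the calibration. Since $t \mapsto u^t(x)$ is continuous and strictly increasing on $\R$, its image is the open interval
\[ \bpar{\lim_{\tau \to -\infty} u(x', \tau),\ \lim_{\tau \to +\infty} u(x', \tau)}, \]
so $\region$ consists precisely of points $(x, \lambda)$ satisfying these two \emph{strict} inequalities. Taking $t_0 = 0$, Theorem~\ref{thm:nonlocal:calibration}(c) yields the conclusion $\energy(u) \leq \energy(w)$ for every sufficiently regular admissible $w$ with $\mathrm{graph}\, w \subset \region$ and $w \equiv u$ in $\Omega^c$, i.e., for competitors satisfying the strict version of the bounds in the corollary.

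To move from strict to non-strict inequalities, I would approximate by truncation against the leaves. Given a competitor $w$ with $\lim_{\tau \to -\infty} u(x', \tau) \leq w(x', x_n) \leq \lim_{\tau \to +\infty} u(x', \tau)$ and $w \equiv u$ on $\Omega^c$, set
\[ w_R(x) := \max \bpar{ u^{-R}(x), \min \bpar{ w(x), u^R(x) } }, \qquad R > 0.\]
By construction $u^{-R} \leq w_R \leq u^R$ pointwise, so $\mathrm{graph}\, w_R \subset \region$; the inequalities $u^{-R} \leq u \leq u^R$ on $\Omega^c$ give $w_R \equiv u$ on $\Omega^c$; and $w_R \to w$ pointwise as $R \to \infty$, by the very definition of the limits bounding $w$. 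Theorem~\ref{thm:nonlocal:calibration}(c) applied to $w_R$ yields $\energy(u) \leq \energy(w_R)$, and sending $R \to \infty$ produces the desired inequality. I expect the main obstacle to be precisely this final limit, since it requires continuity (or at least lower semicontinuity) of $\energy$ along the truncation; this depends on the unspecified growth assumptions on $\lag$ and on the precise choice of $\admissible$. In the spirit of the formal statement of Theorem~\ref{thm:nonlocal:calibration}, this convergence should be incorporated into the ``sufficiently regular for $\lag$'' proviso, and for any concrete Lagrangian it reduces to a routine dominated- or monotone-convergence argument.
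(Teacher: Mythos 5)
Your proposal follows the same main route as the paper: build the translational field $u^t(x) := u(x', x_n + t)$, use translation invariance and $\partial_{x_n} u > 0$ to verify it is a field of extremals, and apply Theorem~\ref{thm:nonlocal:calibration}(c) to get minimality against competitors whose graphs lie in the open region $\region$. The only genuine divergence is the final relaxation from strict to non-strict bounds. You truncate against the leaves, setting $w_R := \max\bigl(u^{-R}, \min(w, u^R)\bigr)$, and send $R \to \infty$; the paper instead takes the convex combination $(1-\varepsilon)w + \varepsilon u$ and sends $\varepsilon \to 0$. Both are valid, but the paper's variant is a bit cleaner: the convex combination lies strictly inside $\region$ automatically (since $u$ does), inherits the regularity of $w$ and $u$ with no extra argument, and typically renders $\varepsilon \mapsto \energy((1-\varepsilon)w + \varepsilon u)$ continuous (or even polynomial in $\varepsilon$ for quadratic energies), whereas your $\max$/$\min$ truncation produces functions that are generally only Lipschitz across the contact sets $\{w = u^{\pm R}\}$, so ``$w_R$ sufficiently regular for $\lag$'' is a nontrivial extra proviso and the passage $\energy(w_R) \to \energy(w)$ can be a little more delicate. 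You correctly flag this as the point requiring additional hypotheses; the paper's linear interpolation simply sidesteps it.
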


This minimality result was already known for reaction equations involving the fractional Laplacian. 
For such equations, it can be proven with an alternative argument (described in the Introduction of~\cite{CabreErnetaFelipeNavarro-Calibration1}) which does not use any calibration. 
This alternative proof requires an existence and regularity theorem for minimizers,
as explained in Appendix~\ref{section:comparison}.
However, such a result is not available for many general Lagrangians of the form $\lag(x-y, a, b)$.
For these functionals, 
Corollary~\ref{cor:applications} 
allows to establish the minimality of monotone solutions for the first time. 

Notice that, given a monotone solution, the translation invariance of the equation is all what is needed in order to produce a field of extremals (by sliding the solution in the $x_n$-variable).
Therefore, Corollary~\ref{cor:applications} also holds 
for translation invariant equations involving both local and nonlocal terms; 
see Section~\ref{section:compound}.

\subsection{An application to the viscosity theory}
\label{subsection:viscosity}

Here we are interested in conditions to ensure that minimizers, or more generally weak solutions, are viscosity solutions.
Weak and viscosity solutions are different notions of solutions, both for differential and for nonlocal equations.
Within the Calculus of Variations, it is natural to work with \emph{weak solutions} belonging to the energy space.
Instead, when dealing with fully nonlinear equations, it is more suitable to work with \emph{viscosity solutions}.
Here, the equation is transferred to act on smooth functions touching the extremal from one side.

In the local framework, it has been shown that minimizers of many relevant functionals are viscosity solutions.
For the $p$-Laplace equation $- \Delta_p u = 0$ (here every weak solution is a minimizer), Juutinen, Lindqvist, and Manfredi~\cite{JuutinenLindqvistManfredi}
obtained the result by using a weak comparison principle.
This principle allows to compare the minimizer with a function touching it by below and which is later slid upwards, 
forcing the equation to have the correct sign.
For local functionals of the form~\eqref{Intro_loc:energy}, assuming convexity (a stronger condition than ellipticity), Barron and Jensen~\cite{BarronJensen} found a simpler variational argument. We comment on their strategy at the end of the present subsection as well as in Remark~\ref{remark:barron}.
Showing that non-minimizing weak solutions are viscosity solutions has also been treated in the literature.
For instance, this has been done by Medina and Ochoa~\cite{MedinaOchoa} for semilinear equations driven by the $p$-Laplacian. Their proof again uses a comparison principle.

Concerning nonlocal problems, the first results in this direction appeared in the geometric setting.
Caffarelli, Roquejoffre, and Savin~\cite{CaffarelliRoquejoffreSavin} showed that minimizers to the nonlocal perimeter are viscosity solutions of the homogeneous nonlocal mean curvature equation.
Their proof is quite involved and uses a comparison principle.
Later, Cabr\'{e}~\cite{Cabre-Calibration} was able to show the same result via a simpler calibration argument (here we will give the analogue of this result in the functional setting\footnote{We use ``functional setting'' in contrast to the geometric setting of energies related to the fractional perimeter.}).
The case of nonlocal minimal graphs has also been treated by Cozzi and Lombardini~\cite{CozziLombardini}. In the functional setting, as far as we know,
the first nonlocal result appeared in the work of Servadei and Valdinoci~\cite{ServadeiValdinoci} for linear equations involving the fractional Laplacian.
There, the authors employ a regularization by convolution that is not available for other operators.
For equations driven by the fractional $p$-Laplacian, 
we mention the paper by Korvenp\"{a}\"{a}, Kuusi, and Lindgren~\cite{KorvenpaaKuusiLindgren} where they treat the homogeneous problem, and the work by Barrios and Medina~\cite{BarriosMedina} for the semilinear one. In both cases, a comparison principle is needed.

Next, we state the main result of this subsection. 
We will show that every minimizer of our elliptic nonlocal functionals is a viscosity solution.
In contrast with most of the previous works, the novelty of our result is that we do not need a weak comparison principle, allowing us to treat a bigger class of Lagrangians.
This is achieved by a calibration argument.
In a way, the information provided by the weak comparison principle 
follows from the properties \ref{def:calib:2}-\ref{def:calib:1} satisfied by the calibration.
Recall that, as explained at the beginning of Subsection~\ref{subsection:MainResults}, the weak comparison principle does not follow from ellipticity.
However, the ellipticity of the Lagrangian (condition \eqref{Intro_lag:convex} above) suffices for the calibration argument in our proof.

Our theorem applies to general nonlocal elliptic functionals of the form~\eqref{Intro_gen:energy}. 
Since we do not make any growth and regularity assumptions on the Lagrangian~$\lag$, 
as in the main theorem above, 
our result is only formal. 
Nevertheless, again, we could give a fully rigorous statement for specific families of Lagrangians. 
In fact, this is what we do in the first part of Section~\ref{section:viscosity} for semilinear equations involving the fractional Laplacian.

\begin{theorem}
\label{thm:viscosity}
Let $\lag = \lag(x,y,a,b)$ be 
a function with $\lag(x,y,a,b) = \lag(y,x,b,a)$
satisfying 
the ellipticity condition~\eqref{Intro_lag:convex},
and let $\Omega \subset \R^n$ be a bounded domain.
Let $u$ be a sufficiently regular minimizer of the functional~$\energy$ given by~\eqref{Intro_gen:energy}.

Then, the function $u$ is a viscosity solution of the associated Euler-Lagrange equation $\op(u) = 0$ in $\Omega$.
\end{theorem}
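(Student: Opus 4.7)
I propose a proof by contradiction that exploits Theorem~\ref{thm:nonlocal:calibration}(a) together with the ellipticity condition~\eqref{Intro_lag:convex}. Suppose $u$ minimizes $\energy$ on $\Omega$ but fails, say, to be a viscosity subsolution of $\op u = 0$ at some $x_0 \in \Omega$ (the supersolution case is entirely analogous). Then there exists a smooth test function $\psi$ with $\psi \geq u$ in a ball $B_r(x_0) \subset \Omega$ and $\psi(x_0) = u(x_0)$ such that its standard nonlocal extension
\[
\bar\psi(x) := \begin{cases} \psi(x), & x \in B_r(x_0), \\ u(x), & x \in \R^n \setminus B_r(x_0), \end{cases}
\]
satisfies $\op(\bar\psi)(x_0) > 0$ (with the convention that subsolutions satisfy $\op u \leq 0$). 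Since $\bar\psi \geq u$ and $\bar\psi \equiv u$ on $\Omega^c$, the function $\bar\psi$ is an admissible competitor for $u$, and minimality gives $\energy(u) \leq \energy(\bar\psi)$.

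The key construction is an interpolation field having $u$ and $\bar\psi$ as its endpoint leaves, for instance $u^t := u + t(\bar\psi - u + \varepsilon)$ with $t \in [0, 1]$ and $\varepsilon > 0$ an auxiliary parameter that ensures the strict monotonicity in $t$ required by Definition~\ref{def:foliation} (the limit $\varepsilon \to 0$ is taken at the end). I would apply Theorem~\ref{thm:nonlocal:calibration}(a) to this field twice: first with distinguished leaf at $t_0 = 0$, producing a calibrating functional $\calib$ with $\calib(u) = \energy(u)$ and $\calib(u^1) \leq \energy(u^1)$ by properties~\ref{def:calib:2} and~\ref{def:calib:3}; then with distinguished leaf at $t_0 = 1$, producing another calibrating functional $\calib'$ with $\calib'(u^1) = \energy(u^1)$ and $\calib'(u) \leq \energy(u)$. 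Evaluating the explicit formulas for $\calib$ and $\calib'$ at the opposite leaf, performing the change of variables $\lambda = u(x) + t(\bar\psi(x) - u(x) + \varepsilon)$, and passing $\varepsilon \to 0$, the two one-sided inequalities combine into the sharp identity
\[
\energy(\bar\psi) - \energy(u) = \int_0^1 \int_\Omega \big(\bar\psi(x) - u(x)\big)\, \op\big(u + t(\bar\psi - u)\big)(x) \, dx \, dt.
\]
This identity is the variational analogue that plays the role of the weak comparison principle used in earlier works.

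The main obstacle is then to extract a contradiction from this identity, the minimality bound $\energy(\bar\psi) \geq \energy(u)$, and the strict sign $\op(\bar\psi)(x_0) > 0$. Ellipticity is crucial: because $\partial^2_{ab}\lag \leq 0$, the map $b \mapsto \partial_a \lag(x, y, u(x), b)$ is nonincreasing, and since $u + t(\bar\psi - u) \geq u$ with equality on $\R^n \setminus B_r(x_0)$, one deduces
\[
\op\big(u + t(\bar\psi - u)\big)(x) \leq \op(u)(x) = 0 \quad \text{for all } x \in \Omega \setminus B_r(x_0),\ t \in [0,1].
\]
Thus the integrand in the identity is supported in $B_r(x_0) \times [0, 1]$ and, by continuity, is strictly positive in a neighborhood of $(x_0, 1)$. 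The remaining — and technically hardest — step is to refine the choice of $\psi$ (shrinking $r$ so that $\op(\bar\psi) \geq \delta > 0$ throughout a smaller ball, which is possible by continuity) and then produce an explicit admissible competitor $w$ with $\energy(w) < \energy(u)$, contradicting minimality. The natural candidate is a downward perturbation of $\bar\psi$ inside the calibrating field, matched to the exterior data of $u$; certifying the strict energy decrease via the identity above is where the full strength of both the calibration and the ellipticity condition is exploited, and this is presumably carried out rigorously in Section~\ref{section:viscosity} for the specific families of Lagrangians treated there, notably the fractional Laplacian.
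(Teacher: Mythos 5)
Your central step does not produce a contradiction, so the proof does not close. The identity you derive,
\[
\energy(\overline{\psi}) - \energy(u) \;=\; \int_0^1\!\!\int_\Omega \big(\overline{\psi}(x)-u(x)\big)\,\op\big(u+t(\overline{\psi}-u)\big)(x)\,\d x\,\d t,
\]
is (formally) correct, but it is nothing more than the fundamental theorem of calculus applied to $t\mapsto \energy(u+t(\overline{\psi}-u))$, and it points in the wrong direction. You assume $\overline{\psi}\geq u$ touches $u$ from above with $\op(\overline{\psi})(x_0)>0$, and minimality gives $\energy(\overline{\psi})\geq\energy(u)$; a right-hand side that is \emph{positive} near $(x_0,1)$ is perfectly consistent with both facts, so there is nothing to contradict. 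To violate minimality one must exhibit a competitor with strictly \emph{smaller} energy, and it must lie on the \emph{opposite} side of $u$ from the test function: when $\psi$ touches from above and $\op(\overline{\psi})(x_0)>0$, the competitor is obtained by sliding $\psi$ \emph{downwards} and truncating with $u$ (equivalently, the paper treats only the supersolution case, touching from below with $\op(\overline{\varphi})(x_0)<0$, slides \emph{upwards}, takes $\varphi^t=\max\{u,\varphi+t\}$, and handles your case by replacing $u$ with $-u$). Two ingredients that your interpolation field $u^t=u+t(\overline{\psi}-u+\varepsilon)$ cannot supply are then essential: (i) a quantitative estimate showing that the slid-and-truncated leaves remain \emph{strict} sub/supersolutions with a uniform margin $c_0$ on the set where they differ from $u$ (for the fractional Laplacian this is the computation bounding $\fraclaplacian_\varepsilon\varphi^t$ by $\fraclaplacian_\varepsilon\overline{\varphi}+C\,\tmax\,\delta^{-2s}$), which gives the strictly negative bulk term in the energy comparison of Theorem~\ref{thm:energy:comparison} and hence $\energy(\varphi^{\tmax})<\energy(u)$; and (ii) the extension of the calibration machinery to \emph{weak} fields (Definition~\ref{def:visc:field}), since the truncated leaves coincide with $u$ on large sets and are only $C^{1,1}$ by below, so the strict fields of Definition~\ref{def:foliation} do not apply. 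Your final paragraph defers exactly this competitor construction (``produce an explicit admissible competitor $w$ with $\energy(w)<\energy(u)$''), but that construction \emph{is} the proof; it is not a refinement left to Section~\ref{section:viscosity}.

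Two further problems with the intermediate steps: you invoke $\op(u)(x)=0$ pointwise for $x\in\Omega\setminus B_r(x_0)$, but the minimizer is only assumed regular enough for the energy and the calibration integrals to make sense, not for $\op(u)$ to be defined classically (if it were, the theorem would follow at once from the monotonicity of $\partial_a\lag$ in $b$, as in the comparison argument of Appendix~\ref{section:comparison}, and no calibration would be needed); in any case that estimate is vacuous for your identity, since $\overline{\psi}-u\equiv 0$ outside $B_r(x_0)$. Likewise, the $\varepsilon$-shift making the field strictly increasing changes the exterior datum of the top leaf ($u^1=\overline{\psi}+\varepsilon\neq u$ in $\Omega^c$), so the properties \ref{def:calib:2}--\ref{def:calib:3} ``with the same Dirichlet condition'' are not directly applicable before the limit; this is a fixable technicality, but it signals that the strictly monotone interpolation field is not the right object here --- the paper's weak fields are designed precisely to avoid it while preserving the one-sided operator bounds that drive the contradiction.
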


Later in Section~\ref{section:viscosity}, we will give a more precise statement of this result, showing that minimizers by above (or by below) are viscosity supersolutions (respectively, subsolutions). 
Furthermore,
while our theorem only applies to minimizers,
we will explain how it can be used
to prove that certain non-minimizing weak solutions are viscosity solutions. 
Here, the idea will be to ``freeze'' the lower order terms; see Remark~\ref{remark:trick}.

The proof of Theorem~\ref{thm:viscosity} is based on the following energy comparison result for ordered functions embedded in a \emph{weak field} (that is, a ``degenerate field'' where the leaves are still ordered, but may touch each other; see Figure~\ref{Fig:Dibujo_viscosity} and Definition~\ref{def:visc:field}). Thus, here we will need to extend the above theory of nonlocal calibrations to the more general setting of weak fields.

\begin{theorem}
\label{thm:energy:comparison}
Let $\lag = \lag(x,y,a,b)$ be a 
function with $\lag(x,y,a,b) = \lag(y,x,b,a)$
satisfying the ellipticity condition~\eqref{Intro_lag:convex}.
Given a bounded domain $\Omega \subset \R^n$, let $u$ 
belong to $C(\overline{\Omega})$.

Assume that there exists a weak field $\{u^{t}\}_{t \in [0,\tmax]}$ for $u$ 
in $\Omega$ (in the sense of Definition~\ref{def:visc:field}) which is sufficiently regular for $\lag$ 
(see the comments at the beginning of Subsection~\ref{subsection:mixed}).
	
Then, 
if $\energy(u) < \infty$, we have
\[
\energy(u^T)\leq \energy(u) + \int_{\Omega} \int_{u(x)}^{u^T(x)} \op\,(u^{t}(x)) \big|_{t = t(x, \lambda)} \d \lambda \d x.
\]
\end{theorem}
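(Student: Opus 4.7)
The plan is to view Theorem~\ref{thm:energy:comparison} as a consequence of the calibration construction in Theorem~\ref{thm:nonlocal:calibration}(a), extended to the setting of a weak field, with the ``base leaf'' taken to be the upper function $v$ rather than $u$. Concretely, I would define
\[
\calib_{v}(w) := \iint_{\domain} \int_{v(x)}^{w(x)} \partial_{a}\lag\bigl(x,y,\varphi^{t}(x),\varphi^{t}(y)\bigr)\big|_{t=t(x,\lambda)} \,\d\lambda\,\d x\,\d y + \energy(v),
\]
where $t(x,\lambda)$ is a measurable selection of the (possibly multi-valued) leaf-parameter function determined by the weak field, and then evaluate this functional at $w=u$.

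The first step---and the main work---is to extend the inequality $\calib_{v}(w)\leq\energy(w)$ (i.e., property~\ref{def:calib:3} of Definition~\ref{def:calib}) to the weak-field setting. Exactly as in the proof of Theorem~\ref{thm:nonlocal:calibration}(a), this reduces, via the pairwise symmetry of $\lag$, to a pointwise convexity-type inequality on $\domain$ which depends only on the ellipticity condition $\partial^{2}_{ab}\lag\leq 0$. The crucial observation is that this pointwise inequality does not demand strict ordering of the leaves: for fixed $x$, the set of values $\lambda\in[\varphi^{0}(x),\varphi^{T}(x)]$ for which $t(x,\lambda)$ is not uniquely determined corresponds to plateaus of the nondecreasing function $t\mapsto\varphi^{t}(x)$ and therefore has zero Lebesgue measure, so the choice of measurable selection is irrelevant to the $\lambda$-integration. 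I expect the main obstacle to lie precisely in this extension, i.e., in handling the degeneracy of the weak field carefully (measurability at touching points, existence of the measurable selection $t(x,\lambda)$, and integrability of $\partial_{a}\lag$ along the weak field under the ``sufficient regularity'' hypothesis).

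Once $\calib_{v}(u)\leq\energy(u)$ is in hand, the conclusion follows by unfolding. Since $u\equiv v$ on $\Omega^{c}$, the inner integral $\int_{v(x)}^{u(x)}(\cdots)\,\d\lambda$ vanishes for $x\in\Omega^{c}$, so the outer integration may be restricted to $x\in\Omega$; and since $u\leq v$, reversing the endpoints flips the sign. Finally, a Fubini interchange (justified by the regularity hypothesis) combined with the definition~\eqref{def:opnonlocal} of the Euler--Lagrange operator yields
\[
\iint_{\domain}\!\int_{u(x)}^{v(x)} \!\partial_{a}\lag\bigl(x,y,\varphi^{t}(x),\varphi^{t}(y)\bigr)\big|_{t=t(x,\lambda)}\,\d\lambda\,\d x\,\d y = \int_{\Omega}\!\int_{u(x)}^{v(x)} \!\op\bigl(\varphi^{t}\bigr)(x)\big|_{t=t(x,\lambda)}\,\d\lambda\,\d x,
\]
since for every $x\in\Omega$ the slice $\{y\in\R^{n}:(x,y)\in\domain\}$ is all of $\R^{n}$. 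Rearranging the resulting inequality produces the desired bound.
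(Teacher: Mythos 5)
Your proposal is correct and follows essentially the same route as the paper: the proof of Theorem~\ref{thm:energy:comparison_combined} likewise takes the calibration anchored at the upper function $v$ over the weak field, extends property~\ref{def:calib:3} (via the ellipticity argument of Proposition~\ref{property:3}) to this degenerate setting, evaluates at $w=u$, and rearranges using $u\equiv v$ in $\Omega^c$. Your extra care with the leaf-parameter selection is fine but largely unnecessary, since Definition~\ref{def:visc:field} already makes $t(x,\lambda)$ unique for $u(x)<\lambda<v(x)$.
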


In Section~\ref{section:viscosity}, we will prove 
analogous results to Theorems~\ref{thm:viscosity} and~\ref{thm:energy:comparison}
in the fractional semilinear setting, giving in this case fully rigorous statements under precise regularity assumptions (see Theorems~\ref{thm:viscosity:frac} and~\ref{thm:energy:comparison:frac} respectively).
Furthermore, the same proof will allow us to prove 
Theorems~\ref{thm:viscosity} and~\ref{thm:energy:comparison}
in the more general setting of ``mixed'' functionals involving both local and nonlocal terms (see Theorems~\ref{thm:gen:visc:min} and~\ref{thm:energy:comparison_combined} below).

The energy inequality in Theorem~\ref{thm:energy:comparison} is new even in the local case.
We will prove it by means of the calibration arguments developed in our previous work~\cite{CabreErnetaFelipeNavarro-Calibration1},
similarly to how we established identity (3.9) in that paper.

Once the energy inequality is available, we can prove Theorem~\ref{thm:viscosity}.
Indeed, assume that $u$ is a minimizer and that a smooth function $\varphi$ touches $u$ by below at some contact point. 
Now, we slide $\varphi$ upwards and take the maxima with $u$ to obtain a weak field.
The energy comparison with $u^{\tmax} = \max\{u,\varphi + T\}$ will show that $\varphi$ must be a supersolution at the contact point, since otherwise $u$ would not be a minimizer.
Applying the same procedure to smooth functions touching $u$ by above, we will conclude that 
$u$ is a viscosity solution.

\begin{figure}
	\centering
	\definecolor{caac8ff}{RGB}{200,0,70}

\begin{tikzpicture}[y=0.80pt, x=0.80pt, yscale=-4.000000, xscale=4.000000, inner sep=0pt, outer sep=0pt]
	\path[scale=0.265,fill=caac8ff,line cap=round,miter limit=4.00,fill
	opacity=0.186,line width=0.054pt] (488.8782,206.2692) .. controls
	(483.9369,206.2378) and (477.4633,208.9727) .. (474.0150,212.4207) .. controls
	(464.6469,221.2231) and (456.8338,230.2743) .. (444.9096,235.5660) .. controls
	(438.5223,238.4415) and (430.8243,239.7592) .. (424.4297,236.5213) .. controls
	(415.5771,232.2875) and (408.4700,223.9944) .. (400.3429,218.6734) .. controls
	(397.2127,217.3116) and (393.6112,220.7186) .. (391.8904,222.9753) .. controls
	(386.4238,230.6900) and (383.7765,239.8891) .. (380.7426,248.7527) .. controls
	(372.6370,274.2468) and (367.2243,300.9051) .. (362.1241,327.1361) .. controls
	(359.7296,339.8338) and (357.0404,353.6358) .. (355.0671,366.3889) .. controls
	(359.4129,365.6930) and (366.0153,363.4775) .. (370.2115,362.0694) .. controls
	(380.1319,358.6109) and (388.2399,353.8666) .. (397.1302,348.3229) .. controls
	(405.7811,343.6945) and (415.4394,341.5759) .. (424.9569,339.5126) .. controls
	(441.9340,336.2244) and (458.0452,334.5846) .. (476.2285,335.2913) .. controls
	(482.6954,335.9016) and (492.1579,338.2281) .. (495.6672,342.0974) .. controls
	(496.2249,341.1513) and (496.2851,339.5206) .. (496.7413,339.0095) .. controls
	(501.8788,327.1982) and (512.1696,318.1703) .. (522.6425,311.1358) .. controls
	(535.1321,302.8343) and (549.0282,296.8598) .. (562.4884,290.3822) .. controls
	(561.0117,283.1263) and (558.0319,275.4768) .. (554.7628,268.8505) .. controls
	(546.4734,252.6228) and (535.8817,237.5713) .. (522.8185,224.8472) .. controls
	(514.8928,217.4354) and (506.0071,209.8376) .. (495.2859,207.1548) .. controls
	(493.2800,206.6731) and (490.9337,206.2166) .. (488.8782,206.2693) -- cycle;
	
	\path[draw=red,line join=miter,line cap=butt,line width=0.8pt]
	(100.1411,94.9037) .. controls (100.1411,94.9037) and (104.6883,70.6944) ..
	(112.5461,71.7898) .. controls (120.4038,72.8852) and (125.4509,55.7896) ..
	(130.6956,64.8736) .. controls (135.2162,72.7035) and (138.3711,74.7726) ..
	(144.9470,78.5692);
	
	\path[draw=red,line join=miter,line cap=butt,line width=0.8pt]
	(107.1020,91.2681) .. controls (107.1020,91.2681) and (108.7626,81.5237) ..
	(114.4569,75.9993) .. controls (120.9844,69.6665) and (135.5056,71.5455) ..
	(140.8269,80.7622);
	
	\path[draw=red,line join=miter,line cap=butt,line width=0.8pt]
	(114.1827,89.4517) .. controls (114.1827,89.4517) and (116.2868,82.8638) ..
	(121.0075,80.6511) .. controls (126.7943,77.9387) and (125.9321,82.8166) ..
	(128.6964,82.8811) .. controls (131.7583,82.9524) and (134.7481,80.0324) ..
	(136.6051,83.4297);

	
	\path[draw=black,line join=miter,line cap=butt,miter limit=4.00,line
	width=0.056pt] (71.4942,101.2791) -- (193.6364,101.2791);
	
	\path[draw=black,line join=miter,line cap=butt,miter limit=4.00,line
	width=0.056pt] (93.9448,96.9404) -- (93.9448,120) -- (151.9423,120)
	-- (151.9423,75.3170);
	
	\path[draw=red,line join=miter,line cap=butt,line width=0.8pt]
	(93.9407,97.0112) .. controls (93.9407,97.0112) and (100.5835,54.0001) ..
	(106.2679,58.1193) .. controls (111.9523,62.2386) and (114.2348,67.6007) ..
	(125.4735,56.2620) .. controls (132.7794,48.8910) and (147.3463,68.1121) ..
	(148.8240,76.8561);
	
	\path[draw=black,line join=miter,line cap=butt,line width=1.8pt]
	(159.9981,63.4349) .. controls (156.5569,66.5078) and (157.3679,72.8556) ..
	(151.3462,75.5871) .. controls (142.3414,79.6719) and (133.1844,83.8538) ..
	(131.1248,90.4665) .. controls (128.0258,86.7793) and (109.2976,89.3569) ..
	(104.6672,92.3901) .. controls (99.9274,95.4950) and (95.3903,96.8029) ..
	(91.9212,97.3186) .. controls (91.9212,90.3047) and (89.9271,90.4555) ..
	(89.4973,92.9579) .. controls (89.3758,93.6648) and (89.0732,95.2381) ..
	(88.5244,94.4959) .. controls (88.2395,94.1106) and (88.4424,96.8555) ..
	(87.9284,95.4205) .. controls (87.7311,94.8696) and (87.4184,98.1083) ..
	(86.9705,96.5945) .. controls (86.8662,96.2422) and (86.4776,97.8210) ..
	(86.1931,96.8947) .. controls (85.8561,95.7976) and (85.6340,95.8674) ..
	(85.2338,96.6210) .. controls (84.5358,97.9354) and (85.1018,94.9014) ..
	(84.5715,95.5451) .. controls (84.4070,95.7447) and (83.8303,97.5830) ..
	(83.8303,96.1581) .. controls (83.8303,94.5506) and (83.2753,95.7046) ..
	(83.1034,95.8924) .. controls (82.2536,96.8209) and (82.3373,95.8421) ..
	(82.2412,95.5566) .. controls (82.0225,94.9062) and (81.7361,94.6762) ..
	(81.2581,95.1476) .. controls (80.9634,95.4383) and (80.0282,96.8440) ..
	(80.1942,94.6752) .. controls (80.3092,93.1727) and (79.5025,93.6054) ..
	(79.1235,94.2277) .. controls (78.3129,95.5591) and (78.1818,94.6400) ..
	(78.0828,93.6394);
	
	\path[draw=black,line join=miter,line cap=butt,miter limit=4.00,line
	width=1.8pt] (178.8626,45.8508) .. controls (177.1041,47.0027) and
	(177.4448,62.4066) .. (173.7150,48.6158) .. controls (169.4022,53.3465) and
	(169.8278,57.4468) .. (168.9150,68.3650) .. controls (164.8353,63.0101) and
	(164.7708,48.6982) .. (163.4849,46.8036) .. controls (160.6125,48.1372) and
	(159.6949,50.9420) .. (159.6949,50.9420);
	
	\path[draw=black,line join=miter,line cap=butt,miter limit=4.00,line
	width=0.056pt] (107.1083,91.1806) -- (107.1083,109) --
	(141.0254,109) -- (140.8269,80.7622);

	\path[fill=black,line width=0.056pt] (113.9271,60.9272) node[above right]
	(text7396) {\color{red} $u^{\tmax}$};
	
	\path[fill=black,line width=0.056pt] (117.2770,95.4030) node[above right]
	(text7400) {$u=u^{0}$};
	
	\path[fill=black,line width=0.056pt] (127.0770,80.4030) node[above right]
	(text7400) {\color{red} $u^{t_1}$};
	
	\path[fill=black,line width=0.056pt] (124.0770,71.4030) node[above right]
	(text7400) {\color{red} $u^{t_2}$};
	
	\path[fill=black,line width=0.056pt] (106.0770,71.4030) node[above right]
	(text7400) {\color{red} $u^{t_3}$};
	
	\path[fill=black,line width=0.056pt] (122.0770,125.4030) node[above right]
	(text7400) {$\Omega$};
	
	\path[fill=black,line width=0.056pt] (122.0770,113.0030) node[above right]
	(text7400) {$\Omega_{t_2}$};
	
	\path[draw=black,dash pattern=on 0.45pt off 1.45pt,line join=miter,line
	cap=butt,miter limit=4.00,line width=0.56pt] (159.6949,50.9420) --
	(159.9981,63.4349);
\end{tikzpicture}
	\caption{Example of a weak field for a functions $u$ in a domain $\Omega$.}
	\label{Fig:Dibujo_viscosity}
\end{figure}


Finally, 
for the smaller class of convex Lagrangians,
a simple variational proof of our result can be given without using the calibration argument.
It will be explained in Remark~\ref{remark:barron}.
Here, in addition to the ellipticity condition~\eqref{Intro_lag:convex},
one needs to assume that the function $(a,b) \mapsto \lag(x,y,a,b)$ is convex;
in this regard see the first comments in Subsection~\ref{subsection:MainResults} and footnote~\ref{Footnote:Convex}.
The proof is a nonlocal counterpart of the one by Barron and Jensen~\cite{BarronJensen}.

\subsection{Outline of the article}
Section~\ref{section:nonlocal} contains 
the proofs of Theorem~\ref{thm:nonlocal:calibration} and Corollary~\ref{cor:applications}.
In Section~\ref{section:compound} we explain how to combine the local and nonlocal theory to obtain calibrations for mixed energy functionals.
In Section~\ref{section:viscosity} we apply the calibration formalism to the theory of viscosity solutions.
First, we prove the fully rigorous results for the fractional Laplacian (Theorems~\ref{thm:energy:comparison:frac} and \ref{thm:viscosity:frac}) with all details in regularity and integrability issues. 
Then, we show Theorems~\ref{thm:viscosity} and~\ref{thm:energy:comparison} (contained, respectively, in the more general 
Theorems~\ref{thm:gen:visc:min} and~\ref{thm:energy:comparison_combined}).

Appendix~\ref{section:comparison} 
contains an alternative proof, via a strong comparison principle, of the minimality for functions embedded in a field of extremals, under the additional assumption that an existence and regularity theorem for minimizers holds.
Finally, in Appendix~\ref{section:total:variation} we apply our calibration to the nonlocal total variation functional,
relating it with the calibration for the nonlocal perimeter constructed by the first author in \cite{Cabre-Calibration}.

\section{The calibration for general nonlocal functionals}
\label{section:nonlocal}

Having obtained a calibration for the semilinear problem involving the fractional Laplacian in our previous work~\cite{CabreErnetaFelipeNavarro-Calibration1}, we are now interested in extending this construction to a general class of nonlocal functionals.
In this way, we plan to obtain a similar picture to that of the general local theory treated in \cite[Section~3]{CabreErnetaFelipeNavarro-Calibration1}.
We find a functional $\calib$ that, at the formal level, is a calibration for the nonlocal energy functional~$\energy$. 
We said at the formal level since the appropriate regularity assumptions on the field of extremals will depend on the concrete given functional~$\energy$ and its associated nonlocal problem.

Consider the nonlocal energy functional $\energy$ of the form~\eqref{Intro_gen:energy}. 
Since $\domain$ is invariant with respect to the reflection $(x, y)\mapsto (y, x)$, we may assume without loss of generality that the Lagrangian $\lag$ is \emph{pairwise symmetric},\footnote{Here we follow the terminology of~\cite{Elbau}.} that is,
\begin{equation}\label{symmetry}
	\lag(y, x,b, a)= \lag(x, y, a, b) \quad \text{ for all } (x, y)\in \domain \text{ and } (a, b) \in \R^2.
\end{equation}
In particular, from the pairwise symmetry it follows that
\begin{equation}\label{der:symmetry}
	\partial_{b} \lag(x, y, \widetilde{a}, \widetilde{b}) = \partial_{a} \lag(y, x, \widetilde{b}, \widetilde{a}) \quad \text{ for all } (x, y)\in \domain \text{ and } (\widetilde{a}, \widetilde{b}) \in \R^2.
\end{equation}

The first variation of $\energy$ at $u$ in the direction of $\eta \in C_c^{\infty}(\R^n)$ (notice that $\eta$ is not necessarily supported in $\Omega$) is given by
\[
\begin{split}
	&\dfrac{\d}{\d \varepsilon} \energy(u + \varepsilon \eta)\Big|_{\varepsilon = 0} \\
	&= \dfrac{1}{2}\iint_{\domain}\!\! \partial_{a} \lag(x, y, u(x), u(y)) \, \eta(x) \d x \d y + \dfrac{1}{2}\iint_{\domain}\!\! \partial_{b} \lag(x, y, u(x), u(y)) \, \eta(y) \d x \d y\\
	&= \dfrac{1}{2}\iint_{\domain}\!\! \partial_{a} \lag(x, y, u(x), u(y)) \, \eta(x) \d x \d y + \dfrac{1}{2}\iint_{\domain}\!\! \partial_{b} \lag(y, x, u(y), u(x)) \, \eta(x) \d x \d y\\
	&= \iint_{\domain}\!\! \partial_{a} \lag(x, y, u(x), u(y)) \, \eta(x) \d x \d y,
\end{split}
\]
where we have used the symmetry of $\domain$ and the identity~\eqref{der:symmetry}.

Writing the domain $\domain$ as the disjoint union $\domain = (\Omega \times \R^{n}) \cup (\Omega^{c}\times \Omega)$, we can split the last integral to obtain
\begin{equation}\label{first:var}
	\begin{split}
		\dfrac{\d}{\d \varepsilon} \energy(u + \varepsilon \eta)\Big|_{\varepsilon = 0} 
		&= \int_{\Omega} \op (u)(x) \, \eta(x) \d x  + \int_{\Omega^{c}} \neumann(u)(x) \, \eta(x) \d x,
	\end{split}
\end{equation}
where we have introduced the nonlinear operators
\begin{equation*}
\op (u)(x) := \int_{\R^{n}} \partial_{a} \lag(x, y, u(x), u(y))\d y,
\end{equation*}
and
\begin{equation*}
\neumann (u)(x) := \int_{\Omega} \partial_{a} \lag (x, y, u(x), u(y)) \d y.
\end{equation*}
Consistent with the terminology in~\cite{CabreErnetaFelipeNavarro-Calibration1},
we refer to $\op$ as the \emph{Euler-Lagrange operator} associated to $\energy$, while $\neumann$ is its associated nonlocal \emph{Neumann operator}.

Since we are interested in minimization problems with respect to functions with the same exterior data, we only consider variations $\eta$ that are compactly supported in $\Omega$.
Thus, an extremal $u$ of $\energy$ will satisfy the Euler-Lagrange equation
\begin{equation}\label{euler:lagrange}
	\op (u) = 0 \quad \text{ in } \Omega.
\end{equation}

Given an interval $I \subset \R$, 
let $u^t\colon \R^n \to \R$ be a field in $\R^n$ (in the sense of Definition~\ref{def:foliation}), with $t \in I$, which covers the region
\[
\region := \{(x, \lambda)\in \R^{n}\times \R \,\colon\, \lambda = u^{t}(x) \text{ for some } t \in I\}.
\]
Let us also consider the class of admissible functions
\begin{equation}
\label{def:admissible}
\admissible := \{w \colon \R^{n} \to \R \,\colon\, w \text{ is sufficiently regular for } \lag \text{ and } {\rm graph }\, w \subset \region\},
\end{equation}
where ``sufficiently regular'' refers to the following issue.
Since we are not making any growth or structure assumption on $\lag$, the class of functions $w$ for which $\energy(w)$ makes sense must be chosen according to each nonlocal functional under investigation.
This will be the functions considered in $\admissible$, which may contain further regularity restrictions so that the operators $\op$ and $\neumann$, as well as all the integrals in the proofs, are well defined.

Let $t_0 \in I$. Our goal is to construct a calibration for $\energy$ and $u^{t_0}$.
We define the functional $\calib$ on $\admissible$ by
\begin{equation}\label{gen:calib}
	\calib(w) := \iint_{\domain} \int_{u^{t_0}(x)}^{w(x)} \partial_{a} \lag (x, y, u^{t}(x), u^{t}(y))\big|_{t = t(x, \lambda)}\d \lambda \d x \d y + \energy(u^{t_0}).
\end{equation}

By the above considerations and splitting the domain into $\domain = (\Omega \times \R^{n}) \cup (\Omega^{c}\times \Omega)$, we can rewrite~\eqref{gen:calib} as
\begin{equation}\label{gen:calib:sym}
	\begin{split}
		\calib(w) &= \int_{\Omega} \int_{u^{t_0}(x)}^{w(x)} \op(u^{t})(x)\big|_{t = t(x, \lambda)} \d \lambda \d x\\
		&\quad \quad \quad+\int_{\Omega^{c}} \int_{u^{t_0}(x)}^{w(x)} \neumann(u^{t})(x)\big|_{t = t(x, \lambda)} \d \lambda \d x + \energy(u^{t_0}).
	\end{split}
\end{equation}
Notice that~\eqref{gen:calib:sym} is the ``canonical'' nonlocal analogue of identity \eqref{Intro_calib:loc:2}, and thus of the classical local Weierstrass calibration~$\calibloc$; see Theorem~3.1 in \cite{CabreErnetaFelipeNavarro-Calibration1} for more details.

Next we show that if the field $\{u^t\}_{t \in I}$ is made up of supersolutions above $u^{t_0}$ and subsolutions below $u^{t_0}$, then $u^{t_0}$ minimizes $\calib$ among functions in $\admissible$ with the same exterior data.
Furthermore, if \emph{all} the functions $u^{t}$ satisfy the Euler-Lagrange equation (i.e., $u^t$ is a field of extremals), then $\calib$ is a null-Lagrangian and its value depends only on the exterior datum.
The following result (properties \ref{def:calib:1} and \ref{def:calib:1:prime} of the calibration) follows readily from expression~\eqref{gen:calib:sym} for $\calib.$ Note that here we do not need to assume the ellipticity of~$\lag$.
\begin{proposition}\label{property:1}
	Given an interval $I \subset \R$,
	a bounded domain $\Omega \subset \R^n$,
	and a pairwise symmetric function $\lag = \lag(x, y,a, b)$ in the sense of~\eqref{pairwise:symmetric},
	let $\{u^{t}\}_{t \in I}$ be a field in~$\R^n$ (in the sense of Definition \ref{def:foliation}) which is sufficiently regular for $\lag$.
	Assume that, for $t_0 \in I$, the leaves satisfy the inequalities
	\begin{equation}\label{sup:sub}
		\begin{split}
			\op(u^{t}) \geq 0 \quad \text{ in } \Omega 
			\quad
			\text{ for } t \geq t_0, \ \text{ and }
			\\
			\op(u^{t}) \leq 0 \quad \text{ in } \Omega \quad \text{ for } t \leq t_0. \ \ \ \ \ \ \ \ 
		\end{split}
	\end{equation}
	Consider the set of admissible functions $\admissible$ defined in \eqref{def:admissible}.
	
	Then, for all $w$ in $\admissible$ such that $w \equiv u^{t_0}$ in $\Omega^{c}$, 
	the functional $\calib$ defined in \eqref{gen:calib} satisfies
	\[
	\calib(u^{t_0})\leq \calib(w).
	\]
	
	Assume in addition that the leaves satisfy the Euler-Lagrange equation~\eqref{euler:lagrange}, that is,
	\begin{equation}\label{extremal}
		\op(u^{t}) = 0 \quad \text{ in } \Omega \quad \text{ for all } t \in I.
	\end{equation}
	Then, for all $w$ in $\admissible$ such that $w \equiv u^{t_0}$ in $\Omega^{c}$, we have
	\[
	\calib(u^{t_0}) = \calib(w).
	\]
\end{proposition}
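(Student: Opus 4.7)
The plan is to exploit the already-derived identity~\eqref{gen:calib:sym}, which expresses $\calib$ as the sum of an $\Omega$-integral involving $\op$, an $\Omega^c$-integral involving $\neumann$, and the constant $\energy(u^{t_0})$. Under the assumption $w \equiv u^{t_0}$ in $\Omega^c$, the inner $\lambda$-limits in the $\Omega^c$-term collapse for every $x \in \Omega^c$, so that term vanishes identically. Evaluating at $w = u^{t_0}$ similarly annihilates the $\Omega$-term, which yields $\calib(u^{t_0}) = \energy(u^{t_0})$. Hence the whole proposition reduces to controlling the sign (or the vanishing) of
\[
\calib(w) - \calib(u^{t_0}) = \int_{\Omega} \int_{u^{t_0}(x)}^{w(x)} \op(u^{t})(x)\big|_{t = t(x, \lambda)} \d \lambda \d x.
\]

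Next, I will analyze the inner integrand pointwise in $x \in \Omega$, exploiting the strict monotonicity of $t \mapsto u^t(x)$ built into Definition~\ref{def:foliation}. I split into two cases according to whether $w(x) \geq u^{t_0}(x)$ or $w(x) < u^{t_0}(x)$. In the first case, for any $\lambda \in [u^{t_0}(x), w(x)]$ the defining relation $u^{t(x,\lambda)}(x) = \lambda \geq u^{t_0}(x)$ together with the monotonicity of $t \mapsto u^t(x)$ forces $t(x,\lambda) \geq t_0$; hypothesis~\eqref{sup:sub} then gives $\op(u^{t(x,\lambda)})(x) \geq 0$, so the $\lambda$-integral is non-negative. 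In the second case, the same argument yields $t(x,\lambda) \leq t_0$ and $\op(u^{t(x,\lambda)})(x) \leq 0$ for $\lambda \in [w(x), u^{t_0}(x)]$; reversing the oriented limits flips the sign and again delivers a non-negative contribution. Either way, the outer $\Omega$-integrand is pointwise non-negative, yielding $\calib(w) \geq \calib(u^{t_0})$, i.e., property~\ref{def:calib:1:prime}.

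For the second assertion, hypothesis~\eqref{extremal} directly gives $\op(u^t)(x) = 0$ for every $t \in I$ and every $x \in \Omega$, so the inner integrand vanishes identically and the displayed difference is zero; this establishes $\calib(w) = \calib(u^{t_0})$ and hence property~\ref{def:calib:1}. I do not foresee any genuine obstacle: identity~\eqref{gen:calib:sym} has already done the heavy lifting, and what remains is merely bookkeeping with signs. The only point requiring a small amount of care is the second case of the sign analysis, where the oriented integral $\int_{u^{t_0}(x)}^{w(x)}$ with $w(x) < u^{t_0}(x)$ must first be re-expressed as $-\int_{w(x)}^{u^{t_0}(x)}$ before non-negativity can be read off; once this is noted, the argument is purely mechanical.
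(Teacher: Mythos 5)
Your proposal is correct and follows essentially the same route as the paper: both start from the symmetrized identity~\eqref{gen:calib:sym}, observe that $\calib(u^{t_0}) = \energy(u^{t_0})$ and that the $\Omega^c$-term drops out when $w \equiv u^{t_0}$ in $\Omega^c$, and then conclude by the pointwise sign analysis of $\int_{u^{t_0}(x)}^{w(x)} \op(u^t)\big|_{t=t(x,\lambda)} \d\lambda$ via the monotonicity of the leaves, with the extremal case giving equality. Your handling of the case $w(x) < u^{t_0}(x)$ is just a spelled-out version of the paper's ``treated similarly''.
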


\begin{proof}
	First, notice that $\calib(u^{t_0}) = \energy(u^{t_0})$.
	Since $w \equiv u^{t_0}$ in $\Omega^c$, by~\eqref{gen:calib:sym} we have
	\[
	\calib(w) - \calib(u^{t_0}) = \int_{\Omega}\int_{u^{t_0}(x)}^{w(x)} \op(u^{t})\big|_{t = t(x, \lambda)} \d \lambda \d x.
	\]
	Assuming~\eqref{sup:sub}, it suffices to show that for all $x \in \Omega$ we have
	\begin{equation}\label{ineq:aux}
		\int_{u^{t_0}(x)}^{w(x)} \op(u^{t})\big|_{t = t(x, \lambda)} \d \lambda \geq 0.
	\end{equation}
	If $w(x) \geq u^{t_0}(x)$, then, using that the functions $\{u^{t}\}_{t \in I}$ are increasing in $t$, we have $t(x,\lambda)\geq t_0$ for $\lambda \in [u^{t_0}(x), w(x)]$. 
Hence, 
by assumption~\eqref{sup:sub}, 
$\op(u^{t})\big|_{t = t(x, \lambda)} \geq 0$ and ~\eqref{ineq:aux} follows in this case.
The case $w(x)\leq u^{t_0}(x)$ is treated similarly.

If we further assume~\eqref{extremal}, then the integral in~\eqref{ineq:aux} vanishes and the claim follows.
\end{proof}

The functional $\calib$ can be rewritten in the following alternative form that we will use to verify the remaining calibration properties~\ref{def:calib:2} and~\ref{def:calib:3}.
\begin{lemma}
	\label{lemma:alt:expression}
	Given an interval $I \subset \R$,
	a bounded domain $\Omega \subset \R^n$,
	and a  pairwise symmetric function $\lag = \lag(x, y,a, b)$ in the sense of~\eqref{symmetry},
let $\{u^{t}\}_{t \in I}$ be a field in~$\R^n$ (in the sense of Definition \ref{def:foliation}) which is sufficiently regular for $\lag$.
	Consider the set of admissible functions $\admissible$ defined in~\eqref{def:admissible}.
	
	Then,
	for all $w$ in $\admissible$,
	the functional $\calib$ defined in \eqref{gen:calib} satisfies
	\begin{equation}\label{alt:expression}
		\begin{split}
			\calib(w) &= \dfrac{1}{2}\iint_{\domain} \lag(x, y, w(x), u^{t}(y))\big|_{t = t(x, w(x))} \d x \d y \\
			&\quad\quad + \dfrac{1}{2}\iint_{\domain} \int_{t(x, w(x))}^{t(y, w(y))} \partial_{b} \lag(x, y, u^{t}(x), u^{t}(y)) \partial_t u^{t}(y) \d t \d x \d y.
		\end{split}
	\end{equation}
\end{lemma}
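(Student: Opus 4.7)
The plan is to transform the definition of $\calib(w)$ in~\eqref{gen:calib} into the claimed expression~\eqref{alt:expression} by a change of variable, a symmetrization, and a clever splitting of the resulting $t$-interval that will let us recognize a $t$-derivative of $\lag$ along the field.

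\textbf{Step 1 (change of variable).} In the inner integral of~\eqref{gen:calib}, I substitute $\lambda = u^{\tau}(x)$, so that $\tau = t(x,\lambda)$ and $\d \lambda = \partial_\tau u^{\tau}(x)\d \tau$. Since $u^{t_0}(x)$ corresponds to $\tau=t_0$ and $w(x)$ to $\tau=t(x,w(x))$, this yields
\[
\calib(w)-\energy(u^{t_0}) = \iint_{\domain} \int_{t_0}^{t(x,w(x))} \partial_{\aa}\lag(x,y,u^{\tau}(x),u^{\tau}(y))\,\partial_\tau u^{\tau}(x)\d \tau \d x \d y.
\]

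\textbf{Step 2 (symmetrization).} I write this integral as $\tfrac{1}{2}(\cdot)+\tfrac{1}{2}(\cdot)$ and in the second copy swap $(x,y)$, using the invariance of $\domain$ under this swap and the identity~\eqref{der:symmetry} (which follows from pairwise symmetry of $\lag$). The result is
\[
\calib(w)-\energy(u^{t_0}) = \tfrac{1}{2}\iint_{\domain} \int_{t_0}^{t(x,w(x))} \partial_{\aa}\lag\, \partial_\tau u^{\tau}(x)\d \tau \d x \d y + \tfrac{1}{2}\iint_{\domain} \int_{t_0}^{t(y,w(y))} \partial_{\bb}\lag\, \partial_\tau u^{\tau}(y)\d \tau \d x \d y,
\]
where the arguments of $\lag$ are $(x,y,u^{\tau}(x),u^{\tau}(y))$ throughout.

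\textbf{Step 3 (splitting and recognizing a total derivative).} In the second integral I split
\[
\int_{t_0}^{t(y,w(y))} = \int_{t_0}^{t(x,w(x))} + \int_{t(x,w(x))}^{t(y,w(y))}.
\]
On the common range $[t_0,t(x,w(x))]$ the integrand of the first piece of Step~2 plus the new piece from the second integral recombines via the chain rule:
\[
\partial_{\aa}\lag\, \partial_\tau u^{\tau}(x) + \partial_{\bb}\lag\, \partial_\tau u^{\tau}(y) = \frac{\d}{\d \tau}\bigl\{\lag(x,y,u^{\tau}(x),u^{\tau}(y))\bigr\}.
\]
Applying the fundamental theorem of calculus and recalling that $u^{\tau}(x)\big|_{\tau=t(x,w(x))}=w(x)$, the combined contribution becomes
\[
\tfrac{1}{2}\iint_{\domain}\!\bigl\{\lag(x,y,w(x),u^{t}(y))\big|_{t=t(x,w(x))}-\lag(x,y,u^{t_0}(x),u^{t_0}(y))\bigr\}\d x \d y,
\]
and the second term here is precisely $-\energy(u^{t_0})$, which cancels the $\energy(u^{t_0})$ on the left. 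The leftover piece from the splitting is exactly
\[
\tfrac{1}{2}\iint_{\domain} \int_{t(x,w(x))}^{t(y,w(y))} \partial_{\bb}\lag(x,y,u^{\tau}(x),u^{\tau}(y))\,\partial_\tau u^{\tau}(y)\d \tau \d x \d y,
\]
which is the second summand in~\eqref{alt:expression}. Combining the two contributions yields the identity claimed in the lemma.

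\textbf{Where the work lies.} The computation is really a bookkeeping argument; the substantive point is the recognition that after the $(x,y)$-symmetrization the original single-term integrand is a ``partial'' $\tau$-derivative of $\lag$ along the field, completed into a full derivative once the second $\tau$-range is split around $t(x,w(x))$. The main technical obstacle is purely one of legitimacy rather than algebra: interchanging the integrals in $(x,y,\tau)$ and applying Fubini throughout requires the ``sufficiently regular for $\lag$'' hypothesis to ensure that each integrand is absolutely integrable, which is exactly the caveat the authors flagged before stating Theorem~\ref{thm:nonlocal:calibration}. Granting this, the manipulations above are justified term-by-term and~\eqref{alt:expression} follows.
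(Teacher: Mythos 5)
Your proposal is correct and follows essentially the same route as the paper's own proof: the change of variables $\lambda = u^{t}(x)$, the $(x,y)$-symmetrization via \eqref{der:symmetry}, the splitting of the $t$-range at $t(x,w(x))$ to recognize the total $t$-derivative of $\lag$ along the field, and the final application of the fundamental theorem of calculus with $u^{t(x,w(x))}(x)=w(x)$. The only difference is cosmetic (you cancel $\energy(u^{t_0})$ on both sides rather than adding it back), and your remark on Fubini/integrability matches the paper's ``sufficiently regular'' caveat.
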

\begin{remark}
While the definition of the calibration in~\eqref{gen:calib} seemingly depends on a given $t_0$,
Lemma~\ref{lemma:alt:expression} shows that it is, in fact, independent of this choice.
\end{remark}
\begin{proof}
We first rewrite the integral term in the definition~\eqref{gen:calib} of $\calib$.
Applying the change of variables $\lambda \mapsto t$ with $u^{t}(x) = \lambda$ for each $x$, we have
	\[
	\begin{split}
		&\iint_{\domain} \int_{u^{t_0}(x)}^{w(x)}\partial_{a} \lag (x, y, u^{t}(x), u^{t}(y))\big|_{t = t(x, \lambda)}\d \lambda \d x \d y \\
		& \hspace{2cm} = \iint_{\domain} \int_{t_0}^{t(x, w(x))}  \partial_{a} \lag (x, y, u^{t}(x), u^{t}(y)) \partial_t u^{t}(x)\d t \d x \d y.
	\end{split}
	\]
	Symmetrizing this expression in $(x, y)$ and using~\eqref{der:symmetry}, we deduce	
	\begin{equation}\label{l:1}
		\begin{split}
			&\iint_{\domain} \int_{u^{t_0}(x)}^{w(x)}\partial_{a} \lag (x, y, u^{t}(x), u^{t}(y))\big|_{t = t(x, \lambda)}\d \lambda \d x \d y \\
			&\hspace{1.5cm}= \dfrac{1}{2}\iint_{\domain}\int_{t_0}^{t(x, w(x))} \partial_{a} \lag(x, y, u^{t}(x), u^{t}(y)) \partial_t u^{t}(x) \d t \d x \d y \\
			&\hspace{2.5cm}\quad +\dfrac{1}{2}\iint_{\domain}\int_{t_0}^{t(y, w(y))} \partial_{b}\lag(x, y, u^{t}(x), u^{t}(y)) \partial_t u^{t}(y) \d t \d x \d y.
		\end{split}
	\end{equation}
	Splitting the integral $\int_{t_0}^{t(y, w(y))} \cdot \d t$ in~\eqref{l:1} into $\int_{t_0}^{t(x, w(x))}\cdot \d t + \int_{t(x, w(x))}^{t(y, w(y))} \cdot \d t$, we obtain
	\begin{equation}\label{l:2}
		\begin{split}
			&\iint_{\domain} \int_{u^{t_0}(x)}^{w(x)} \partial_{a} \lag (x, y, u^{t}(x), u^{t}(y))\big|_{t = t(x, \lambda)}\d \lambda \d x \d y \\
			&\hspace{2cm}= \dfrac{1}{2}\iint_{\domain} \int_{t_0}^{t(x, w(x))} 
			\dfrac{\d}{\d t}\big\{ \lag(x, y, u^{t}(x), u^{t}(y)) \big\} 
			\d t \d x \d y\\
			&\hspace{2.5cm}\ \ \quad +
			\dfrac{1}{2}\iint_{\domain} \int_{t(x, w(x))}^{t(y, w(y))} \partial_{b} \lag (x, y, u^{t}(x), u^{t}(y)) \partial_t u^{t}(y) \d t \d x \d y.
		\end{split}
	\end{equation}
	Integrating the derivative with respect to $t$ in \eqref{l:2} and recalling, by definition of the leaf-parameter function, that $w(x) = u^{t(x,w(x))}(x)$, we have
	\begin{equation}\label{l:3}
		\begin{split}
			&\iint_{\domain} \int_{u^{t_0}(x)}^{w(x)} \partial_{a} \lag (x, y, u^{t}(x), u^{t}(y))\big|_{t = t(x, \lambda)}\d \lambda \d x \d y \\
			&\hspace{2cm}= \dfrac{1}{2}\iint_{\domain} \lag(x, y, w(x), u^{t(x, w(x))}(y)) \d x \d y - \energy(u^{t_0})\\
			&\hspace{2.5cm}\ \ \quad +\dfrac{1}{2}\iint_{\domain} \int_{t(x, w(x))}^{t(y, w(y))} \partial_{b} \lag (x, y, u^{t}(x), u^{t}(y)) \partial_t u^{t}(y) \d t \d x \d y.
		\end{split}
	\end{equation}
	Adding $\energy(u^{t_0})$ to both sides of~\eqref{l:3} now yields the claim. 
\end{proof}

In the next proposition we prove the calibration property~\ref{def:calib:2}. This follows directly from Lemma~\ref{lemma:alt:expression}. Here, ellipticity of~$\lag$ is still not needed.
\begin{proposition}\label{property:2}
	Given an interval $I \subset \R$,
	a bounded domain $\Omega \subset \R^n$,
	and a  pairwise symmetric function $\lag = \lag(x, y,a, b)$ in the sense of~\eqref{symmetry},
let $\{u^{t}\}_{t \in I}$ be a field in~$\R^n$ (in the sense of Definition \ref{def:foliation}) which is sufficiently regular for $\lag$.
	
	Then, for all $t \in I$, 
	the functional $\calib$ defined in \eqref{gen:calib} satisfies
	\[
	\calib(u^{t}) = \energy(u^{t}).
	\]
\end{proposition}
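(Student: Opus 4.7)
The plan is to derive the identity $\calib(u^{t}) = \energy(u^{t})$ directly from the alternative representation~\eqref{alt:expression} furnished by Lemma~\ref{lemma:alt:expression}, so that essentially all of the analytic work has already been done in that lemma. The key observation is that, by the very definition of the leaf-parameter function in~\eqref{Eq:DefLeafParameter}, whenever $w$ is itself one of the leaves, $w = u^{\tau}$, we have $t(x, w(x)) = t(x, u^{\tau}(x)) = \tau$ for every $x \in \R^n$. In particular, the leaf-parameter function $t(\cdot, w(\cdot))$ is constant in~$x$, taking the value $\tau$ everywhere.

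With this observation at hand, the two terms on the right-hand side of~\eqref{alt:expression} are immediate to evaluate for the choice $w = u^{\tau}$. The second iterated integral vanishes identically: for every $(x, y) \in \domain$, the inner integral in the parameter of the field runs from $t(x, w(x)) = \tau$ to $t(y, w(y)) = \tau$, so its two endpoints coincide. The first term reduces to $\tfrac{1}{2} \iint_{\domain} \lag(x, y, u^{\tau}(x), u^{\tau}(y)) \d x \d y$, since the substitution evaluating at $t = t(x, w(x)) = \tau$ turns $u^{t}(y)$ into $u^{\tau}(y)$. By the definition of $\energy$ in~\eqref{Intro_gen:energy}, this quantity is exactly $\energy(u^{\tau})$, and relabeling $\tau$ as $t$ yields the claim.

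I do not foresee any real obstacle. Neither the ellipticity condition~\eqref{Intro_lag:convex} nor the assumption that the $u^{t}$ are extremals are needed at this stage, so the conclusion genuinely holds for every leaf of the field. The only regularity requirement is that each $u^{t}$ belong to the admissible class $\admissible$, so that $\energy(u^{t})$ is well defined and the interchange of integrals performed inside Lemma~\ref{lemma:alt:expression} is legitimate; this is implicit in the assumption that the field is sufficiently regular for~$\lag$.
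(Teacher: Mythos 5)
Your proposal is correct and follows essentially the same route as the paper: plug $w = u^{\tau}$ into the alternative expression~\eqref{alt:expression}, use that $t(x, u^{\tau}(x)) \equiv \tau$ so the second integral has coinciding endpoints and vanishes, and identify the remaining term with $\energy(u^{\tau})$. If anything, evaluating at a general leaf $u^{\tau}$ rather than at the base leaf $u^{t_0}$ makes the ``for all $t \in I$'' claim slightly more transparent than the paper's own wording.
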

\begin{proof}
Let $t_0 \in I$.
Choosing $w = u^{t_0}$ in~\eqref{alt:expression}, since $t(x, w(x)) = t_0$ for all $x$, we have
\[
\calib(u^{t_0}) = \dfrac{1}{2}\iint_{\domain} \lag(x, y, u^{t_0}(x), u^{t_0}(y)) \d x \d y = \energy(u^{t_0}).
\]
Since $t_0 \in I$ was arbitrary, this proves the proposition.
\end{proof}

It remains to prove the last calibration property~\ref{def:calib:3}.
We will now need the natural ellipticity assumption~\eqref{Intro_lag:convex} on the Lagrangian $\lag$.
This condition is related to a strong comparison principle, as explained in Appendix~\ref{section:comparison}.

\begin{proposition}\label{property:3}
	Given an interval $I \subset \R$,
	a bounded domain $\Omega \subset \R^n$,
	and a  pairwise symmetric function $\lag = \lag(x, y,a, b)$ in the sense of~\eqref{symmetry},
let $\{u^{t}\}_{t \in I}$ be a field in~$\R^n$ (in the sense of Definition \ref{def:foliation}) which is sufficiently regular for $\lag$.
		Consider the set of admissible functions $\admissible$ defined in \eqref{def:admissible}.
	Assume that the ellipticity condition $\partial_{a b}^2 \lag \leq 0$ holds.
	
	Then, for all $w$ in $\admissible$,
	the functional $\calib$ defined in \eqref{gen:calib} satisfies
	\[
	\calib(w) \leq \energy(w).
	\]
\end{proposition}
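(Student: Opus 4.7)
The plan is to start from the alternative expression for $\calib(w)$ provided by Lemma~\ref{lemma:alt:expression} and compare it directly to $\energy(w)$. Since $\energy(w) = \tfrac{1}{2}\iint_{\domain} \lag(x,y,w(x),w(y))\d x\d y$, the difference $\energy(w) - \calib(w)$ breaks into two pieces: one comparing the Lagrangian term $\lag(x,y,w(x),w(y))$ with $\lag(x,y,w(x),u^{t}(y))\big|_{t=t(x,w(x))}$, and the other being (minus) the double integral involving $\partial_\bb \lag$ that appears in~\eqref{alt:expression}.

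The first step is to rewrite the ``$\lag$-difference'' as an integral along the foliation. Using the identities $u^{t(x,w(x))}(x) = w(x)$ and $u^{t(y,w(y))}(y) = w(y)$ together with the fundamental theorem of calculus applied in the $\bb$-variable along the curve $s \mapsto u^s(y)$, I would write
\[
\lag(x,y,w(x),w(y)) - \lag(x,y,w(x),u^{t(x,w(x))}(y)) = \int_{t(x,w(x))}^{t(y,w(y))} \partial_\bb \lag(x,y,w(x),u^{s}(y))\, \partial_s u^{s}(y) \d s.
\]
Subtracting~\eqref{alt:expression} then yields
\[
\energy(w) - \calib(w) = \dfrac{1}{2}\iint_{\domain} \int_{t(x,w(x))}^{t(y,w(y))} \bigl[\partial_\bb \lag(x,y,w(x),u^{s}(y)) - \partial_\bb \lag(x,y,u^{s}(x),u^{s}(y))\bigr] \partial_s u^{s}(y) \d s \d x \d y.
\]

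The key step is to show that the inner integral over $s$ is nonnegative for almost every $(x,y) \in \domain$, in which case the whole double integral is nonnegative and the proposition follows. Setting $s_x := t(x,w(x))$ and $s_y := t(y,w(y))$ and noting $w(x) = u^{s_x}(x)$, I would split into two cases. If $s_x \leq s_y$, then for every $s \in [s_x,s_y]$ monotonicity of the field gives $u^{s}(x) \geq u^{s_x}(x) = w(x)$; the ellipticity hypothesis $\partial^{2}_{\aa\bb}\lag \leq 0$ says that $\aa \mapsto \partial_\bb \lag(x,y,\aa,\bb)$ is nonincreasing, so the bracket is nonnegative; combined with $\partial_s u^{s}(y) \geq 0$, the integrand is nonnegative and the integral $\int_{s_x}^{s_y}(\cdots)\d s \geq 0$. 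If instead $s_x > s_y$, then for $s \in [s_y,s_x]$ we get $u^{s}(x) \leq w(x)$, so the bracket is nonpositive; the integrand is then nonpositive, but the orientation $\int_{s_x}^{s_y} = -\int_{s_y}^{s_x}$ flips the sign and the integral is again $\geq 0$.

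I do not expect serious obstacles: the case-by-case sign analysis is elementary once the identity $w(x) = u^{s_x}(x)$ is exploited, and the rewriting of the $\lag$-difference is forced by the fundamental theorem of calculus along the leaves. The only delicate point is bookkeeping --- ensuring that the $\partial_\bb$ term produced by the $\lag$-difference matches the form of the second term in~\eqref{alt:expression} so that subtraction yields a single sign-definite integrand. All manipulations require only that the admissible class $\admissible$ be chosen so that Fubini applies and the integrals in~\eqref{alt:expression} converge, which is precisely what ``sufficiently regular for $\lag$'' means.
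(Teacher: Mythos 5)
Your proposal is correct and follows essentially the same route as the paper's proof: starting from the alternative expression in Lemma~\ref{lemma:alt:expression}, rewriting the $\lag$-difference by the fundamental theorem of calculus along the leaves, and then performing the case-by-case sign analysis using the monotonicity of the field together with the ellipticity condition $\partial^2_{\aa\bb}\lag \leq 0$ (interpreted as $\aa \mapsto \partial_\bb\lag$ being nonincreasing). The only differences are cosmetic (you carry the factor $\tfrac{1}{2}$ and spell out the second case, which the paper dismisses as ``treated similarly'').
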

\begin{proof}
	Computing the difference $\energy(w) - \calib(w)$, from the alternative expression \eqref{alt:expression} for $\calib$, we obtain 
	\begin{equation}\label{p3:1}
		\begin{split}
			&\energy(w) - \calib(w) \\
			&\hspace{1cm}= \dfrac{1}{2}\iint_{\domain} \big\{ \lag(x, y, w(x), w(y))- \lag(x, y, w(x), u^{t(x, w(x))}(y)) \big\} \d x \d y\\
			&\hspace{1.5cm}\quad -
			\dfrac{1}{2}\iint_{\domain} \int_{t(x, w(x))}^{t(y, w(y))} {\partial_{b} \lag}(x, y, u^{t}(x), u^{t}(y)) \partial_t u^{t}(y) \d t \d x \d y.
		\end{split}
	\end{equation}
	Recalling that $u^{t(y, w(y))}(y) = w(y)$, we can write the first integral on the right-hand side of~\eqref{p3:1} as
	\begin{equation}\label{p3:2}
		\begin{split}
			&\dfrac{1}{2}\iint_{\domain} \big\{\lag(x, y, w(x), w(y))- \lag(x, y, w(x), u^{t(x, w(x))}(y)) \big\} \d x \d y\\
			&\hspace{1.5cm}= \dfrac{1}{2}\iint_{\domain} \int_{t(x, w(x))}^{t(y, w(y))} \dfrac{\d}{\d t}\big\{\lag(x, y, w(x), u^{t}(y))\big\} \d t \d x \d y\\
			&\hspace{1.5cm}= \dfrac{1}{2}\iint_{\domain} \int_{t(x, w(x))}^{t(y, w(y))} \partial_{b} \lag(x, y, w(x), u^{t}(y))\partial_t u^{t}(y) \d t \d x \d y.
		\end{split}
	\end{equation}
	Plugging~\eqref{p3:2} into~\eqref{p3:1}, we see that
	\begin{align*}
		&\energy(w) - \calib(w) \\
		&\ = \iint_{\domain} \! \int_{t(x, w(x))}^{t(y, w(y))}
		\!\!\!\big\{
		\partial_{b} \lag(x, y, w(x), u^{t}(y))
		- \partial_{b} \lag(x, y, u^{t}(x), u^{t}(y)) 
		\big\} 
		\, \partial_t u^{t}(y) \d t \d x \d y.
	\end{align*}
	Thus, it suffices to show that
	\begin{equation}\label{p3:claim}
		\int_{t(x, w(x))}^{t(y, w(y))}\big\{
		\partial_{b} \lag(x, y, w(x), u^{t}(y))
		-\partial_{b} \lag(x, y, u^{t}(x), u^{t}(y))  
		\big \} \, \partial_t u^{t}(y) \d t \geq 0,
	\end{equation}
	for all $(x, y) \in \domain$.
	
	Let $(x, y)\in \domain$ and assume first that $t(x, w(x)) \leq t(y, w(y))$.
By monotonicity of the leaves $u^t$ in $I$, for $t \in [t(x,w(x)), t(y, w(y))]$ we have
	\[
	w(x) = u^{t(x, w(x))}(x) \leq u^{t}(x),
	\]
	and by ellipticity
	\[
	\partial_{b} \lag(x, y, w(x), u^{t}(y))\partial_t u^{t}(y) \geq \partial_{b} \lag(x, y, u^{t}(x), u^{t}(y))\partial_t u^{t}(y).
	\]
	Whence, \eqref{p3:claim} follows.
	The case $t(x, w(x)) \geq t(y, w(y))$ is treated similarly.
\end{proof}

Now, combining Propositions~\ref{property:1},~\ref{property:2}, and~\ref{property:3}, we easily conclude Theorem~\ref{thm:nonlocal:calibration}.

\begin{proof}[Proof of Theorem~\ref{thm:nonlocal:calibration}]
	(a) Property~\ref{def:calib:2} follows from Proposition~\ref{property:2} and property~\ref{def:calib:3} follows from Proposition~\ref{property:3}. 
	
	(b) This follows from the first part of Proposition~\ref{property:1}.
	
	(c) This follows from the second part of Proposition~\ref{property:1}.
\end{proof}

{
	Now that we have built a calibration for fields of extremals, we can show Corollary~\ref{cor:applications} on the minimality of monotone solutions to translation invariant equations.
}

\begin{proof}[Proof of Corollary~\ref{cor:applications}]
	For each $t \in \R$ we define $u^{t}(x) := u(x', x_n + t)$, where $x = (x', x_n) \in \R^{n-1}\times \R$.
	By the monotonicity~\eqref{monotone} of $u$
	and by translation invariance of the equation $\op(u) = 0$,
	it follows that the family $\{u^{t}\}_{t \in \R}$ is a field of extremals in~$\R^n$ in the sense of Definition~\ref{def:foliation}.
	Hence, Theorem~\ref{thm:nonlocal:calibration} yields the minimality of each $u^{t}$ among competitors $w$ with $w\equiv u$ in $\Omega^c$ and satisfying the assumption
	$$\lim_{\tau \to -\infty} u(x', \tau) < w(x', x_n) < \lim_{\tau \to +\infty} u(x', \tau) \ \ \text{ for all } \ \ x=(x',x_n)\in\Omega.$$
	
	Finally, we can relax the previous strict inequalities by considering the competitor 
	$(1-\varepsilon) w + \varepsilon u$ and letting~$\varepsilon~\to~0$.
	In this way we recover the condition in the statement of Corollary~\ref{cor:applications} where the inequalities are not strict.
\end{proof}

\section{The calibration for functionals involving both local and nonlocal terms}
\label{section:compound}

The results derived in Section~\ref{section:nonlocal} may be combined with the classical local ones to yield a theory that applies to functionals involving both local and nonlocal interactions. 
These functionals appear when dealing with symmetric Lévy processes, where the infinitesimal generators 
are given by the sum of a second order differential operator and an integro-differential one. Recently, mixed functionals have attracted great attention from different points of view; see~\cite{SuValdinociWeiZhang, MaioneMugnaiVecchi} 
and references therein.

The mixed energy\footnote{
Here in the notation we use the subscript M, which stands for ``Mixed''.
}
\begin{equation}
\label{def:mixed}
	\begin{split}
		\energytot(w) &:= \energy(w) + \energyloc(w)
		\\
		&= \dfrac{1}{2}\iint_{\domain} \lag(x, y, w(x), w(y)) \d x \d y + \int_{\Omega} \lagloc(x, w(x), \nabla w(x)) \d x,
	\end{split}
\end{equation}
admits a calibrating functional 
\begin{equation*}
	\calibtot(w) := \calib(w) + \calibloc(w),
\end{equation*}
where 
$\energyloc$, $\lagloc$, $\calibloc$, and $\energy$, $\lag$, $\calib$ are defined as in the Introduction.
By combining identities~\eqref{Intro_calib:loc:2} and~\eqref{gen:calib:sym}, the functional $\calibtot$ may be written equivalently as
\begin{equation*}
	\begin{split}
		\calibtot(w) &= \int_{\Omega} \int_{u^{t_0}(x)}^{w(x)} \optot(u^{t})\big|_{t = t(x, \lambda)} \d \lambda \d x\\
		&\quad \ \ 
		+\int_{\Omega^{c}} \int_{u^{t_0}(x)}^{w(x)} \neumann(u^{t})\big|_{t = t(x, \lambda)} \d \lambda \d x + \int_{\partial \Omega} \int_{u^{t_0}(x)}^{v(x)}  \neumannloc(u^{t}) \big|_{t = t(x,\lambda)} \d \lambda \d \mathcal{H}^{n-1}(x)\\
		&\quad \ \ +\energytot(u^{t_0}),
	\end{split}
\end{equation*}
where the Euler-Lagrange operator of the mixed problem is
\begin{equation*}
	\optot(w) := \op(w) + \oploc(w),
\end{equation*}
and 
$\oploc$, $\neumannloc$, $\op$, and $\neumann$ are the operators introduced above.

Since $\calibtot$ shares the same structure as $\calibloc$ and $\calib$, a straightforward adaptation of the proofs in the sections above shows that $\calibtot$ satisfies all three calibration properties.
We mention that property~\ref{def:calib:3} requires both the local and nonlocal ellipticity conditions, that is, one must assume that both
$$\partial^2_{qq} \lagloc(x, \lambda, q) \geq 0 \ \  \text{and} \ \ \partial^2_{a b} \lag(x, y, a, b) \leq 0$$
hold.

As an application of this theory, we can prove the analogue of Corollary~\ref{cor:applications} for mixed functionals.
Namely, if $\optot$ is translation invariant, i.e., $\optot(u(\cdot+y))(x) = \optot(u)(x+y)$ for all $x$ and $y$  in $\R^{n}$, then monotone solutions are minimizers among functions lying between the limits of the solution in the direction of monotonicity.
The proof is identical to the one of Corollary~\ref{cor:applications}.

\begin{remark}
Mixed energies appear in some relevant frameworks, as listed next. Note however that these minimization problems include constraints.
Thus, one cannot directly apply the calibration theory developed above, since constrained minimizers need not be minimizers of the original functional and no foliation of extremals is expected.
As examples of such frameworks, we mention the theory of aggregation equations~\cite{CarrilloCraigYao},
certain problems from astrophysics~\cite{LionsMinimizationL1}, the Thomas-Fermi theory~\cite{BenilanBrezis}, the Choquard-Pekar model~\cite{Lieb}, as well as the problem of finding the best constant in the Sobolev inequality~\cite{Talenti}.
\end{remark}

\section{Application to the viscosity theory}
\label{section:viscosity}

For the application of calibrations to prove that minimizers are viscosity solutions, we need to consider more general fields, namely, 
those which are not necessarily increasing in a bounded domain $\Omega \subset \R^n$, but only nondecreasing.
We need to consider the situation in which different leaves coincide in certain subsets of $\Omega$; see Figure \ref{Fig:Dibujo_viscosity} in the Introduction.
Such a field will appear when sliding a touching test function and truncating it with the minimizer. 

Due to the great generality of the nonlocal elliptic functionals considered in this work, 
general statements about them can only be given in a formal sense, as the ones presented in the previous sections.
Nevertheless, as mentioned above, such statements can be made fully rigorous under natural growth and regularity assumptions on the Lagrangian $\lag$.

In the present section, 
we first obtain rigorous results for 
semilinear equations involving the fractional Laplacian (Subsections~\ref{subsection:energyComparisonFractional} and~\ref{subsection:viscosityFractional})
and then present formal results concerning general nonlocal functionals (Subsection~\ref{subsection:viscosityGeneral}). 
Subsection~\ref{subsection:energyComparisonFractional} is devoted to the proof of a fractional energy comparison result for solutions embedded in a weak field.
This is a new result that is obtained by a calibration argument.
We then apply the energy comparison in Subsection~\ref{subsection:viscosityFractional} to conclude that minimizers of fractional functionals are viscosity solutions.
Finally, in Subsection~\ref{subsection:viscosityGeneral}, we extend the previous results to formal statements for 
general nonlocal functionals, including mixed functionals involving both local and nonlocal terms.

\subsection{An energy comparison result for the fractional Laplacian} \label{subsection:energyComparisonFractional}
We will follow the notation from our previous work~\cite{CabreErnetaFelipeNavarro-Calibration1},
recalled briefly here.
For $s \in (0,1)$ we consider 
the space of weighted $L^1$ functions
\[
L^1_{s}(\R^n) = \left\{ u \in L^1_{\rm loc}(\R^n) \colon \|u\|_{L^1_{s}(\R^n)} = \int_{\R^n} \frac{|u(y)|}{1 + |y|^{n+2s}} \d y < +\infty \right\}.
\]
If $u \in L^1_{s}(\R^n)$ is $C^2$ in a neighborhood of a point $x \in \R^n$, then the fractional Laplacian
\[
(-\Delta)^s u(x) = c_{n,s} \PV \int_{\R^n} \frac{u(x) - u(y)}{|x-y|^{n+2s}} \d y
\]
introduced above is well defined.

Given $s \in (0,1)$ and a bounded domain $\Omega\subset \R^{n}$,
let $u$ belong to $C(\overline{\Omega}) \cap \lsn$.
Assume that we are given functions $u^{t} \colon \R^{n} \to \R$,
with $t \in [0, \tmax]$ and nondecreasing in~$t$, 
and such that $u^0 = u$.
When the family $\{u^{t}\}_{t \in [0,\tmax]}$ satisfies appropriate regularity assumptions, we will be able to construct a calibration involving the weak field.

Consider the region 
\[
\region = \big\{(x,\lambda)\in \Omega\times \R : u^0(x)<\lambda \leq u^\tmax(x) \big\},
\]
as well as the sections
\[
\Omega_t := \{x \in \Omega \colon u^t(x) > u^0(x) 
\} \quad \text{ for each } t \in (0, \tmax],
\]
which will be increasing sets in $t$, and 
\[
I_x := \{t \in (0, \tmax] \colon u^t(x) > u^0(x)
\} 
\quad \text{ for each } x \in \Omega.
\]

\begin{definition}\label{def:visc:field}
Given $s \in (0,1)$ and a bounded domain $\Omega\subset \R^{n}$,
let $u$ 
$\in C(\overline{\Omega})~\cap~\lsn$.
	A family $\{u^{t}\}_{t \in [0,\tmax]}$ of functions $u^{t}\colon \R^n \to \R$ is said to be a 
	\emph{weak field for $u$ 
		in $\Omega$} (see Figure~\ref{Fig:Dibujo_viscosity})
	if
	the following conditions are satisfied:
	\begin{enumerate}[label= (\roman*)]
		\item\label{def:visc:field:1}
		$u^0 = u$.
		\item\label{def:visc:field:2}
		$u^t = u$ a.e. in $\Omega^c$, for all $t \in [0,\tmax]$.
		\item\label{def:visc:field:3} The function $(x,t) \mapsto u^t(x)$ is continuous in $\overline{\Omega}\times [0,\tmax]$.
		\item\label{def:visc:field:4} 
		For each $x \in \Omega$, the function $t \mapsto u^t(x)$ is $C^1(I_x)$ and increasing in $\overline{I_x}$.
		Moreover, there exists a constant $C_0 > 0$ such that
		\[
		0 \leq \partial_{t}u^{t}(x) \leq C_0 \quad \text{for all } x \in \Omega \text{ and } t \in I_x.
		\]
	\end{enumerate}	
	Moreover, the weak field is \emph{regular by below}, if the following regularity condition holds:
	\begin{enumerate}[label= (\roman*)]
		\setcounter{enumi}{4}
		\item\label{def:visc:field:5}
		The functions $\{u^t\}_{t \in (0,\tmax)}$ are uniformly $C^{1,1}$ by below in $\Omega_t$, uniformly in $t$,
		in the following sense.
		There exist a constant $C_0 > 0$ and a bounded domain $N \subset \R^n$, with $\overline{\Omega} \subset N$, such that, 
		for each $t \in (0,\tmax)$ and $x \in \Omega_t$,
		there is a function $\psi \in C^2(N)$ touching $u^{t}$ by below in $N$ at $x$, that is,
		$\psi(x) = u^{t}(x)$ and $\psi \leq u^{t}$ a.e. in $N$,
		satisfying
		\[
		D^2 \psi \geq -C_{0} \quad \text{ in }N.
		\]
	\end{enumerate}
\end{definition}

\begin{remark}
	The more technical assumption in Definition~\ref{def:visc:field}, condition \ref{def:visc:field:5}, is needed for the calibration of the fractional Laplacian to be well defined.
	An important consequence of \ref{def:visc:field:5} is that the fractional Laplacian $(-\Delta)^s u^{t}(x)$ is bounded by above uniformly in $t \in (0,\tmax)$ and $x \in \Omega_t$.

In our previous work~\cite{CabreErnetaFelipeNavarro-Calibration1}, we considered the more regular class of $C^2$ fields.
For these fields, the fractional Laplacian is continuous in a neighborhood of $\overline{\Omega}$ and hence the calibration takes finite values on functions with finite energy.
By contrast, in our application to the viscosity theory we will construct a weak field by sliding a $C^2$ function and taking the maximum with the critical point $u$.
This naturally yields a field which is only $C^{1,1}$ by below.
The calibration obtained from this weak field is bounded by above, but not necessarily by below.
In particular, the calibration could be $-\infty$ at some admissible functions (even at some leaves of the weak field).
However, the bound by above suffices to carry out the argument.
\end{remark}

Recall that, given $s \in (0,1)$  and $F \in C^1(\R)$, we have
\begin{equation*}
\energyfrac(w)
=
\dfrac{c_{n,s}}{4}\iint_{\domain} \dfrac{|w(x)-w(y)|^2}{|x-y|^{n+2s}} \d x \d y - \int_{\Omega} F(w(x)) \d x.
\end{equation*}
We also write
\[
\gagliardo(w) = \dfrac{c_{n,s}}{4}\iint_{\domain} \dfrac{|w(x)-w(y)|^2}{|x-y|^{n+2s}} \d x \d y,
\]
following the notation introduced in our previous work~\cite{CabreErnetaFelipeNavarro-Calibration1}.
The weak field allows to compare the energies of $u$ and any leaf $u^t$ via the following theorem:

\begin{theorem}
	\label{thm:energy:comparison:frac}
	Given a bounded domain $\Omega \subset \R^n$, $s \in (0,1)$, and $u \in C(\overline{\Omega})~\cap~\lsn$,
	let $\{u^{t}\}_{t \in [0,\tmax]}$ be a weak field for $u$
	 in $\Omega$
	which is regular by below
	in the sense of Definition~\ref{def:visc:field}.
	
	Then, if $\gagliardo(u) < +\infty$, given $F \in C^1(\R)$  we have
	\begin{equation}
	\label{last:ineq}
		\energyfrac(u^{T})\leq \energyfrac(u) + \int_{\Omega \cap \{u^{T}>u\}} \int_{u(x)}^{u^{T}(x)} \big( (-\Delta)^s u^{t}(x) - F'(u^{t}(x))\big) \big|_{t = t(x, \lambda)} \d \lambda \d x.
	\end{equation}
\end{theorem}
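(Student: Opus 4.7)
The plan is to adapt the calibration construction of our previous work \cite{CabreErnetaFelipeNavarro-Calibration1} to the weak field $\{\varphi^t\}_{t\in[0,\tmax]}$, and to anchor the calibration at $v$ rather than at $u$. That is, I take the base leaf to be $\varphi^{\tmax}=v$ (the analogue of choosing $t_0 = \tmax$ in Theorem~\ref{thm:nonlocal:calibration}). Doing so, the usual touching identity gives $\calibfrac(v)=\energyfrac(v)$ and the ellipticity inequality gives $\calibfrac(u)\le \energyfrac(u)$; chaining these two properties yields the inequality in the stated direction. Anchoring at $u$ instead would produce the reverse-sign statement, which is not what we want.

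\textbf{The calibration and its evaluation at $u$.} Following the expression \eqref{gen:calib:sym}, I define, for admissible $w$ with $\text{graph}\,w\subset \overline{\region}$,
\[
\calibfrac(w):= \int_{\Omega}\!\int_{v(x)}^{w(x)} \bigl(\fraclaplacian \varphi^{t}(x)-F'(\varphi^{t}(x))\bigr)\Big|_{t=t(x,\lambda)}\d\lambda\d x
+\int_{\Omega^{c}}\!\int_{v(x)}^{w(x)}\!\neumannfrac(\varphi^{t})(x)\big|_{t=t(x,\lambda)}\d\lambda\d x+\energyfrac(v).
\]
Since $u\equiv v$ a.e.\ in $\Omega^{c}$ by hypothesis and $\varphi^{t}\equiv u\equiv v$ there by Definition~\ref{def:visc:field}\ref{def:visc:field:2}, the $\Omega^{c}$-integral vanishes when $w=u$. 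Reversing the limits then gives
\[
\calibfrac(u)=\energyfrac(v)-\int_{\Omega\cap\{v>u\}}\!\int_{u(x)}^{v(x)} \bigl(\fraclaplacian\varphi^{t}(x)-F'(\varphi^{t}(x))\bigr)\Big|_{t=t(x,\lambda)}\d\lambda\d x,
\]
where the restriction to $\Omega\cap\{v>u\}$ is automatic because for $x$ with $u(x)=v(x)$ the inner integral is empty.

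\textbf{The two calibration inequalities.} The identity $\calibfrac(v)=\energyfrac(v)$ is immediate since $\int_{v(x)}^{v(x)}\cdots\d\lambda=0$. For the key inequality $\calibfrac(u)\le\energyfrac(u)$, I rewrite $\calibfrac(u)$ in the symmetric form given by the analogue of Lemma~\ref{lemma:alt:expression} (performing the change of variables $\lambda\mapsto t$ with $\varphi^{t}(x)=\lambda$, which is legitimate on $\region$ by Definition~\ref{def:visc:field}\ref{def:visc:field:4}) and then apply the ellipticity argument of Proposition~\ref{property:3}. The pointwise step is exactly as there: on the interval of $t$-integration one has $u(x)=\varphi^{0}(x)\le\varphi^{t}(x)$, and the sign condition $\partial^{2}_{\aa\bb}\lag\le 0$ (here $\partial^{2}_{\aa\bb}\lag=-c_{n,s}/|x-y|^{n+2s}<0$) delivers the desired monotonicity of $\partial_{\bb}\lag$ in the $\aa$-slot. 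Combining $\calibfrac(u)\le\energyfrac(u)$ with the explicit formula for $\calibfrac(u)$ above gives exactly \eqref{last:ineq}.

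\textbf{Main obstacle.} The delicate point is that the weak field is only $C^{1,1}$ \emph{by below}, so $\fraclaplacian\varphi^{t}(x)$ is bounded from above (uniformly in $t$ and $x\in\Omega_{t}$, by Definition~\ref{def:visc:field}\ref{def:visc:field:5}) but may well be $-\infty$ at points where $\varphi^{t}$ has a concave kink (e.g.\ where the slid test function meets $u$). Consequently $\calibfrac(u)$ must be interpreted as a value in $[-\infty,+\infty)$, and one must justify that (i) all Fubini interchanges and the change of variable $\lambda\mapsto t$ survive in this extended sense, (ii) the ellipticity step is valid pointwise even when $\fraclaplacian\varphi^{t}$ is unbounded below, and (iii) the final inequality $\energyfrac(v)\le\energyfrac(u)+\text{(integral)}$ is meaningful because the hypothesis $\gagliardo(u)<\infty$ forces $\energyfrac(u)<\infty$ and the one-sided bound on $\fraclaplacian\varphi^{t}$ prevents the right-hand side from being $+\infty$ while allowing it to be $-\infty$ (in which case the statement still holds trivially once one checks that $\energyfrac(v)<\infty$ via the Gagliardo control by $\gagliardo(u)$ and the $L^\infty$ bound on $v-u$). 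Handling these integrability issues rigorously, by splitting the domain into $\Omega_{t}$ and its complement and using the uniform bound $|\partial_{t}\varphi^{t}|\le C_{0}$, is the technical heart of the argument.
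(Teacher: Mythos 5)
Your overall strategy coincides with the paper's: anchor the calibration at $v=\varphi^{\tmax}$, use $\calibfrac(v)=\energyfrac(v)$ together with $\calibfrac(u)\le\energyfrac(u)$, and read off the energy comparison from the explicit formula for $\calibfrac(u)$. That part is right. However, there are two concrete issues with how you propose to close the argument.

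First, you have not actually supplied a mechanism to justify the Fubini interchange, the change of variable $\lambda\mapsto t$, and the passage of the ellipticity inequality to the limit, when the leaves are merely $C^{1,1}$ by below. The paper's proof handles this by working entirely with the truncated kernels $K_\varepsilon=c_{n,s}|\cdot|^{-n-2s}\mathds{1}_{\R^n\setminus B_\varepsilon}$ (so that every quantity is finite and the adapted version of the previous paper's lemma gives an exact identity), bounding via monotonicity of $t\mapsto\varphi^t$, and only at the very end applying Fatou's lemma, which is legitimate precisely because condition~\ref{def:visc:field:5} gives that $-(-\Delta)^s_\varepsilon\varphi^t(x)$ is bounded below uniformly in $\varepsilon$, $t$, and $x\in\Omega_t$. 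Your proposed route, ``splitting the domain into $\Omega_t$ and its complement and using $|\partial_t\varphi^t|\le C_0$'', invokes condition~\ref{def:visc:field:4} rather than~\ref{def:visc:field:5}; it does not control the singular part of $(-\Delta)^s\varphi^t$ and does not, by itself, license the limiting arguments. The truncation-plus-Fatou step is not a cosmetic technicality; it is exactly what makes the ellipticity inequality survive for weak fields, and your proposal leaves it essentially unaddressed.

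Second, your corner-case reasoning is wrong. You claim that if the right-hand side of \eqref{last:ineq} were $-\infty$, the statement would ``hold trivially once one checks $\energyfrac(v)<\infty$.'' That is backwards: if $\energyfrac(v)$ is a real number and the right-hand side equals $-\infty$, the inequality $\energyfrac(v)\le-\infty$ is \emph{false}, not trivially true. What the paper's argument actually establishes is that, under the hypotheses, the right-hand side cannot be $-\infty$: Fatou's lemma gives $\int\!\int(-(-\Delta)^s\varphi^t)\le\gagliardo(u)-\gagliardo(v)<\infty$, while condition~\ref{def:visc:field:5} bounds $-(-\Delta)^s\varphi^t$ from below, so the integral is finite and the inequality is a genuine comparison of real numbers. (Your proposed shortcut that $\energyfrac(v)<\infty$ follows ``from the $L^\infty$ bound on $v-u$'' also does not work: an $L^\infty$ bound on a compactly supported function does not put it in $\dot H^s$; the paper instead obtains $\gagliardo(v)<\infty$ as a byproduct of the truncated inequality.) A minor side remark: the points you describe where $(-\Delta)^s\varphi^t$ could degenerate are \emph{convex} kinks of the max, not concave ones; but this does not affect the structure of the argument.
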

Notice that the right-hand side in~\eqref{last:ineq} is well defined.
Indeed, while the second term could be $-\infty$,
the first one is finite by assumption.

\begin{proof}[Proof of Theorem~\ref{thm:energy:comparison:frac}]
First, note that for each $(x, \lambda) \in \region$, there exists a unique leaf-parameter function $t = t(x,\lambda) \in (0, T]$ such that $u^{t(x,\lambda)}(x) = \lambda$.  
Existence follows from property~\ref{def:visc:field:3}, since $u^{0}(x) < \lambda \leq u^{\tmax}(x)$, while uniqueness is guaranteed by~\ref{def:visc:field:4}.

We can naturally extend this definition to a larger domain.
For $x \in \Omega_T$, we define
\[
t(x,u^{0}(x)) := \lim_{\lambda \downarrow u^{0}(x)} t(x,\lambda).
\]
This extension makes the function $\lambda \mapsto t(x, \lambda)$ continuous up to the boundary on the interval $[u^{0}(x), u^{T}(x)]$.
For $x \in \Omega_T^{c}$, we simply set
\[
t(x, u^{0}(x)) = t(x,u^T(x)) := T.
\]
As a consequence of the above definition, we have the important identity
\[
u^{t(x, u^\tau(x))}(x) = u^\tau(x) \quad \text{for all } x \in \R^n \text{ and } \tau \in [0, \tmax].
\]
Here, the case $\tau \in (0, \tmax]$ is immediate, while $\tau = 0$ follows from the continuity of both the field and the leaf-parameter function.
	
Now, we proceed as in the proof of the calibration properties in our previous work~\cite{CabreErnetaFelipeNavarro-Calibration1}.
The idea is to consider the analogue of the fractional calibration $\calibfrac$ for $\energyfrac$ and $u^\tmax$ introduced in~\cite{CabreErnetaFelipeNavarro-Calibration1}, now for the weak field, and to use property~\ref{def:calib:3}, i.e., $\calibfrac(u) \leq \energyfrac(u)$.
This will lead to the desired energy comparison. 
The details go as follows.

As in~\cite{CabreErnetaFelipeNavarro-Calibration1}, for $\varepsilon> 0$, we consider the kernel $K_{\varepsilon } = c_{n,s}|\cdot|^{-n-2s}  \mathds{1}_{\R^{n}\setminus B_{\varepsilon}}$ and the truncated fractional Laplacian $(-\Delta)^s_{\varepsilon} \varphi(x) = \int_{\R^n} (\varphi(x) - \varphi(y))K_{\varepsilon}(x-y) \d y$.

Since $u$ and $u^\tmax$ differ only in the region $\Omega_{\tmax} \subset \Omega$, 
it suffices to consider the functionals $\gagliardo(u)$ and $\gagliardo(u^\tmax)$ 
defined in $Q(\Omega_{\tmax})$ instead of in the larger set 
\[
\domain = Q(\Omega_{\tmax}) \cup ((\Omega \setminus \Omega_{\tmax})\times \Omega_{\tmax}^{c}) \cup (\Omega^c \times (\Omega \setminus \Omega_{\tmax})).
\]
By a slight modification of the proof of Lemma~4.4 in~\cite{CabreErnetaFelipeNavarro-Calibration1},
(namely, making the change $\Omega \to \Omega_T$, $u^{t_0} \mapsto u^\tmax$, and $w \mapsto u$ in the last statement of that proof) 
we have the identity
\begin{equation}
\label{sir1}
\begin{split}
& \int_{\Omega_{\tmax}} \int_{u^\tmax(x)}^{u(x)} (-\Delta)^s_{\varepsilon} u^{t}(x) \big|_{t = t(x, \lambda)}\d x + \frac{1}{4}\iint_{Q(\Omega_{\tmax})} |u^\tmax(x)-u^\tmax(y)|^2  K_{\varepsilon}(x-y) \d x \d y\\
& \quad = - \frac{1}{2}\iint_{Q(\Omega_{\tmax})} \d x \d y K_{\varepsilon}(x-y) \int^{t(y,u(y))}_{t(x,u(x))}  (u^{t}(x)-u^{t}(y)) \,\partial_t{u}^t(y) \d t\\
&\quad \quad \quad \quad +\frac{1}{4}\iint_{Q(\Omega_{\tmax})}  |u(x) -u^{t(x,u(x))}(y)|^2 K_{\varepsilon}(x-y) \d x \d y.
\end{split}
\end{equation}
Moreover, since $u^{t}$ is nondecreasing in $t$, the first term in the right-hand side of \eqref{sir1} can be bounded by
\begin{equation}
\label{sir2}
\begin{split}
& - \frac{1}{2}\iint_{Q(\Omega_{\tmax})} \d x \d y K_{\varepsilon}(x-y) \int^{t(y,u(y))}_{t(x,u(x))}  (u^{t}(x)-u^{t}(y))\, \partial_t{u}^t(y) \d t \\
& \quad  \quad  \quad  \quad 
\leq - \frac{1}{2}\iint_{Q(\Omega_{\tmax})} \d x \d y K_{\varepsilon}(x-y) \int^{t(y,u(y))}_{t(x,u(x))}  (u(x)-u^{t}(y))\, \partial_t{u}^t(y) \d t \\
& \quad  \quad  \quad  \quad 
= \frac{1}{4}\iint_{Q(\Omega_{\tmax})} \d x \d y K_{\varepsilon}(x-y) \big(|u(x)-u(y)|^2 - |u(x)-u^{t(x,u(x))}(y)|^2\big),
\end{split}
\end{equation}
where in the last line we have used $- 2(u(x) - u^t(y)) \partial_t u^t(y) = \frac{\d}{\d t} |u(x) - u^t(y)|^2$.
Hence, 
denoting $\gagliardoeps(u) := \frac{1}{2}\iint_{\domain} |u(x)-u(y)|^2 K_{\varepsilon}(x-y) \d x \d y$,
by combining \eqref{sir1} and \eqref{sir2} we deduce
\begin{equation}
\label{sir3}
\begin{split}
\gagliardoeps(u) - \gagliardoeps(u^\tmax) &= \frac{1}{4}\iint_{Q(\Omega_{\tmax})} \big( |u(x)-u(y)|^2- |u^\tmax(x)-u^\tmax(y)|^2\big) K_{\varepsilon}(x-y) \d x \d y \\
&\geq \int_{\Omega_{\tmax}} \int_{u^\tmax(x)}^{u(x)} (-\Delta)^s_{\varepsilon} u^{t}(x) \big|_{t = t(x, \lambda)} \d \lambda \d x \\
&= \int_{\Omega_{\tmax}} \int_{u(x)}^{u^\tmax(x)} -(-\Delta)^s_{\varepsilon} u^{t}(x) \big|_{t = t(x, \lambda)} \d \lambda \d x.
\end{split}
\end{equation}

Finally, thanks to property \ref{def:visc:field:5}, the functions $-(-\Delta)^s_{\varepsilon} u^t(x)\big|_{t = t(x,\lambda)}$ are bounded by below 
in $\{(x, \lambda) \in \Omega_T \times \R \colon u(x) < \lambda < u^\tmax(x)\}$, uniformly in $\varepsilon$.
By Fatou's lemma we can pass to the limit as $\varepsilon \downarrow 0$ inside the integrals in \eqref{sir3} to obtain
	\[
	\gagliardo(u^\tmax) \leq \gagliardo(u) + \int_{\Omega_{\tmax}} \int_{u(x)}^{u^\tmax(x)} (-\Delta)^s u^{t}(x)\big|_{t = t(x,\lambda)} \d \lambda \d x.
	\]
Since $F(u(x)) - F(u^\tmax(x)) = \int_{u^\tmax(x)}^{u(x)} F'(\lambda) \d \lambda = \int_{u^\tmax(x)}^{u(x)} F'(u^{t}(x))\big|_{t = t(x, \lambda)} \d \lambda$,
adding the potential term now yields the result.
\end{proof}

\subsection{
Minimizers of functionals involving the Gagliardo seminorm are viscosity solutions
} \label{subsection:viscosityFractional}

We recall the definition of viscosity solution in the nonlocal setting.
The following is taken from Definition~2.2 in~\cite{CaffarelliSilvestreRegularity}:
\begin{definition}
\label{def:viscosity}
	Given bounded domain $\Omega \subset \R^n$, $s \in (0,1)$, and $F \in C^{1}(\R)$, we say that $u \in C(\Omega) \cap \lsn$ is a \emph{viscosity supersolution} of 
	the semilinear equation 
	\[
	(-\Delta)^s v = F'(v) \text{ in }  \Omega,
	\]
	if whenever the following happens
	\begin{itemize}
		\item $x_0$ is any point in $\Omega$
		\item $N$ is a neighborhood of $x_0$ in $\Omega$
		\item $\varphi$ is some $C^2$ function in $\overline{N}$
		\item $\varphi(x_0) = u(x_0)$
		\item $\varphi(x) < u(x)$ for every $x \in N \setminus\{x_0\}$,
	\end{itemize}
	then, the function
	\begin{equation} \label{extension-bar}
	\overline{\varphi}(x) := 
	\left\{
	\begin{array}{ll}
		\varphi(x) & \text{ for } x \in N,\\
		u(x) & \text{ for } x \in \R^n \setminus N,\\
	\end{array}
	\right.
	\end{equation}
	satisfies $(-\Delta)^s \overline{\varphi}(x_0) \geq F'(\overline{\varphi}(x_0))$.
\end{definition}
We also have the analogous definition of viscosity subsolution.
We say that $u$ is a viscosity solution if it is both a viscosity supersolution and subsolution.

Our main results in this section deal with 
minimizers of the energy functional $\energyfrac$.
In fact, we can also treat the larger class of one-sided minimizers, which are defined as follows:

\begin{definition}
	\label{def:one:sided}
	Given a bounded domain $\Omega \subset \R^n$, $s \in (0,1)$, and $F \in C^{1}(\R)$,
	we say that
	a function $u \colon \R^n \to \R$ 	
	is a \emph{one-sided minimizer by above} of the functional $\energyfrac$ in~$\Omega$ if 
	$\energyfrac(u)< \infty$ and, for all functions $v$ such that $v \geq u$ in $\Omega$ and $v = u$ in $\Omega^c$, 
	we have
	\[
	\energyfrac(v) \geq \energyfrac(u).
	\]
\end{definition}
We also have the analogous definition of one-sided minimizer by below.

We will now prove that one-sided minimizers by above are viscosity supersolutions.
This will follow from
Theorem \ref{thm:energy:comparison:frac}. As a consequence, minimizers are viscosity solutions.

\begin{theorem}
	\label{thm:viscosity:frac}
	Given a bounded domain $\Omega \subset \R^n$, $s \in (0,1)$, and $F \in C^{1}(\R)$, 
	let  
	$u \colon \R^n \to \R$ in $C(\Omega)$
	be a one-sided minimizer by above of the functional $\energyfrac$ in $\Omega$.
	
	Then, $u$ is a viscosity supersolution in $\Omega$.
\end{theorem}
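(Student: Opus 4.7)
The plan is to argue by contradiction. Suppose $u$ is a one-sided minimizer by above but fails to be a viscosity supersolution at some $x_0 \in \Omega$: there exist a neighborhood $N \subset \Omega$ of $x_0$ and a function $\varphi \in C^2(\overline{N})$ with $\varphi(x_0) = u(x_0)$ and $\varphi < u$ in $N \setminus \{x_0\}$, such that the extension $\overline{\varphi}$ in~\eqref{extension-bar} satisfies $\fraclaplacian \overline{\varphi}(x_0) < F'(\overline{\varphi}(x_0))$. The goal is to exhibit a competitor $v \geq u$ in $\Omega$ with $v \equiv u$ in $\Omega^c$ and $\energyfrac(v) < \energyfrac(u)$, contradicting Definition~\ref{def:one:sided}.

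The competitor $v$ will be the top leaf of a weak field obtained by sliding $\overline{\varphi}$ upwards. Fix $\rho_0 > 0$ with $\overline{B_{2\rho_0}(x_0)} \subset N$, pick a cutoff $\eta \in C_c^{\infty}(N)$ with $\eta \equiv 1$ on $B_{\rho_0}(x_0)$, and for $t \in [0, T]$ (with $T$ small, to be chosen) set
\[
\varphi^{t}(x) := \max\big\{\overline{\varphi}(x) + t\eta(x),\, u(x)\big\}, \qquad v := \varphi^{T}.
\]
Since $\overline{\varphi} \leq u$ on $\R^n$ with equality on $\R^n \setminus N$, one has $\varphi^{0}=u$, and for $T$ small enough the positivity set $\Omega_t = \{\varphi^{t} > u\}$ is contained in $B_{\rho_0}(x_0)$ for all $t \in (0, T]$, on which $\varphi^{t} = \overline{\varphi}+t$. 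In particular $\varphi^{t} \equiv u$ in $\Omega^c$ and $\partial_t \varphi^{t} \equiv 1$ on $I_x$, giving conditions~\ref{def:visc:field:1}--\ref{def:visc:field:4} of Definition~\ref{def:visc:field}. For the regularity condition~\ref{def:visc:field:5}, given $t \in (0, T)$ and $x \in \Omega_t$ I would take as $\psi$ the paraboloid
\[
\psi(y) := \overline{\varphi}(x) + t\eta(x) + \nabla(\overline{\varphi}+t\eta)(x) \cdot (y-x) - C_{0} |y-x|^{2},
\]
with $C_0$ large and uniform in $(t,x)$ in terms of $\|\overline{\varphi}\|_{C^2(\overline{B_{2\rho_0}(x_0)})}$, $\|\eta\|_{C^2}$, and $\|u\|_{L^\infty(\overline{\Omega})} + T$; a Taylor expansion then gives $\psi \leq \overline{\varphi}+t\eta \leq \varphi^{t}$ in a fixed neighborhood of $\overline{\Omega}$, with equality at $x$.

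With the weak field in hand, apply Theorem~\ref{thm:energy:comparison:frac} (on a slightly smaller domain $\Omega' \Subset \Omega$ containing $\Omega_T$, on which $u$ is continuous up to the boundary; this does not affect $\energyfrac(v)-\energyfrac(u)$ since $v \equiv u$ in $(\Omega')^c$) to get
\[
\energyfrac(v) \leq \energyfrac(u) + \int_{\Omega_T} \int_{u(x)}^{v(x)} \big(\fraclaplacian \varphi^{t}(x) - F'(\varphi^{t}(x))\big)\big|_{t = t(x,\lambda)} \d \lambda \d x.
\]
Since $\varphi^{t} \geq \overline{\varphi}+t\eta$ on $\R^n$ with equality at $x \in \Omega_t$, a pointwise comparison of the singular integrals yields $\fraclaplacian \varphi^{t}(x) \leq \fraclaplacian \overline{\varphi}(x) + t\, \fraclaplacian \eta(x)$ for every $x \in \Omega_t$. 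Using continuity of $\fraclaplacian \overline{\varphi}$ at $x_0$ (as $\overline{\varphi}$ is $C^2$ there and in $L^1_s$), boundedness of $\fraclaplacian \eta$, and the identity $\varphi^{t}(x) = \overline{\varphi}(x)+t$ on $\Omega_t$, the contradiction hypothesis $\fraclaplacian \overline{\varphi}(x_0) < F'(u(x_0))$ forces the integrand to be bounded above by a strictly negative constant on $\Omega_T$, once $T$ and $\rho_0$ are small. Since $\Omega_T$ contains a neighborhood of $x_0$ and hence has positive measure, the double integral is strictly negative, so $\energyfrac(v) < \energyfrac(u)$, contradicting Definition~\ref{def:one:sided}.

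The main obstacle will be verifying condition~\ref{def:visc:field:5} with constants uniform in $(t,x)$ and, related to this, turning the $C^{1,1}$-by-below structure of $\varphi^{t}$ into the pointwise inequality $\fraclaplacian \varphi^{t}(x) \leq \fraclaplacian(\overline{\varphi}+t\eta)(x)$ on $\Omega_t$, which is what ultimately produces the strict sign of the integrand fed into the energy comparison. Everything else is a consequence of the continuity of $\fraclaplacian \overline{\varphi}$ and $F'$ at $x_0$.
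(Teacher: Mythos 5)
Your proof is correct and follows essentially the same strategy as the paper's: argue by contradiction, slide the touching function upwards and truncate with $u$ to produce a weak field regular by below, bound $\fraclaplacian \varphi^{t}$ on the positivity set using that the leaves lie above the slid test function with equality at the point, and feed the resulting strictly negative integrand into Theorem~\ref{thm:energy:comparison:frac} to contradict one-sided minimality by above. The differences are only cosmetic: you perturb by $t\eta$ with a cutoff and verify condition~\ref{def:visc:field:5} via paraboloids (working on a slightly smaller domain so that $u$ is continuous up to the boundary), whereas the paper slides by the constant $t$ with $\tmax \leq \min_{\overline{N}\setminus B_{\delta}(x_0)}(u-\varphi)$, uses $\varphi + t$ itself as the touching function, and controls the far-field error through a near/far splitting of the truncated operator $\fraclaplacian_{\varepsilon}$.
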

\begin{proof}

	We argue by contradiction. 
	Suppose that $u$ is not a viscosity supersolution in~$\Omega$.
	Then there exist $x_0 \in \Omega$, a neighborhood $N \subset \Omega$ of $x_0$, and a function $\varphi \in C^2(\overline{N})$, with $\varphi(x_0) = u(x_0)$ and $\varphi(x) < u(x)$ 
	for all $x \in N \setminus \{x_0\}$,
	such that the extension $\overline{\varphi}$ defined by \eqref{extension-bar} satisfies
	\[
	(-\Delta)^s \overline{\varphi}(x_0) < F'(\overline{\varphi}(x_0)).
	\]
	
	We will now construct a function above $u$ which has less energy than $u$, thus violating the one-sided minimality by above. 
	The idea of the proof is to raise the function $\varphi$ to produce a local foliation whose leaves are strict subsolutions. The details go as follows.
	
	Recall the truncations introduced in the proof of Theorem~\ref{thm:energy:comparison:frac} above.
	Namely, for $\varepsilon > 0$ we let $K_{\varepsilon } = c_{n,s}|\cdot|^{-n-2s}  \mathds{1}_{\R^{n}\setminus B_{\varepsilon}}$ and $(-\Delta)^s_{\varepsilon} \varphi(x) = \int_{\R^n} (\varphi(x) - \varphi(y))K_{\varepsilon}(x-y) \d y$.
		
Since $(-\Delta)^s \overline{\varphi}(x_0) - F'(\overline{\varphi}(x_0)) =: -4 c_0 < 0$, by continuity of $F'$ and of the fractional Laplacian, 
there are some small $\delta > 0$ and $\varepsilon_0 > 0$ such that
$B_{2\delta}(x_0) \subset N$ and 
\[
(-\Delta)^s_{\varepsilon} \overline{\varphi}(x) - F'(\overline{\varphi}(x)) < -2 c_0,
\]
for all $x \in B_{2 \delta}(x_0)$ and $\varepsilon \in (0,\varepsilon_0)$.

	For $0\leq t \leq \tmax$, where $0 < \tmax \leq \min_{\overline{N} \setminus B_{\delta}(x_0)} (u-\varphi)$,
	we define the functions
	\[
	u^{t}(x) := 
	\left\{
	\begin{array}{ll}
		\max\{u(x),\varphi(x) + t\}& \text{ for } x \in N,\\
		u(x) & \text{ for } x \in \R^n \setminus N.\\
	\end{array}
	\right.
	\]
	Thanks to the choice $0 < t < \tmax \leq \min_{\overline{N} \setminus B_{\delta}(x_0)} (u-\varphi)$, 
	the functions $u^t$ are continuous (as required in condition \ref{def:visc:field:3} of weak fields).
	Notice that $\{u^t > u\} \subset B_{\delta}(x_0)$.

	It is clear that the family $\{u^{t}\}_{t \in [0,\tmax]}$ is a weak field for $u$ in $B_{\delta}(x_0)$ which is regular by below, in the sense of Definition~\ref{def:visc:field}.
	For this, notice for each $x \in \{u^t > u\}$, 
	the $C^2(\overline{N})$ function $\varphi + t $ touches $u^t$ by below in $B_{\delta}(x_0)$ at $x$.

	For $0 < t < \tmax$, $\varepsilon > 0$, and 
	$x \in \{u^t > u\}$,
	since $u^t(x) = \varphi(x) + t$
	we have
	\[
	\begin{split}
	(-\Delta)^s_{\varepsilon}\, u^{t}(x) &= \int_{\R^n} \left(\varphi(x) + t - u^t(y) \right) K_{\varepsilon}(x-y) \d y\\
	&= \int_{B_{\delta}(x)} \left(\varphi(x) + t - u^t(y) \right) K_{\varepsilon}(x-y) \d y \\
	& \hspace{3cm} + \int_{\R^n \setminus B_{\delta}(x)} \left(\varphi(x) + t - u^t(y) \right) K_{\varepsilon}(x-y) \d y\\
	&\leq \int_{B_{\delta}(x)} \left(\varphi(x)- \varphi(y) \right) K_{\varepsilon}(x-y) \d y \\
	& \hspace{3cm} + \int_{\R^n \setminus B_{\delta}(x)} \left(\varphi(x) - \overline{\varphi}(y) + t \right) K_{\varepsilon}(x-y) \d y,\\
	\end{split}	
	\]
	where in the last line we have used that $\varphi(y) + t \leq u^t(y)$ for  $ y \in B_{\delta}(x) \subset B_{2\delta}(x_0) \subset N$ 
	and $ \overline{\varphi}(y) \leq u^t(y)$ for $y \in \R^n$.
	Moreover, since $\overline{\varphi} = \varphi$ in $N$, it follows that 
	\[
	\begin{split}
	(-\Delta)^s_{\varepsilon}\, u^{t}(x) 
	&\leq \int_{\R^n} \left(\overline{\varphi}(x)- \overline{\varphi}(y) \right) K_{\varepsilon}(x-y) \d y  + t \int_{\R^n \setminus B_{\delta}(x)} K_{\varepsilon}(x-y) \d y\\
	&\leq (-\Delta)^s_{\varepsilon} \, \overline{\varphi}(x) +  \tmax c_{n,s} \int_{\R^n\setminus B_{\delta}(x)} |x-y|^{-n-2s} \d y\\
	& = (-\Delta)^s_{\varepsilon} \, \overline{\varphi}(x) +  \tmax c_{n,s} |\partial B_1| \frac{\delta^{-2s}}{2s}.
	\end{split}	
	\]
	From this inequality and the continuity of $F'$, taking a sufficiently small $\tmax$, we obtain
	\[
	(-\Delta)^s_{\varepsilon} u^{t}(x) - F'(u^{t}(x)) < -c_0,
	\]
	for all 
	$x \in B_{\delta}(x_0)$, 
	$\varepsilon \in (0,\varepsilon_0)$, and
	$t \in (0, \tmax)$ 
	such that $u^{t}(x) > u(x)$.
	
	Letting now $\varepsilon \downarrow 0$,
	by Theorem \ref{thm:energy:comparison:frac}, we conclude that
	\[
	\begin{split}
		\energyfrac(u^T)&\leq \energyfrac(u) + \int_{\Omega \cap \{u^T>u\}} \int_{u(x)}^{u^T(x)} \Big\{ (-\Delta)^s u^{t}(x) - F'(u^{t}(x))\Big\} \Big|_{t = t(x, \lambda)} \d \lambda \d x \\
		&\leq \energyfrac(u) -c_0 \big|\{(x,\lambda)\in \Omega\times \R : u(x)<\lambda < u^T(x)\}\big|\\
		&< \energyfrac(u),
	\end{split}
	\]
	which contradicts the minimality of $u$.
\end{proof}

\subsection{General nonlocal functionals} \label{subsection:viscosityGeneral}
\label{subsection:mixed}
We now extend the previous approach to the more general setting of mixed functionals
$\energytot$ of the form~\eqref{def:mixed}.
Here, 
properties~\ref{def:visc:field:1}-\ref{def:visc:field:4} from Definition~\ref{def:visc:field}
still yield the correct notion of weak field.
Under precise growth and regularity assumptions on the Lagrangians $\lag$ and $\lagloc$, 
the proofs below will be valid for sufficiently smooth weak fields.
The key point is to make sure that the mixed operator $\optot(u^{t})(x)$ is bounded by above, uniformly in $x$ and $t$.
In most applications, 
being $C^{1,1}$ by below
(property~\ref{def:visc:field:5} from Definition~\ref{def:visc:field} above)
suffices for the argument to go through.
Thus, in the next theorems,
by a weak field which is ``sufficiently regular for $\lagloc$ and $\lag$''
we mean a weak field satisfying the required regularity conditions.

Next, we prove the 
following energy comparison result in the presence of a sufficiently regular weak field.
It contains Theorem~\ref{thm:energy:comparison} in the Introduction
(and will be proven in a similar way to Theorem~\ref{thm:energy:comparison:frac} above).

\begin{theorem}
	\label{thm:energy:comparison_combined}
	Let $\lagloc = \lagloc(x,\lambda,q)$ be a function 
	satisfying 
	$\partial^2_{qq} \lagloc(x,\lambda,q)\geq 0$,
	and let~$\lag = \lag(x,y,a,b)$
	be a pairwise symmetric function 
	satisfying 
	$\partial^2_{ab} \lag(x,y,a,b) \leq 0$.
	
	Given a bounded domain $\Omega \subset \R^n$, let
	$u  \in C(\overline{\Omega})$.
	Assume that there exists $\{u^{t}\}_{t \in [0,\tmax]}$,
	a weak field for $u$ 
	in $\Omega$ (in the sense of Definition~\ref{def:visc:field}) which is sufficiently regular for~$\lagloc$ and $\lag$.
	
	Then, 
	if $\energytot(u) = \energyloc(u) + \energy(u) < +\infty$ (these functionals being defined in Section~\ref{section:compound}),
	we have
	\[
	\energytot(u^\tmax)\leq \energytot(u) + \int_{\Omega} \int_{u(x)}^{u^\tmax(x)} \optot\,(u^{t}(x)) \big|_{t = t(x, \lambda)} \d \lambda \d x,
	\]
	where $\optot = \oploc + \op$ is the Euler-Lagrange operator associated to $\energytot$.
\end{theorem}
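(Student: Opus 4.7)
The plan is to extend the proof strategy of Theorem~\ref{thm:energy:comparison:frac} to the mixed setting by combining the nonlocal calibration machinery of Section~\ref{section:nonlocal} with the classical Weierstrass calibration~\eqref{Intro_calib:loc:2}, both adapted to the weak field $\{\varphi^{t}\}_{t \in [0,\tmax]}$. Since this weak field interpolates between $u$ (at $t=0$) and $v$ (at $t=\tmax$), the natural move is to take $v$ as the reference leaf and evaluate the mixed calibration at $u$, exploiting property~\ref{def:calib:3}. Concretely, I would introduce
\[
\calibtot(u) := \int_{\Omega} \int_{v(x)}^{u(x)} \optot(\varphi^{t})(x)\big|_{t = t(x,\lambda)} \d \lambda \d x + (\text{exterior/boundary terms}) + \energytot(v),
\]
built as the sum of expression~\eqref{gen:calib:sym} and its local analogue derived from~\eqref{Intro_calib:loc:2}, with $u^{t_0}=v$ and $w=u$. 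Because $u = v$ a.e. in $\Omega^c$, and hence $u = v$ on $\partial \Omega$ by continuity of $u,v \in C(\overline{\Omega})$, both the exterior (nonlocal) and the boundary (local) terms vanish, giving
\[
\calibtot(u) = \energytot(v) - \int_{\Omega} \int_{u(x)}^{v(x)} \optot(\varphi^{t})(x)\big|_{t = t(x,\lambda)} \d \lambda \d x.
\]

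The theorem will then follow from the key inequality $\calibtot(u) \leq \energytot(u)$, which I would establish by decomposing it into $\calib(u) \leq \energy(u)$ and $\calibloc(u) \leq \energyloc(u)$. For the nonlocal part I would adapt Lemma~\ref{lemma:alt:expression} and Proposition~\ref{property:3} to the weak field: the sign argument of Proposition~\ref{property:3}, which relies on $\partial^2_{ab}\lag \leq 0$ together with $\partial_t \varphi^{t} \geq 0$, carries over directly, since it does not actually need strict monotonicity of the leaves. Where $\partial_t \varphi^{t} = 0$ on subsets on which $\varphi^{t}$ coincides with $u$, the cross integrand simply vanishes, and where $\varphi^{t} > u$ the usual ellipticity inequality gives the correct sign. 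The local bound $\calibloc(u) \leq \energyloc(u)$ follows analogously from the classical Weierstrass excess function argument, which requires only $\partial^2_{qq}\lagloc \geq 0$.

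The main obstacle will be the technical care needed to handle the weak field rather than a true field: the leaf-parameter function $t(x,\lambda)$ is only defined on $\{(x,\lambda) \in \Omega \times \R : u(x) < \lambda < v(x)\}$, and the leaves may coincide with $u$ on positive-measure subsets of $\Omega$, making the $t$-integrations in Lemma~\ref{lemma:alt:expression} potentially degenerate. The ``sufficiently regular for $\lagloc$ and $\lag$'' hypothesis must therefore be strong enough to guarantee, as in condition~\ref{def:visc:field:5} of Definition~\ref{def:visc:field}, that $\optot(\varphi^{t})(x)$ is bounded above uniformly in $t \in (0,\tmax)$ and $x \in \Omega_t$, so that a Fatou-type passage to the limit is available whenever one approximates by truncations (as done with $\fraclaplacian_\varepsilon$ in the fractional proof). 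Once these technical points are settled, combining $\calibtot(u) \leq \energytot(u)$ with the identity displayed above yields $\energytot(v) \leq \energytot(u) + \int_{\Omega} \int_{u(x)}^{v(x)} \optot(\varphi^{t})(x)|_{t = t(x,\lambda)} \d\lambda \d x$, as claimed.
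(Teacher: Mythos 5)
Your proposal is correct and follows essentially the same route as the paper: the paper's proof also takes the mixed calibration $\calibtot$ built around the reference leaf $v=\varphi^{\tmax}$, notes that the exterior/boundary contributions drop out since $u=v$ outside $\Omega$, establishes property~\ref{def:calib:3} (i.e. $\calibtot(u)\leq\energytot(u)$) in the weak-field setting via the two ellipticity conditions, and rearranges to obtain the stated inequality. Your additional remarks on the degenerate leaf-parameter function and the uniform upper bound on $\optot(\varphi^t)$ needed for the Fatou-type limit match the technical points the paper itself flags (tersely) by referring back to the proofs of Proposition~\ref{property:3} and Theorem~\ref{thm:energy:comparison:frac}.
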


\begin{proof}
	We consider the calibration functional constructed in Section~\ref{section:compound}, that is,
	\[
	\calibtot(w) = \energytot(u^\tmax) + \int_{\Omega} \int_{u^\tmax(x)}^{w(x)} \optot(u^{t})\big|_{t = t(x, \lambda)} \d \lambda \d x.
	\]
	Following the strategy there, one can show, by using the ellipticity conditions, that $\calibtot$ also satisfies property~\ref{def:calib:3} in the new framework of weak fields.\footnote{To prove this property for the nonlocal term, simply follow the analogous proof of Proposition~\ref{property:3} in Section~\ref{section:nonlocal}, replacing $\domain$ by $Q(\{u < u^\tmax\})$, similarly to the proof of Theorem~\ref{thm:energy:comparison:frac} above.}
	In particular,
	\[
	\energytot(u) \geq \calibtot(u)= \energytot(u^\tmax) + \int_{\Omega} \int_{u^\tmax(x)}^{u(x)} \optot(u^{t})\big|_{t = t(x, \lambda)} \d \lambda \d x,
	\]
	which yields the desired result.
\end{proof}

With this result at hand, we can easily show that one sided minimizers by above are viscosity supersolutions of the Euler-Lagrange equation.
Here it is clear how to adapt 
Definitions~\ref{def:viscosity} and~\ref{def:one:sided} 
to the case of mixed energy functionals.
The following result includes Theorem~\ref{thm:viscosity} in the Introduction (and is proven as Theorem~\ref{thm:viscosity:frac} above):
\begin{theorem}
	\label{thm:gen:visc:min}
	Let $\lagloc = \lagloc(x,\lambda,q)$ be a function satisfying 
	$\partial^2_{qq} \lagloc(x,\lambda,q)\geq 0$,
	and let
	$\lag = \lag(x,y,a,b)$
	be a pairwise symmetric function 
	satisfying 
	$\partial^2_{ab} \lag(x,y,a,b) \leq 0$.
	Let 
	$\Omega \subset \R^n$
	be a bounded domain and 
	let $u$ be a sufficiently regular one-sided minimizer by above of the functional $\energytot$.
	
	Then, the function u is a viscosity supersolution of the associated Euler-Lagrange equation $\optot(w) = 0$ in $\Omega$.
\end{theorem}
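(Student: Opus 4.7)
The plan is to argue exactly as in the proof of Theorem~\ref{thm:viscosity:frac}, replacing the operator $\fraclaplacian - F'$ by the mixed Euler--Lagrange operator $\optot = \oploc + \op$, and replacing Theorem~\ref{thm:energy:comparison:frac} by its mixed counterpart Theorem~\ref{thm:energy:comparison_combined}. So I would suppose by contradiction that $u$ is not a viscosity supersolution at some $x_0 \in \Omega$. Then there are a neighborhood $N \subset \Omega$ of $x_0$ and a test function $\varphi \in C^2(\overline{N})$ touching $u$ from below at $x_0$ such that the extension $\overline{\varphi}$ (equal to $\varphi$ in $N$ and to $u$ in $\R^n \setminus N$) satisfies $\optot\,\overline{\varphi}(x_0) < 0$. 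By continuity of $\optot$ on functions that are $C^2$ near $x_0$ and that behave like~$u$ at infinity, I can find $\delta>0$ and $c_0>0$ with $B_{2\delta}(x_0) \subset N$ and $\optot\,\overline{\varphi}(x) < -4c_0$ for every $x \in B_{2\delta}(x_0)$.

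Next, I would construct a weak field by sliding $\varphi$ upwards and cutting with $u$: for $\tmax \in (0, \min_{\overline{N} \setminus B_\delta(x_0)}(u-\varphi))$ small and $t \in [0,\tmax]$, set
\[
\varphi^t(x) = \max\{u(x), \varphi(x) + t\} \ \text{for } x \in N, \qquad \varphi^t(x) = u(x) \ \text{for } x \in \R^n \setminus N,
\]
and put $v = \varphi^{\tmax}$. The choice of $\tmax$ guarantees continuity of $(x,t) \mapsto \varphi^t(x)$, that $\{\varphi^t > u\} \subset B_\delta(x_0)$, and that each $\varphi^t$ is touched from below on the coincidence set by the $C^2$ function $\varphi + t$. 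This yields a weak field in the sense of Definition~\ref{def:visc:field}, and under the regularity assumptions hidden in the phrase ``sufficiently regular for $\lagloc$ and $\lag$'' it is regular enough for Theorem~\ref{thm:energy:comparison_combined} to apply.

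The crucial step is the pointwise estimate $\optot(\varphi^t)(x) < -c_0$ for every $x \in \{\varphi^t > u\}$ and every $t \in (0,\tmax)$, provided $\tmax$ is small. For the local part, at such a point $\varphi^t$ coincides with $\varphi + t$ in a neighborhood, so $\oploc(\varphi^t)(x) = \oploc(\varphi + t)(x) \to \oploc(\overline{\varphi})(x)$ as $t \downarrow 0$, uniformly in $x \in B_\delta(x_0)$, by continuity of $\oploc$ on $C^2$ functions. For the nonlocal part, I use the ellipticity $\partial^2_{ab}\lag \leq 0$: since $\varphi^t \geq \overline{\varphi}$ everywhere on $\R^n$ and $\partial_a \lag(x,y,a,b)$ is nonincreasing in $b$, at any $x$ in the coincidence set one has
\[
\op(\varphi^t)(x) \leq \int_{\R^n} \partial_a \lag\bigl(x,y,\overline{\varphi}(x)+t, \overline{\varphi}(y)\bigr) \d y,
\]
and the right-hand side converges to $\op(\overline{\varphi})(x)$ as $t \downarrow 0$, uniformly in $x \in B_\delta(x_0)$, by the assumed regularity of $\lag$. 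Adding these two contributions and using $\optot\,\overline{\varphi} < -4c_0$ in $B_{2\delta}(x_0)$, I obtain $\optot(\varphi^t)(x) < -c_0$ after possibly shrinking $\tmax$.

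Finally I would apply Theorem~\ref{thm:energy:comparison_combined} to $u$ and $v$, whose hypotheses are met since $v \geq u$ in $\Omega$, $v = u$ in $\Omega^c$, and $\{\varphi^t\}$ is the required weak field. This gives
\[
\energytot(v) \leq \energytot(u) + \int_{\Omega \cap \{v>u\}} \int_{u(x)}^{v(x)} \optot(\varphi^t)(x)\big|_{t=t(x,\lambda)} \d \lambda \d x \leq \energytot(u) - c_0\, \bigl|\{(x,\lambda) \in \Omega \times \R : u(x) < \lambda < v(x)\}\bigr| < \energytot(u),
\]
contradicting one-sided minimality by above. The main obstacle I expect is justifying the uniform upper bound on $\optot(\varphi^t)$: in the fractional case one truncates by $\fraclaplacian_\varepsilon$ and uses an explicit tail estimate, but here one must instead rely on the monotonicity provided by $\partial^2_{ab}\lag \leq 0$ together with whichever continuity and growth hypotheses on $\lag$ and on $u$ are packaged into ``sufficiently regular'', which is why the theorem is only formal in this general setting.
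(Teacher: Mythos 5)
Your proposal is correct and follows essentially the same route as the paper: the paper's proof of Theorem~\ref{thm:gen:visc:min} is precisely to repeat the argument of Theorem~\ref{thm:viscosity:frac} (slide $\varphi$ upwards, truncate with $u$ to get a weak field, and invoke Theorem~\ref{thm:energy:comparison_combined} to contradict one-sided minimality), which is exactly what you do. Your extra details---using $\partial^2_{ab}\lag\leq 0$ to bound $\op(\varphi^t)$ via $\varphi^t\geq\overline{\varphi}$ and absorbing the continuity-in-$t$ issues into the ``sufficiently regular'' hypothesis---are a reasonable fleshing-out of the paper's sketch at the same formal level.
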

\begin{proof}
	Proceeding as in the proof of Theorem \ref{thm:viscosity:frac},
	we slide the touching function $\varphi$ upwards and take the maximum with $u$ to obtain a weak field.
	Applying Theorem \ref{thm:energy:comparison_combined}, we see that $\varphi$ cannot be a strict subsolution, since otherwise the leaves of the weak field would have smaller energy than the minimizer (this is shown as in the proof of Theorem~\ref{thm:viscosity:frac}).
\end{proof}

As a consequence of Theorem~\ref{thm:gen:visc:min}, we can finally give the proof of Theorem \ref{thm:viscosity} in the Introduction.

\begin{proof}[Proof of Theorem \ref{thm:viscosity}]
	If $u$ is a minimizer, then $u$ and $-u$ are one-sided minimizers by above of the functionals $\energy(\cdot)$ and $\energy(-\cdot)$, respectively.
	By Theorem \ref{thm:gen:visc:min}, the function $u$ is both a viscosity supersolution and subsolution.
	In particular, $u$ is a viscosity solution.
\end{proof}

\begin{remark}
	\label{remark:barron}
	For convex Lagrangians, there is a more direct proof of Theorem \ref{thm:gen:visc:min} which does not use calibrations.
	In addition to the ellipticity $\partial_{ab} \lag (x, y, a, b) \leq 0$,
	one needs to further assume that the functions $(\lambda,q) \mapsto \lagloc(x,\lambda,q)$ and $(a,b) \mapsto \lag(x, y, a, b)$ are both convex.
	The following argument is due to Barron and Jensen~\cite{BarronJensen}, who applied it to local functionals.
	Proceeding by contradiction as above,
	we consider the same weak field and pick one leaf $u^{t_0}$ with $t_{0} > 0$.
	By continuity and the ellipticity assumption, this function will satisfy $\optot (u^{t_{0}})(x) < 0$ whenever $u^{t_{0}}(x)>u(x)$.
	Then, applying integration by parts in the local term, symmetrizing in the nonlocal one, and using the convexity assumptions, we obtain
	\begin{align*}
		0&<\int_\Omega (u(x)-u^{t_{0}}(x)) \, \optot(u^{t_{0}})(x) \d x \\
		&\leq \int_{\Omega} \Big( \lagloc(x,u(x),\nabla u(x)) - \lagloc(x,u^{t_{0}}(x),\nabla u^{t_{0}}(x)) \Big)\d x \\
		&\ \ \ \ +\frac{1}{2} \iint_{Q(\Omega)} \Big( \lag(x,y,u(x),u(y)) - \lag(x,y,u^{t_0}(x),u^{t_0}(y)) \Big) \d x \d y \\
		&= 
		\energytot(u) - \energytot (u^{t_0}).
	\end{align*}
	This contradicts the one-sided minimality by below.
\end{remark}

\begin{remark}
	\label{remark:trick}
	The calibration approach allows us to prove that one-sided minimizers by above are viscosity supersolutions, 
	but a priori says nothing about those supersolutions which are not necessarily one-sided minimizers.
	Nevertheless, the strategy can be adapted to treat some nonlocal functionals whose main part is convex, for instance.
	The idea consists on building an auxiliary functional for which the weak supersolution is a one-sided minimizer, by interpreting lower order terms of the original functional as new linear terms.
	We briefly discuss the semilinear case for the sake of clarity.
	
	Let $u$ be a weak supersolution, not necessarily a one-sided minimizer, of the equation $(-\Delta)^s v = f(v)$ in $\Omega$. That is, $u$ satisfies
	$$ \frac{c_{n,s}}{2} \iint_{Q(\Omega)}  \frac{(u(x)-u(y))(\varphi(x)-\varphi(y)))}{|x-y|^{n+2s}} \d x \d y \geq \int_\Omega f(u(x))\,\varphi(x)\d x,$$
	for all $\varphi\in C^\infty_c(\Omega)$ such that $\varphi\geq 0$ in $\Omega$. 
	Then, one can check that $u$ is also a weak supersolution of the linear equation $(-\Delta)^s v = g$ in $\Omega$, with $g(x) := f(u(x))$. 
	In particular, $u$ is a one-sided minimizer by above of the auxiliary convex energy functional
	$$ \widetilde{\mathcal{E}}(w) = \frac{c_{n,s}}{4} \iint_{Q(\Omega)}  \frac{|w(x)-w(y)|^2}{|x-y|^{n+2s}}-\int_\Omega g(x) \, w(x). $$
	Applying Theorem \ref{thm:gen:visc:min}, we deduce that $u$ is a viscosity supersolution of the linear equation $(-\Delta)^s v = g$. 
	By definition of $g$, one clearly concludes that $u$ is a viscosity supersolution of the original semilinear equation.
\end{remark}

\appendix

\color{black}

\section{Minimality via the strong maximum principle}
\label{section:comparison}

In this appendix we explain how to prove minimality for a function embedded in a field of extremals via a strong comparison principle instead of via a calibration.
The proof, which is well known for local equations and for the fractional Laplacian, is simpler than building a calibration but requires (contrary to our calibration proof) an existence and regularity result for minimizers.
Such result will not be available for many nonlocal energy functionals.
This fact makes the calibration technique a stronger tool.

As in Section~\ref{section:nonlocal}, we let $\lag(x, y, a, b)$ be a nonlocal Lagrangian and the energy functional 
\[ \energy(w) = \dfrac{1}{2}\iint_{\domain} \lag(x, y, w(x), w(y)) \d x \d y. \]
As in the paper, we assume that $\lag$ is pairwise symmetric, that is, 
\[ \lag(y, x,b, a)= \lag(x, y, a, b) \quad \text{ for all } (x, y)\in \domain \text{ and } (a, b) \in \R^2.\]
The Euler-Lagrange operator associated to $\energy$ is given in terms of the integral
\[ 
\op(w)(x) = \int_{\R^{n}} \partial_{a} \lag(x, y, w(x), w(y)) \d y;
\]
see~\eqref{first:var}.

A sufficient condition for the operator $\op$ to satisfy a strong comparison principle is the strict ellipticity condition $\partial^2_{a b} \lag < 0$.
Indeed,
given two regular functions $u$ and $v$ defined in $\R^{n}$, if $u$ touches $v$ from below at some point $x_0$,  then the monotonicity of $\partial_{a} \lag$ leads to the inequality $ \op(u)(x_0) \geq \op(v)(x_0)$.
To see this, one must simply integrate $\partial_{a}\lag(x,y, u(x_0), u(y)) \geq \partial_{a}\lag(x,y, u(x_0), v(y))$ with respect to $y$ and use that $u(x_0) = v(x_0)$.  
Moreover, when 
$u \not\equiv v$
we have the strict inequality $\op(u)(x_0) > \op(v)(x_0)$.

We now prove the minimality result.
Let $\{u^t\}_{t \in I}$ be a field of extremals in $\Omega$ and suppose, arguing by contradiction, that $u^{t_0}$ with $t_0 \in I$ is not a minimizer.
Let $v$ be a minimizer of $\energy$ in the set of functions with ${\rm graph }\, v \subset \region = \{(x, \lambda) \in \R^{n}\times \R \,\colon\, \lambda = u^t(x) \text{ for some } t \in I\}$ satisfying the exterior condition $v = u^{t_0}$ in $\Omega^c$.
Assume further that $v$ is a continuous function.
In particular, $v \not\equiv u^{t_0}$ and by the monotonicity with respect to the leaf-parameter, there is a first leaf $u^{t_1}$ touching $v$ 
either from above or from below at an interior point $x_0 \in \Omega$ (since $v \equiv u^{t_0}$ outside $\Omega$).
In case it is from above (otherwise the argument is similar), the strong comparison principle now gives $0 = \op(v)(x_0) > \op(u^{t_1})(x_0) = 0$, which is a contradiction. Thus $u^{t_0} \equiv v$ is a minimizer. 

Note that this argument gives more than minimality. It establishes the uniqueness of the minimizer (and even of the extremal) with the given exterior condition.
It is also clear that the same argument works for fields made of super and subsolutions, that is, fields such that $\op(u^t) \geq 0$ for $t \geq t_0$ and $\op(u^t) \leq 0$ for $t \leq t_0$ in $\Omega$.

\section{The calibration for the nonlocal total variation}
\label{section:total:variation}

In this appendix
we relate our functional setting
to the geometric calibrations for the nonlocal perimeter appearing in the works of the first author~\cite{Cabre-Calibration} and of Pagliari~\cite{Pagliari}.
This is achieved through the \emph{nonlocal total variation} functional,
which amounts to
the integral of the nonlocal perimeters of the levels sets of a function.

Let us recall that, given an even kernel $K:\R^n\setminus \{0\} \to [0,+\infty)$, the $K$-nonlocal total variation of a function $w \colon \R^n \to \R$ is defined by
\begin{equation*}
\totalvar(w) := \frac{1}{2} \int \! \! \! \! \int_{Q(\Omega)} |w(x)-w(y)| \, K(x-y) \, \d x \d y,
\end{equation*} 
where $\Omega\subset \R^n$ is a bounded domain.
In particular, it is an energy functional of the form~\eqref{Intro_gen:energy} with Lagrangian
\[
\lag(x,y,a,b) = |a-b|\,K(x-y).
\]
Note that $\lag$ is elliptic and hence covered by our extremal field theory.\footnote{Notice that even though the Lagrangian $\lag$ is not $C^2$, it is convex in the $(a-b)$-variable, which suffices for our theory to apply.}

There is a strong connection between the nonlocal total variation and nonlocal minimal surfaces.
Here, we briefly explain some of the relevant ideas;
for more details in the particular case when $K(z) = |z|^{-n-s}$, we refer the reader to the work~\cite{BucurDipierroLombardiniValdinoci}.

Given a sufficiently regular set $E \subset \R^n$, its $K$-nonlocal perimeter is
\[
\perim(E) := \frac{1}{2} \int \! \! \! \! \int_{Q(\Omega)} |\mathds{1}_E(x) - \mathds{1}_E(y)| \, K(x-y) \, \d x \d y,
\]
and $E$ is called a $K$-nonlocal minimal surface if the first variation of $\perim$ at $E$ vanishes.
Notice that $\perim(E) = \totalvar( \mathds{1}_{E})$. 
It is well-known that the sublevel sets of minimizers of $\totalvar$ are $K$-nonlocal minimal surfaces.\footnote{Critical points of $\totalvar$ satisfy the equation $\mathcal{L}_{\rm NTV}(u)(x) := \int_{\R^n} {\rm sign}(u(x) - u(y)) K(x-y) \d y = 0$ for $x \in \Omega$. When $u(x)$ is a regular value, we have the relation $\mathcal{L}_{\rm NTV}(u)(x)= -H[\{u < u(x)\}](x)$, where $H[E] (x) = \int_{\R^n} \left( \mathds{1}_{E^c} (y)- \mathds{1}_{E} (y)\right) K(x-y) \d y$ denotes the nonlocal mean curvature of the set $E \subset \R^n$ at $x \in \partial E$. It follows that the sublevel sets are nonlocal minimal surfaces.}
Moreover, 
one can recover 
the nonlocal total variation $\totalvar$ 
of any function $w$ in terms of the nonlocal perimeter $\perim$ of its sublevel sets. 
Namely, we have the following nonlocal coarea formula~\cite{CintiSerraValdinoci}:
\begin{align} \label{coarea}
	\totalvar(w) = \int_{\R} \perim(\{w < \lambda\}) \d \lambda.
\end{align}

In~\cite{Pagliari},
Pagliari studied minimality properties of the nonlocal total variation $\totalvar$ when acting on functions taking values in the interval $[0,1]$.
He showed that characteristic functions of halfspaces minimize $\totalvar$ (among characteristic functions)
by constructing a calibration.
On the other hand, Cabr\'{e}~\cite{Cabre-Calibration} gave a calibration (recalled in~\eqref{perimcalib} below) for the $K$-nonlocal perimeter $\perim$ and an arbitrary set $E$
whenever it is embedded in a family of nonlocal minimal surfaces.\footnote{Notice that this covers the case of the half-space, simply by sliding it in the normal direction.}
Thus, the author extended the classical extremal field theory to nonlocal minimal surfaces.
Our present work is in the spirit of this second result but applied to the nonlocal total variation functional considered in~\cite{Pagliari}.
In particular, Theorem~\ref{thm:nonlocal:calibration} provides a calibration for the nonlocal total variation $\totalvar$ and an arbitrary function whenever it is embedded in a field of extremals.

Let us recall the calibration for the perimeter $\perim$ obtained in~\cite{Cabre-Calibration} by the first author.
Given a smooth function $\phi \colon \R^n \to \R$, for each $t \in \R$, we consider the superlevel sets $E^t = \{\phi(x) > t\}$.
In \cite{Cabre-Calibration} (see also \cite[Section~2]{CabreErnetaFelipeNavarro-Calibration1}), 
under the assumption that $E^t$ are nonlocal minimal surfaces,
it was shown that the functional
\begin{equation}
\label{perimcalib}
\calibper(F) = \frac{1}{2}\iint_{\domain} 
\text{\rm sign}
\left( \phi(x) - \phi(y) \right) \left(\mathds{1}_{F}(x) - \mathds{1}_{F}(y) \right) K(x-y) \d x \d y,
\end{equation}
is a calibration for $\perim$ and each $E^{t_0}$, $t_0 \in I$.

Next, we show that the analogue of the nonlocal coarea formula~\eqref{coarea} holds for the calibration functional.
Namely, the calibration for the nonlocal total variation functional constructed in the present paper can be written in terms of the calibration~\eqref{perimcalib} for the nonlocal perimeter of each sublevel set.
We point out that all identities in Proposition~\ref{prop:totalvar} hold for arbitrary fields $\{u^{t}\}_{t \in I}$, that is, the leaves $u^{t}$ are not necessarily extremals of $\totalvar$.

\begin{proposition}
\label{prop:totalvar}
Let $\{u^{t}\}_{t \in I}$ be a field in $\R^n$.
Then, the 
functional $\calibtotalvar$ associated to $\totalvar$ given by Theorem~\ref{thm:nonlocal:calibration}  can be written as
	\[
	\calibtotalvar(w) = \frac{1}{2} \int \! \! \! \! \int_{Q(\Omega)} \int_{w(y)}^{w(x)} \ \text{\rm sign}\left(u^{t(x,\lambda)}(x)-u^{t(x,\lambda)}(y)\right) \d \lambda \ K(x-y) \, \d x \d y.
	\]
	Moreover, the functional $\calibtotalvar$ can also be expressed as
\[
\calibtotalvar(w) = \int_\R 
\calibperla(\{w < \lambda\})
 \d \lambda,
\]
where $\calibperla$ is the calibration functional for the $K$-nonlocal perimeter $\perim$ in~\eqref{perimcalib}
constructed via 
the foliation given by the sublevel sets $ \{u^t < \lambda\}_{t \in I}$.
\end{proposition}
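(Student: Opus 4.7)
The plan is to prove both identities by first deriving the first one directly from Theorem~\ref{thm:nonlocal:calibration}, and then obtaining the second as a consequence via a Fubini argument connecting to the calibration~\eqref{perimcalib}. Setting $G_{\rm N}(x,y,a,b) = |a-b|K(x-y)$ gives $\partial_a G_{\rm N}(x,y,a,b) = \mathrm{sign}(a-b) K(x-y)$, so Theorem~\ref{thm:nonlocal:calibration} reads
\[
\calibtotalvar(w) = \iint_{Q(\Omega)} K(x-y) \int_{u^{t_0}(x)}^{w(x)} S(x,y,\lambda) \d\lambda \d x \d y + \totalvar(u^{t_0}),
\]
where I write $S(x,y,\lambda) := \mathrm{sign}(u^{t(x,\lambda)}(x) - u^{t(x,\lambda)}(y))$ for brevity. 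A preliminary observation I will use repeatedly is that, thanks to the monotonicity of each leaf in $t$, one has $S(x,y,\lambda) = \mathrm{sign}(\lambda - u^{t(x,\lambda)}(y)) = -\mathrm{sign}(t(x,\lambda) - t(y,\lambda))$, which immediately gives the antisymmetry $S(y,x,\lambda) = -S(x,y,\lambda)$.

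For the first identity, I will swap $x \leftrightarrow y$ in the double integral (using that $K$ is even and $Q(\Omega)$ is symmetric) to get a second expression for the same integral, and then average the two. Combined with the elementary splitting $\int_a^b - \int_c^d = \int_d^b - \int_c^a$ applied to $a = u^{t_0}(x)$, $b = w(x)$, $c = u^{t_0}(y)$, $d = w(y)$, this yields
\[
2\bigl(\calibtotalvar(w) - \totalvar(u^{t_0})\bigr) = \iint_{Q(\Omega)} K(x-y) \int_{w(y)}^{w(x)} S(x,y,\lambda) \d\lambda \d x \d y - \iint_{Q(\Omega)} K(x-y) \int_{u^{t_0}(y)}^{u^{t_0}(x)} S(x,y,\lambda) \d\lambda \d x \d y.
\]
The first integral on the right is exactly $2$ times the first claimed expression, so everything reduces to proving the key identity
\[
\int_{u^{t_0}(y)}^{u^{t_0}(x)} S(x,y,\lambda) \d\lambda = |u^{t_0}(x) - u^{t_0}(y)|,
\]
which, upon integration against $K(x-y)$, produces the $2\,\totalvar(u^{t_0})$ that cancels the additive constant. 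I expect this to be the main obstacle, since the sign $\mathrm{sign}(u^s(x) - u^s(y))$ is in general \emph{not} constant as $s$ varies. The point, however, is that when $\lambda$ lies between $u^{t_0}(y)$ and $u^{t_0}(x)$, say in the case $u^{t_0}(x) > u^{t_0}(y)$, the corresponding parameter $t(x,\lambda)$ lies in $[t(x,u^{t_0}(y)), t_0]$, so by monotonicity of $u^s(y)$ in $s$ we have $u^{t(x,\lambda)}(y) \le u^{t_0}(y) \le \lambda$, forcing $S(x,y,\lambda) = 1$ on the whole interval of integration. The symmetric argument handles the other case.

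For the second identity, I will apply Fubini to $\int_\R \calibperla(\{w<\lambda\}) \d\lambda$ and use the elementary layer-cake identity
\[
\int_\R f(\lambda)\bigl(\mathds{1}_{\{w<\lambda\}}(x) - \mathds{1}_{\{w<\lambda\}}(y)\bigr) \d\lambda = -\int_{w(y)}^{w(x)} f(\lambda) \d\lambda.
\]
The remaining ingredient is to identify the potential appearing in~\eqref{perimcalib}: since $\{u^s < \lambda\}$ is decreasing in $s$ with $\{u^s < \lambda\} = \{x : t(x,\lambda) > s\}$, the choice $\phi_\lambda(x) := t(x,\lambda)$ is the one that makes the foliation into the superlevel sets of $\phi_\lambda$. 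Then $\mathrm{sign}(\phi_\lambda(x) - \phi_\lambda(y)) = \mathrm{sign}(t(x,\lambda) - t(y,\lambda)) = -S(x,y,\lambda)$, and the two sign reversals cancel to give exactly the first expression proved in the previous step, concluding the proof.
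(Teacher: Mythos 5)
Your proposal is correct, and its overall structure mirrors the paper's: substitute $\lag(x,y,a,b)=|a-b|K(x-y)$ into the calibration from Theorem~\ref{thm:nonlocal:calibration}, symmetrize in $x\leftrightarrow y$ using the antisymmetry of the sign factor, split the inner integral as $\int_{u^{t_0}(x)}^{w(x)}-\int_{u^{t_0}(y)}^{w(y)}=\int_{w(y)}^{w(x)}-\int_{u^{t_0}(y)}^{u^{t_0}(x)}$, and then pass to the second identity by Fubini and the layer-cake formula with $\phi^\lambda(x)=t(x,\lambda)$. The one place your argument diverges is the identification of the subtracted term with $2\,\totalvar(u^{t_0})$. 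The paper goes through the nonlocal coarea formula \eqref{coarea} together with the pointwise identity $|\mathds{1}_{\{u^{t_0}<\lambda\}}(x)-\mathds{1}_{\{u^{t_0}<\lambda\}}(y)|=\mathrm{sign}(t(x,\lambda)-t(y,\lambda))\big(\mathds{1}_{\{u^{t_0}<\lambda\}}(x)-\mathds{1}_{\{u^{t_0}<\lambda\}}(y)\big)$, whereas you instead evaluate $\int_{u^{t_0}(y)}^{u^{t_0}(x)} S(x,y,\lambda)\,\d\lambda$ directly, observing via monotonicity of the field that $S$ is identically $+1$ (resp.\ $-1$) a.e.\ on that interval, so the integral equals $|u^{t_0}(x)-u^{t_0}(y)|$. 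Both routes are valid and of comparable length; your argument is somewhat more self-contained, avoiding an appeal to the coarea formula, while the paper's emphasizes the parallel layer-cake structure that also drives the proof of the second identity.
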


\begin{remark}
Before we succeeded in constructing a calibration for general functionals (even for the quadratic one in~\cite{CabreErnetaFelipeNavarro-Calibration1}),
we were able to build one
 for the nonlocal total variation~$\totalvar$.
For this, we considered the second identity in Proposition~\ref{prop:totalvar} as our definition of the calibration.
This idea was motivated by the coarea formula~\eqref{coarea}.
It is quite remarkable that our general construction in Theorem~\ref{thm:nonlocal:calibration} (found by completely different means)
recovers this natural calibration.
\end{remark}

\begin{remark}
We note that the sublevel sets $\{u^{t} < \lambda\}_{t \in I}$ appearing in the statement of Proposition~\ref{prop:totalvar} 
can be written as superlevel sets $E^{t}_{\lambda}:=\{\phi^{\lambda} > t\}_{t \in I}$ for some function~$\phi^{\lambda}$,
consistently with the notation for $\calibper$ in \eqref{perimcalib}.
Indeed, 
by monotonicity, we have
$u^t(x) < \lambda = u^{t(x,\lambda)}(x)$ if and only if $t < t(x, \lambda)$,
and hence we can take $\phi^{\lambda}(x) := t(x, \lambda)$.
That is, we have
\[
E^{t}_{\lambda} = \{x \in \R^n \colon t(x, \lambda) > t\} =\{u^{t} < \lambda\}.
\]
Moreover, 
again by the monotonicity of the field $\{u^t\}_{t\in I}$, 
for each $\lambda \in \R$, the level surfaces
$\partial E_{\lambda}^{t} = \partial \{u^{t} < \lambda\}$
give a foliation of a certain subset of $\R^n$.\footnote{We say that $\{\partial E^t\}_{t \in I}$ foliates a region $\mathcal{R}\subset \R^n$ if for each $x \in \mathcal{R}$, there is exactly one $t = t(x) \in I$ such that $x \in \partial E^t$. Following ideas from~\cite{Cabre-Calibration} and our previous work~\cite{CabreErnetaFelipeNavarro-Calibration1}, assuming $F = E^{t_0}$ outside $\Omega$, it is not hard to show that the calibration for the perimeter $\calibper$ may be written as \[ \calibper(F) = \perim(E^{t_0}) + \int_{\Omega} \left(\mathds{1}_{F \setminus E^{t_0}}(y) -\mathds{1}_{E^{t_0} \setminus F}(y) \right) H[E^{t}](x) \big|_{t = t(x)} \d x.\] In particular, when $\{\partial E^{t} \cap \Omega\}_{t \in I}$ foliates a subset $\mathcal{R} \subset \Omega$, the calibration properties hold for competitors $F$ with $F \triangle E^{t_0} := (F \setminus E^{t_0})\cup (E^{t_0} \setminus F) \subset \mathcal{R}$.} 
As previously discussed, since each $u^t$ is an extremal of the nonlocal total variation, the associated sublevel sets $E^{t}_{\lambda}$ are $K$-nonlocal minimal surfaces.
Therefore, the foliation and extremality properties of the family $\{u^{t}\}_{t \in I}$ are transferred to the sublevel sets $\{E^{t}_{\lambda}\}_{t \in I}$.
\end{remark}

\begin{proof}[Proof of Proposition~\ref{prop:totalvar}]
Let $t_0 \in I$.
First, letting $\lag(x,y,a,b) = |a-b|\,K(x-y)$, the calibration functional in Theorem~\ref{thm:nonlocal:calibration} associated to 
$\totalvar$
is
\begin{equation}
\label{miz0}
\calibtotalvar(w) = \int \! \! \! \! \int_{Q(\Omega)} \d x \d y \int_{u^{t_0}(x)}^{w(x)} \d \lambda \ \text{\rm sign}\left(u^{t(x,\lambda)}(x)-u^{t(x,\lambda)}(y)\right) K(x-y) + \totalvar(u^{t_0}).
\end{equation}

It is easy to check that the sign factor in \eqref{miz0} can be written as
\[
\text{\rm sign}\left(u^{t(x,\lambda)}(x)-u^{t(x,\lambda)}(y)\right) = \text{\rm sign}\left(t(y,\lambda)-t(x,\lambda)\right),
\]
for all $x$ and $y$ in $\R^n$ and $\lambda \in \R$.
Symmetrizing 
the first term in the right-hand side of~\eqref{miz0}
in the variables $x$ and $y$, 
and then using that 
\[
\int_{u^{t_0}(x)}^{w(x)} \cdot \d \lambda - \int_{u^{t_0}(y)}^{w(y)} \cdot \d \lambda = \int_{w(y)}^{w(x)} \cdot \d \lambda - \int_{u^{t_0}(y)}^{u^{t_0}(x)}  \cdot \d \lambda,
\]
we see that
\begin{equation}
\label{miz1}
\begin{split}
& \int \! \! \! \! \int_{Q(\Omega)} \d x \d y \int_{u^{t_0}(x)}^{w(x)} \d \lambda \ \text{\rm sign}\left( u^{t(x,\lambda)}(x)-u^{t(x,\lambda)}(y) \right) \ K(x-y) \\
		& \quad  \quad  \quad = \frac{1}{2} \int \! \! \! \! \int_{Q(\Omega)} \d x \d y \int_{w(y)}^{w(x)} \d \lambda \ \text{\rm sign}\left(t(y,\lambda)-t(x,\lambda)\right) \ K(x-y) \\
		& \quad \quad  \quad \quad\quad \quad  -\frac{1}{2} \int \! \! \! \! \int_{Q(\Omega)} \d x \d y \int_{u^{t_0}(y)}^{u^{t_0}(x)} \d \lambda \ \text{\rm sign}\left(t(y,\lambda)-t(x,\lambda)\right) \ K(x-y).
\end{split}
\end{equation}
On the other hand, by the nonlocal coarea formula \eqref{coarea} and the simple identity
\[
|\mathds{1}_{\{u^{t_0}<\lambda\}}(x) - \mathds{1}_{\{u^{t_0}<\lambda\}}(y)| = 
\text{\rm sign}\left(t(x,\lambda)-t(y,\lambda)\right)\left(\mathds{1}_{\{u^{t_0} < \lambda\}}(x) - \mathds{1}_{\{u^{t_0} <\lambda\}}(y)\right),
\]
it is not hard to show that
\begin{equation}
\label{miz2}
\begin{split}
\totalvar(u^{t_0})  &= \frac{1}{2} \int \! \! \! \! \int_{Q(\Omega)} \d x \d y \int_{\R} \d \lambda \ |\mathds{1}_{\{u^{t_0}<\lambda\}}(x) - \mathds{1}_{\{u^{t_0}<\lambda\}}(y)|  K(x-y) \\
&=\frac{1}{2} \int \! \! \! \! \int_{Q(\Omega)} \d x \d y \int_{u^{t_0}(y)}^{u^{t_0}(x)} \d \lambda \ \text{\rm sign}\left(t(y,\lambda)-t(x,\lambda)\right) K(x-y).
\end{split}
\end{equation}
Combining \eqref{miz1} and \eqref{miz2}, 
from \eqref{miz0} we deduce
\[
\begin{split}
		\calibtotalvar(w) &= \frac{1}{2} \int \! \! \! \! \int_{Q(\Omega)} \d x \d y \int_{w(y)}^{w(x)} \d \lambda \ \text{\rm sign}\left(t(y,\lambda)-t(x,\lambda)\right) \ K(x-y)\\
		&=\frac{1}{2} \int \! \! \! \! \int_{Q(\Omega)} \d x \d y \int_{w(y)}^{w(x)} \d \lambda \ \text{\rm sign}\left(u^{t(x,\lambda)}(x)-u^{t(x,\lambda)}(y)\right) \ K(x-y),
\end{split}
\]
which is the first claim of the proposition.

Finally, the last expression for $\calibtotalvar$
can also be written as
\[
\begin{split}
		\calibtotalvar(w) &= 
		\frac{1}{2} 
		\iint_{\domain}\! \! \! \! \! \! \d x \d y \int_{\R} \d \lambda \ \text{\rm sign}\left(t(x,\lambda)-t(y,\lambda)\right) (\mathds{1}_{\{w< \lambda\}}(x) - \mathds{1}_{\{w< \lambda\}}(y)) K(x-y).
\end{split}
\]
Changing the order of integration, to finish the proof it remains to show that
\[
\calibperla(\{w < \lambda\}) = \frac{1}{2} \iint_{\domain} \!\!\! \text{\rm sign}\left(t(x,\lambda)-t(y,\lambda)\right) (\mathds{1}_{\{w< \lambda\}}(x) - \mathds{1}_{\{w< \lambda\}}(y)) K(x-y) \d x \d y.
\]
But this is precisely the calibration $\calibper$ in \eqref{perimcalib} with $\phi(x) = \phi^{\lambda}(x) =  t(x, \lambda)$. 
This yields the claim.
\end{proof}

\section*{Acknowledgments}

The authors thank Joaquim Serra for suggesting the topic of viscosity solutions.
They also wish to thank the anonymous referee for their careful reading of the manuscript, for their useful suggestions, and for bringing to our attention reference~\cite{BucurDipierroLombardiniValdinoci}.

\end{document}